\documentclass[12pt,twoside,english]{article}
\usepackage{babel,amssymb,amsthm}
\usepackage{graphicx}
\usepackage{amsmath}
\usepackage{bm}
\usepackage{float, framed}
\usepackage[utf8]{inputenc}
\usepackage[show]{ed}
\usepackage{wrapfig}
\usepackage{stmaryrd}

\bibliographystyle{plain}

\evensidemargin 0cm \oddsidemargin 0cm \setlength{\topmargin}{-1cm}
\setlength{\textheight}{23truecm} \textwidth 16truecm

\newcommand{\punto}{\,\cdot\,}

\newcommand{\triple}[2]{|\!|\!|#1|\!|\!|_{#2}}

\newcommand{\bs}{\boldsymbol}
\newcommand{\bff}{\mathbf}


\newcommand{\R}{\mathbb R}
\newcommand{\CC}{{\mathcal C}}
\newcommand{\EE}{{\mathcal E}}
\newcommand{\KK}{{\boldsymbol\kappa}}

\newcommand{\eps}[1]{\bs\varepsilon(#1)}
\newcommand{\uu}{{\mathbf u}}
\newcommand{\ww}{{\mathbf w}}
\newcommand{\pp}{{\mathbf p}}
\newcommand{\dd}{{\mathbf d}}

\newcommand{\ddiv}{{\mathrm{div}\,}}
\newcommand{\nn}{{\boldsymbol\nu}}

\newcommand{\HH}{{\mathbf H}}
\newcommand{\LL}{{\mathbf L}}

\newcommand{\matR}{\mathbb R^{d\times d}}
\newcommand{\symm}{{\mathrm{sym}}}

\newtheorem{proposition}{Proposition}[section]
\newtheorem{corollary}[proposition]{Corollary}
\newtheorem{lemma}[proposition]{Lemma}
\newtheorem{theorem}[proposition]{Theorem}

\numberwithin{equation}{section}

\title{A problem in control of elastodynamics with piezoelectric effects\thanks{The work of the first author is partially supported by NSF grants DMS-1521590, DMS-1913004 and DMS-1818772 and Air Force Office of Scientific Research under Award NO: FA9550-19-1-0036. The work of the second and the third authors is partially supported by NSF grant DMS-1818867.}}


\author{Harbir Antil\thanks{Department of Mathematical Sciences, George Mason University. {\tt hantil@gmu.edu}}, 
\and
Thomas S. Brown\thanks{Department of \TSB{ Mathematical Sciences, George Mason University. {\tt tbrown62@gmu.edu}}}, 
\and 
Francisco-Javier Sayas\thanks{Department of Mathematical Sciences, University of Delaware. {\tt fjsayas@udel.edu}}
}


\begin{document}

\title{A problem in control of elastodynamics with piezoelectric effects\thanks{The work of the first and second authors is partially supported by NSF grants DMS-1818772 and DMS-1913004 and Air Force Office of Scientific Research under Award NO: FA9550-19-1-0036. The work of the third author is partially supported by NSF grant DMS-1818867.}}

\author{%
{\sc
Harbir Antil\thanks{Corresponding author. Email: hantil@gmu.edu}
} \\[2pt]
Department of Mathematical Sciences, George Mason University\\[6pt]
{\sc Thomas S. Brown}\thanks{Email: tbrown62@gmu.edu}\\[2pt]
Department of Mathematical Sciences, George Mason University\\[6pt]
{\sc and}\\[6pt]
{\sc Francisco-Javier Sayas}\thanks{Email: fjsayas@udel.edu}\\[2pt]
Department of Mathematical Sciences, University of Delaware
}

\maketitle

\begin{abstract}
{We consider an optimal control problem where the state equations are a coupled hyperbolic-elliptic 
system. This system arises in elastodynamics with piezoelectric effects -- the elastic stress tensor 
is a function of elastic displacement and electric potential. The electric flux acts as the control variable and bound constraints on the control are considered. 
We develop a complete analysis for the state equations and the control problem. The requisite 
regularity on the control, to show the well-posedness of state equations, is enforced using the 
cost functional. We rigorously derive the first 
order necessary and sufficient conditions using adjoint equations and further study their 
well-posedness. 
For spatially discrete (time continuous) problems, we show the convergence of our numerical scheme. Three dimensional numerical experiments  are provided showing convergence properties of a fully discrete method and the practical applicability of our approach.} 
{
Hyperbolic-elliptic system, PDE constraint, control constraints, Piezoelectricity, elastic displacement, electric flux, finite element method, error estimates.
}
\end{abstract}


\section{Introduction}

The goal of this paper is the study of an optimal control problem associated to a physical model of transient wave propagation on a piezoelectric material. We will use the normal component of the electric displacement vector on the boundary to control the motion of the entire solid along time. The state equations consist of an elastic wave equation, where the stress depends on the electric field through a three-index tensor, and an electrostatic equilibrium condition for the electric displacement, which depends on the electric field and the elastic strain. This kind of control problem could be used to design materials that need to take on a desired shape at a certain time, or as studied in \cite{PoKhSh2017}, to reduce the vibrations in the material.

Our work includes: the study of the continuous model and of a generic Finite Element semidiscretization in time; the proof of convergence of the semidiscrete solution to the continuous one; the rigorous derivation of the G\^{a}teaux derivative and the continuous and semidiscrete levels, leading to a mesh-independent optimization algorithm; the detailed description of a fully discrete model; and numerical experiments illustrating convergence and showing performance of the method on a three-dimensional simulation. While the physical setting of the problem under study has been simplified to make it approachable, we emphasize that the state equations modeling the piezoelectric wave propagation mimic the behavior of realistic materials considerably well and the setting contains enough challenges to make it interesting for theoretical and practical study. This is a first installment of a long breadth project that will expand to more complex optimal control setting in our future contributions.

Let $\Omega \subset \mathbb{R}^d$ be a bounded Lipschitz domain with boundary $\Gamma$, partitioned into two non-overlapping relatively open sets $\Gamma_D$ and $\Gamma_N$, and let $T > 0$ be a fixed final time. 
The purpose of this paper is to consider an optimal control problem for solid materials with piezoelectric
effects. The state system is governed by a coupled hyperbolic-elliptic system for elastic displacement 
($\uu$) and electric potential $(\psi)$, respectively. Our goal is to devise a strategy to determine the 
unknown electric flux ($z$: control) to be applied to attain certain desired effects by minimizing a
cost functional $\mathcal{J}(\uu,\psi,z)$ subject to the state equations fulfilled by $(\uu,\psi)$
and control constraints $z\in \mathcal Z_\mathrm{ad}$ where $\mathcal Z_\mathrm{ad}$ is the closed 
and convex set of admissible controls.  For a given desired elastic displacement $\uu_d$ a typical example of $\mathcal{J}$ in control
theory is 
\begin{alignat*}{3}
\mathcal J(\uu, z) := & \frac{1}{2} \int_0^T \|\uu(t) - \uu_d(t)\|_\rho^2 \mathrm d t +
\frac{\alpha}{2} \int_0^T \|\dot{z}(t)\|_{\Gamma}^2 \mathrm d t,
\end{alignat*}
where $\alpha > 0$ denotes the cost of the 
control parameter. Moreover, $\|\cdot\|_{\rho}$ and $\|\cdot\|_\Gamma$ respectively denote a mass density weighted $L^2$ norm
on the bounded domain $\Omega$ and the standard $L^2$-norm on its boundary $\Gamma$. The precise definition of the state equations as well as remaining 
variables and operators will be given in Section~\ref{sec:CCP}.  


The study of piezoelectric materials first arose in the late 19th century after the properties were noticed in certain crystal {s} and the full mathematical setting was first formulated in \cite{Voigt1910}.  We use the standard linearized model (c.f. \cite{KhPeGo2008, DeLaOh2009}), where we have included the grounding condition, $G\psi$ due to the problem dealing only with the electric field $\nabla \psi$ both in the interior and on the boundary.   {This grounding condition will be defined more precisely in the next section.  While we sketch the requirements for the well-posedness of state equation in the context of the control problem, we note that the well-posedness of the PDE has previously been studied in \cite{BrSaSa2017,AkNa2002, KaLaMoKa2006, Kaltenbacher2007, MeNi2005, Cimatti2004,HsSaSa2016, ImJo2012} among others.}

There is a rich amount of existing work on optimal control problems 
governed by elliptic and parabolic problems, we refer to the monograph 
\cite{FTroeltzsch_2010a} and the references therein. On the other hand, 
the work on control of hyperbolic equations, especially numerical analysis, is scarce. 
We refer to the monographs \cite{JLLions_1971a,ILasiecka_RTriggiani_2000a} for the optimal 
control of the wave equation. 
Moreover, we refer to \cite{AKroener_KKunisch_BVexler_2011a} for the convergence of 
semismooth Newton methods for the scalar wave equations. In the context of electromagnetic waves, recent work can be found in \cite{TrYo2012, VeYo2016,Yousept2017}.   
For completeness, we also refer to \cite{CBoehm_MUlbrich_2015a, ALechleiter_JWSchlasche_2017a} 
where algorithmic approaches to solve parameter identification problems with linear elastic 
wave equation are considered, see \cite{AKirsch_ARieder_2016a} for a more general setting. 
While others have worked on control problems involving piezoelectric materials, for example  {\cite{LeNoMeSo2010}},  it has been in the context of shape optimization, or placement of piezoelectric actuators as in \cite{XiSh2016}.   To the best of our knowledge ours is the first work that considers the control of transient elastic waves of such a coupled model and provides complete analysis and numerical analysis for the semi-discrete (discrete in space and continuous in time) problem.


The paper is organized as follows: In Section~\ref{sec:CCP} we begin by introducing the relevant
notation and function spaces. We also describe the state equation and introduce the notion of weak
solutions. This is followed by a description of the control problem. Section~\ref{sec:SCP} is 
devoted to the semidiscrete (continuous in time) control problem. We discuss the well-posedness 
of the state and adjoint equations in Section~\ref{sec:PCM}, their proof is stated in Appendix \ref{sup:sec:Pfs}  {where we will also rely on previous results presented in \cite{BrSaSa2017}}. 
A rigorous derivation of the first order necessary optimality conditions is given next. This is followed by a well-posedness and necessary 
optimality system for the semidiscrete problem. In Section~\ref{sec:conv} we discuss the convergence 
and error estimates for our numerical scheme. We conclude with several illustrative numerical 
examples in Section~\ref{sec:num} which confirm our theoretical findings and further show the
practical relevance of our approach.

\section{The control problem and a semidiscretization} \label{sec:CCP}

We set up our problem in a bounded Lipschitz domain $\Omega \subset \R^d$ with boundary $\Gamma$ with outward pointing normal vector $\nn$.  We partition $\Gamma$ into two non-overlapping relatively open sets $\Gamma_D$ and $\Gamma_N$ with the intention of implementing Dirichlet and Neumann conditions on these parts of the boundary respectively. The material properties of $\Omega$ will be described by three tensors: the elastic stress-strain relation, piezoelectric, and permittivity (or dielectric),
\[
\CC\in L^\infty(\Omega;\R^{(d\times d)\times (d\times d)}),
	\qquad
\EE \in L^\infty(\Omega;\R^{(d\times d)\times d}),
	\qquad
\KK \in L^\infty(\Omega;\R_\symm^{d\times d}),
\]
with the following properties holding almost everywhere in $\Omega$
\begin{alignat*}{6}
& \CC\mathrm A\in \matR_\symm
	\quad & & \forall \mathrm A\in \matR,\\
& (\CC \mathrm A ):\mathrm B=\mathrm A:(\CC\mathrm B) 
	\quad & & \forall \mathrm A,\mathrm B\in \matR, \\
& (\CC\mathrm A):\mathrm A\ge c_0 \mathrm A:\mathrm A
	\quad & & \forall \mathrm A\in \matR_\symm,\\
& \EE \mathbf b\in \matR_\symm
	\quad & & \forall \mathbf b\in \R^d,\\
& (\KK\mathbf b)\cdot\mathbf b \ge k_0 |\mathbf b|^2
	\quad & & \forall \mathbf b\in \R^d.
\end{alignat*}
Here the colon represents the Frobenius inner product of matrices, $c_0$ and $k_0$ are positive constants, and we are using $\mathbb R_\mathrm{sym}^{d\times d}$ to be the space of symmetric $d \times d$ matrices with real components. We will also make use of the transpose of the piezoelectric tensor $\EE^\top$, defined by the relation
\[
(\EE^\top\mathrm A)\cdot\mathbf b=\mathrm A:(\EE\mathbf b)
	\qquad\forall\mathrm A\in \matR, \quad\mathbf b\in \R^d.
\]
The density is a strictly positive function $\rho\in L^\infty(\Omega)$. Using the variables $\uu$ and $\psi$ to denote the elastic displacement and electric potential respectively in $\Omega$ and defining the linear strain (or symmetric gradient) operator by the expression $\eps\uu:=\tfrac12(\nabla\uu+\nabla\uu^\top)$ we are ready to formally define the constitutive relations for the stress and electric displacement as 
\[
\sigma(\uu, \psi):= \CC \eps{\uu} + \EE \nabla \psi,\qquad \qquad 
\mathbf d(\uu, \psi):= \EE^\top \eps{\uu} - \KK \nabla \psi.
\]

\subsection{The state equation}

Using the notation $\HH^1(\Omega):= H^1(\Omega)^d$ and $\HH^{1/2}(\Gamma_D) := H^{1/2}(\Gamma_D)^d$ we introduce the trace operator to the Dirichlet part of the boundary, $ \gamma_D: \HH^1(\Omega) \to \HH^{1/2}(\Gamma_D)$, and we define $\HH_D^1(\Omega):= \mathrm{ker} \, \gamma_D$.  {Note that this is a closed subspace of a Hilbert space, and so is itself a Hilbert space (see, for example, \cite[Theorem 3.2-4]{Kreyszig1989})}. This operator can also be thought of as the restriction of the regular trace operator $\gamma:\HH^1(\Omega) \to \HH^{1/2}(\Gamma)$ to $\Gamma_D$.  {In the case that $\Gamma_D = \emptyset$, and so $\Gamma = \Gamma_N$, we take $H_D^1(\Omega) = H^1(\Omega)$.} The normal component of an element $ {\mathbf p \in }\HH(\ddiv,\Omega)$ will be denoted $\mathbf p \cdot \nn$ \cite{GrRa1986}, and the notation $\langle \cdot, \cdot \rangle_\Gamma$ will represent the $H^{-1/2}(\Gamma) \times H^{1/2}(\Gamma)$ duality pairing.   {The notation $\ddiv$ will always be used to mean that we are taking the divergence of a matrix-valued quantity along the rows.}   Additionally, we define $\mathrm{H}_\symm(\ddiv,\Omega):= \{ \mathrm S \in L^2(\Omega;\matR_\symm): \ddiv \mathrm S \in \LL^2(\Omega)\}$ and $\widetilde{\HH}^{1/2}(\Gamma_N):=\{ \gamma \uu : \uu \in \HH_D^1(\Omega)\}$  {(this is the space of traces of $\mathbf H^1(\Omega)$ functions with zero Dirichlet trace)} so that the normal trace 
\[
\gamma_N: \mathrm H_\symm (\ddiv, \Omega) \longrightarrow \HH^{-1/2}(\Gamma_N) := \widetilde{\HH}^{1/2}(\Gamma_N)^\ast,
\]
is the restriction $\gamma_N \mathrm S = \mathrm S\nn |_{\Gamma_N}$, where we are using the asterisk to denote the dual space.  With the notation $\langle \cdot, \cdot \rangle_N$ to represent the $\HH^{-1/2}(\Gamma_N) \times \widetilde{\HH}^{1/2}(\Gamma_N)$ duality pairing, we define $\gamma_N$ with the integration by parts formula  {(in this context referred to as Betti's formula \cite[Section 7.7]{SaBrHa2019})}
\[
\langle \gamma_N \mathrm S, \gamma \mathbf v \rangle_N := (\mathrm S, \eps{\mathbf v})_\Omega + (\ddiv \mathrm S, \mathbf v)_\Omega \qquad \forall \mathbf v \in \HH_D^1(\Omega),
\]
where $(\punto, \punto)_\Omega$ denotes the $L^2$-inner product for matrix-valued, vector-valued, or scalar-valued functions where appropriate.  The space $L_0^2(\Gamma) := \{ z \in L^2(\Gamma) : \int_\Gamma z = 0\}$ will be used throughout as the space in which our control variable (data for the state equation) takes values.  In order to guarantee the uniqueness of the electric potential that solves the state equation (to be defined shortly) we need to introduce the grounding condition operator $G: H^1(\Omega) \to \R$ such that $G$ is linear, bounded, and $G1 \not = 0$.  One possibility ---the one we use in practice--- is $G\psi = \int_\Omega \psi$. 

For data $z:[0,T] \to L_0^2(\Gamma)$ and for every $t \in [0,T]$ the state equations are  
\begin{alignat*}{8}
\rho \ddot{\uu}(t) &= \ddiv (\sigma(\uu(t), \psi(t))), &\qquad  
	\nabla \cdot \mathbf d(\uu(t),\psi(t))&=0 ,\\
\gamma_D \uu(t) & = \mathbf 0, &
	\gamma_N \sigma(\uu(t), \psi(t)) & = \mathbf 0,\\
G\psi(t) & = 0, &
	\mathbf d(t) \cdot \nn &= z(t),\\
\uu(0) &= \mathbf 0, &\quad \dot{\uu}(0) &= \mathbf 0  {,}
\end{alignat*}
 {where equality is to be understood in the distributional sense in the appropriate spaces}.   Although we take homogeneous source and boundary terms (except for $z$), we are easily able to handle the case with non-homogenous terms  {(for more details see Section 3.3 of \cite{Brown2018})}.  For what follows, we will deal with a slightly weaker concept of solution.  To precisely present this idea, we need to introduce the weighted space $\LL_\rho^2(\Omega)$, that is $\LL^2(\Omega)$ using the inner product $(\punto, \punto)_\rho := (\rho \punto, \punto)_\Omega$. 
The space $\HH_D^{-1}(\Omega)$ is the dual of $\HH_D^1(\Omega)$ when we identify $\LL_\rho^2(\Omega)$ with its dual and therefore 
\[
\HH_D^1(\Omega) \subset \LL_\rho^2(\Omega) \subset \HH_D^{-1}(\Omega)
\]
is a well-defined Gelfand triple.  Furthermore we use $\langle \cdot, \cdot \rangle_\rho$ to denote the $\HH_D^{-1}(\Omega) \times \HH_D^1(\Omega)$ duality pairing.  We include the grounding condition in the space  $H_G^1(\Omega): = \{\psi \in H^1(\Omega) : G \psi = 0\}$,  {and notice that in this space we have the norm equivalence $\|\nabla \psi\|_\Omega \approx \|\psi\|_{1, \Omega}$ as a consequence of the Deny-Lions Theorem (see, for example, \cite[Section 7.3]{SaBrHa2019}).}
 {In order to shorten the statement of the problem, we introduce the bilinear form
\begin{alignat*}{3}
a((\uu, \psi), (\ww, \varphi)) := & \; (\CC\eps{\uu} + \EE \nabla \psi, \eps{\ww})_\Omega + (-\EE^\top\eps{\uu} + \KK \nabla \psi, \nabla \varphi)_\Omega,\\
 = & \; (\sigma(\uu,\psi),\eps{\ww})_\Omega - (\mathbf d(\uu, \psi), \nabla \varphi)_\Omega. 
\end{alignat*}}
When we refer to a solution of the state equations, we mean a pair of functions
\begin{subequations} \label{eq:state}
\begin{alignat}{3} 
\uu &\in \CC^0([0,T];\HH_D^1(\Omega)) \cap \CC^1([0,T];\LL_\rho^2(\Omega)) \cap \CC^2([0,T];\HH_D^{-1}(\Omega)),\\
\psi &\in \CC^0([0,T];H_G^1(\Omega)), 
\end{alignat}
such that for all $t \in [0,T]$
\begin{alignat}{4} 
\label{eq:stateA}
\langle \ddot{\uu}(t), \ww \rangle_\rho + a((\uu(t), \psi(t)),(\ww,\varphi)) &= - \langle z(t), \gamma \varphi \rangle_\Gamma  \quad  \forall (\ww, \varphi) \in \HH_D^1(\Omega) \times H_G^1(\Omega),\\
\uu(0)& = \mathbf 0, \quad \dot{\uu}(0) = \mathbf 0 \label{eq:stateB}.
\end{alignat}
\end{subequations}
We remark here that we are using $\HH_D^1(\Omega) \times H_G^1(\Omega)$ as a test space, but this is equivalent to using $\HH_D^1(\Omega) \times H^1(\Omega)$, since $z(t)\in L_0^2(\Gamma)$ for all $t$.  In other words, it does not matter if we test with functions from $H_G^1(\Omega)$ or from the entire space $H^1(\Omega)$, and we will use the two interchangeably. 

\subsection{The control problem}  

Since we will be using the Neumann boundary condition on the electric displacement as control, we need to define $\mathcal Z := \{z \in H^1(0,T; L_0^2(\Gamma)): z(0) = 0\}$ with the norm
\[
\triple{z}{\mathcal Z}^2:= \int_0^T \|\dot{z}(\tau)\|_\Gamma^2 \mathrm d \tau
\] 
making it a Hilbert space, and the admissible set $\mathcal Z_\mathrm{ad} := \{ z \in \mathcal Z: z_a \leq z(t) \leq z_b  \quad a.e. \quad \forall t\},$ where $ z_a \leq 0 \leq z_b$ are constants.  Note that this sign restriction is needed to ensure that $\mathcal Z_\mathrm{ad} \not = \emptyset$. We will use the space $\mathcal U:= \CC^0([0,T];\HH_D^1(\Omega))$ endowed with the norm 
\[
\triple{\uu}{\mathcal U}^2:= \int_0^T \|\uu(\tau)\|_{1,\Omega}^2 \mathrm d \tau,
\] 
as the space for our elastic displacement,  {noting that} this space is not complete  {with respect to this norm}.  We will also make use of the weaker norm \[
\triple{\uu}{\rho}^2: = \int_0^T \|\uu(\tau)\|_\rho^2 \mathrm d \tau
\]
in $\mathcal U$.
As a general rule, and to help the reader handle different norms, triple bars will always be used for norms affecting the space and time variables, while double bars will be used for norms in the space variables (including dual norms). The solution operator for the state equation \eqref{eq:state} is $S: \mathcal Z \longrightarrow \mathcal U$ given by $Sz=\mathbf u$, where the pair $(\mathbf u,\psi)$ satisfies \eqref{eq:state}.

We delay the statement and proof that this operator is well-defined to Section \ref{sec:PCM} and Appendix \ref{sup:sec:Pfs} respectively. 
The desired state for the elastic displacement is a function $\uu_d \in \mathcal U$ such that $\uu_d(0) = 0$. The initial value for the {\em given} desired state is set to zero, matching the one for the state equation. If a desired state were to start from a non-zero value at $t=0$, we would make the state equation start with the same one. The functional we wish to minimize is 
\begin{alignat}{3}
\mathcal J(\uu, z) := & \frac12 \int_0^T \|\uu(t) - \uu_d(t)\|_\rho^2 \mathrm d t + \frac{\alpha}{2} \int_0^T \|\dot{z}(t)\|_{\Gamma}^2 \mathrm d t \label{eq:Jdef}\\
 = & \frac12 \triple{\uu - \uu_d}{\rho}^2 + \frac{\alpha}{2} \triple{z}{\mathcal Z}^2, \nonumber
\end{alignat}
subject to 
\[
Sz = \uu, \qquad \qquad z \in \mathcal Z_\mathrm{ad}.
\]
Here $\alpha$ is a positive constant.  We can rewrite the functional in reduced form by eliminating the restriction given by the state equation:
\begin{equation} \label{eq:func}
j(z) := \mathcal J(Sz,z)
= \frac12 \triple{Sz - \uu_d}{\rho}^2 + \frac{\alpha}{2} \triple{z}{\mathcal Z}^2.
\end{equation}
The control problem can now be stated as
\begin{equation} \label{eq:cont2}
j(z) = \mathrm{min}! \qquad z \in \mathcal{Z}_\mathrm{ad}.
\end{equation}

\subsection{Semidiscretization in space} \label{sec:SCP}

We now shift our perspective to a version of the control problem which has been discretized in space, while kept continuous in time.  The goal of this semidiscretization is to state the problem in such a way that it would be natural to solve the state equation using a Finite Element method. We keep the same geometric setting, but now introduce finite-dimensional subspaces $\bff V_h \subset \HH_D^1(\Omega)$ and $W_h \subset H^1(\Omega)$ with the additional requirement that $W_h$ contain the space of constant functions, i.e., $\mathcal P_0(\Omega) \subset W_h$.  We also define the test space $W_h^G := W_h \cap H_G^1(\Omega)$.  Typically we will have a simplicial mesh of $\Omega$, denoted $\mathcal T_h$, and we will define 
\[
W_h := \{\varphi \in \mathcal C^0( {\overline{\Omega}}): \varphi |_K \in \mathcal P_k(K) \quad \forall K \in \mathcal T_h \}, \qquad \mathbf V_h := \{ \mathbf w \in W_h^d : \mathbf w|_{\Gamma_D} = 0\},
\]
where, for positive integer $k$, $\mathcal P_k$ is the space of polynomials of degree less than or equal to $k$. We emphasize that we will not need any particular choice of $W_h$ and $\mathbf V_h$ for our method to be meaningful, but that we will require some kind of approximation property later on. 

Now given $z \in \CC^0([0,T]; L_0^2(\Gamma))$ such that $z(0)=0$ and $\dot{z} \in L^1(0,T;L_0^2(\Omega))$, we look for 
\begin{subequations} \label{eq:SDstate}
\begin{equation}
(\uu_h, \psi_h) \in \CC^2([0,T];\mathbf V_h) \times \CC^0([0,T];W_h^G)
\end{equation}
that for all $t \in [0,T]$ satisfy
\begin{alignat}{4}
 ( \rho \ddot{\uu}_h(t), \ww )_\Omega + a((\uu_h(t), \psi_h(t)),(\ww,\varphi)) 
&= - \langle z(t), \gamma \varphi \rangle_\Gamma \hspace{5pt} \forall (\ww, \varphi) \in \bff V_h \times W_h^G, \\
 \uu_h(0) = \bff 0, \qquad \dot{\uu}_h(0) &= \bff 0. 
\end{alignat}
\end{subequations}
With the definition of the space $\mathcal U_h:= \CC^0([0,T];\bff V_h)$, the semidiscrete state equation solver $S_h: \mathcal Z \to \mathcal U_h$ is given by $S_hz=\uu_h$, where $(\uu_h,\psi_h)$ solves \eqref{eq:SDstate}. 

The semidiscrete reduced functional $j_h: \mathcal Z \to [0,\infty)$ is given by 
\[
j_h(z) := \frac12 \triple{S_h z - \uu_d }{\rho}^2 + \frac{\alpha}{2} \triple{z}{\mathcal Z}^2=\mathcal J(S_hz,z).
\]
We will also need a semidiscrete control variable.  To define this properly, we create a partition of $\Gamma$, denoted $\Gamma_h$. We take the semidiscrete control to be in the space $\mathcal Z_h:= \{z \in \mathcal Z: z(t) \in \mathcal P_0(\Gamma_h) \quad \forall t\}$, where $\mathcal P_0(\Gamma_h)$ is the space of piecewise constant functions on $\Gamma_h$. In the case where $\Omega$ is a polyhedral domain and we have used Finite Element spaces on a triangulation of $\Omega$ as choices for $W_h$ and $\mathbf V_h$, it is natural (and practical from the point of view of implementation) to set $\Gamma_h$ to be the inherited partition of $\Gamma$, although this is not necessary for the theoretical arguments that follow. We note that $\mathcal Z_h$ is a closed subspace of $\mathcal Z$.  
The admissible set for the semidiscrete control problem is  $\mathcal Z_\mathrm{ad}^h := \mathcal Z_h \cap \mathcal Z_\mathrm{ad}$, so that the control problem is 
\[
j_h(z_h) = \min! \qquad z_h \in \mathcal Z_\mathrm{ad}^h. 
\]

\section{Solvability and optimality conditions} \label{sec:PCM}

It is the goal of this section to provide more details about  the continuous control problem introduced in Section \ref{sec:CCP}.  Whenever we use the symbol $\lesssim$, we will be hiding constants that are independent of the time variable.  Additionally, when we use this symbol in the semidiscrete problem, the constants that we are hiding will be independent of $h$, that is, independent of the choice of the finite-dimensional subspaces.  We now state a theorem about the well-posedness of the state equation \eqref{eq:state}, but save a proof for Appendix \ref{sup:sec:Pfs}.

\begin{theorem} \label{thm:1}
If $z \in \mathcal Z$, then the state equation \eqref{eq:state} has a unique solution that satisfies the bound 
\[
\|\uu(t)\|_{1,\Omega} + \|\psi(t)\|_{1,\Omega} \lesssim \int_0^t \|z(\tau)\|_\Gamma \mathrm d \tau + \int_0^t \|\dot{z}(\tau) \|_\Gamma \mathrm d \tau.
\]
Therefore $S:\mathcal Z\to \mathcal U$ is bounded.
\end{theorem}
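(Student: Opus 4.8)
The plan is to exploit the hyperbolic-elliptic structure by first eliminating the electric potential through the elliptic equation and then treating the resulting problem as a second-order evolution equation in $\uu$ alone. For fixed $t$, testing \eqref{eq:stateA} with $(\mathbf 0,\varphi)$ isolates the elliptic part
\[
(\KK\nabla\psi(t),\nabla\varphi)_\Omega = (\EE^\top\eps{\uu(t)},\nabla\varphi)_\Omega - \langle z(t),\gamma\varphi\rangle_\Gamma \qquad \forall\,\varphi \in H_G^1(\Omega).
\]
The coercivity hypothesis $(\KK\mathbf b)\cdot\mathbf b\ge k_0|\mathbf b|^2$ together with the Deny--Lions norm equivalence on $H_G^1(\Omega)$ makes the left-hand side an inner product, so Lax--Milgram produces a unique $\psi(t)$ depending linearly and boundedly on the data; writing $\psi(t)=\Psi\uu(t)+\Phi z(t)$ I would record the estimate $\|\psi(t)\|_{1,\Omega}\lesssim\|\uu(t)\|_{1,\Omega}+\|z(t)\|_\Gamma$, where the boundary term is controlled using $\|\gamma\varphi\|_\Gamma\lesssim\|\varphi\|_{1,\Omega}$.

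Substituting $\psi=\Psi\uu+\Phi z$ into \eqref{eq:stateA} tested with $(\ww,\mathbf 0)$ yields the reduced equation $\langle\ddot\uu(t),\ww\rangle_\rho+\tilde a(\uu(t),\ww)=\langle F(t),\ww\rangle_\rho$, with $\tilde a(\uu,\ww):=(\CC\eps{\uu}+\EE\nabla(\Psi\uu),\eps{\ww})_\Omega$ and forcing $F(t)$ encoding $-(\EE\nabla(\Phi z(t)),\eps{\,\cdot\,})_\Omega$. The crucial structural fact is that $\tilde a$ is symmetric and coercive: using the transpose identity and the defining relation for $\Psi$ with test function $\Psi\uu$, one finds $(\EE\nabla(\Psi\uu),\eps{\uu})_\Omega=(\KK\nabla(\Psi\uu),\nabla(\Psi\uu))_\Omega\ge 0$, so that $\tilde a(\uu,\uu)=(\CC\eps{\uu},\eps{\uu})_\Omega+(\KK\nabla(\Psi\uu),\nabla(\Psi\uu))_\Omega\ge c_0\|\eps{\uu}\|_\Omega^2\gtrsim\|\uu\|_{1,\Omega}^2$ by Korn's inequality; an analogous computation gives symmetry. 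Thus the reduced problem is a standard linear wave equation, and I would invoke the abstract well-posedness theory for second-order evolution equations from \cite{BrSaSa2017} (equivalently, a Galerkin construction with passage to the limit) to obtain a unique solution with the regularity stated in \eqref{eq:state}.

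For the quantitative bound I would run the energy argument on $E(t):=\tfrac12\|\dot\uu(t)\|_\rho^2+\tfrac12\tilde a(\uu(t),\uu(t))$, which satisfies $\dot E(t)=\langle F(t),\dot\uu(t)\rangle_\rho$. The main obstacle is that $F(t)$ only lives in $\HH_D^{-1}(\Omega)$ while $\dot\uu(t)$ lies in $\LL_\rho^2(\Omega)$, so the pairing cannot be estimated pointwise; the resolution --- and the reason the hypothesis $z\in\mathcal Z$ (with $\dot z\in L^2$ and $z(0)=0$) enters --- is to integrate $\int_0^t\langle F(s),\dot\uu(s)\rangle_\rho\,\mathrm ds$ by parts in time, transferring the derivative onto $z$. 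Since $z(0)=0$ and $\uu(0)=\mathbf 0$ the boundary terms vanish and one is left with $-(\EE\nabla(\Phi z(t)),\eps{\uu(t)})_\Omega+\int_0^t(\EE\nabla(\Phi\dot z(s)),\eps{\uu(s)})_\Omega\,\mathrm ds$, both controlled by $\|z(t)\|_\Gamma\|\uu(t)\|_{1,\Omega}$ and $\int_0^t\|\dot z(s)\|_\Gamma\|\uu(s)\|_{1,\Omega}\,\mathrm ds$ via the elliptic estimate. Writing $y(t):=\|\uu(t)\|_{1,\Omega}$, coercivity gives $y(t)^2\lesssim\|z(t)\|_\Gamma\,y(t)+\int_0^t\|\dot z(s)\|_\Gamma\,y(s)\,\mathrm ds$; absorbing the first term with Young's inequality and applying a quadratic Gronwall argument produces $y(t)\lesssim\|z(t)\|_\Gamma+\int_0^t\|\dot z(s)\|_\Gamma\,\mathrm ds$, which yields the stated bound for $\uu$.

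Finally I would recover the bound on $\psi$ by inserting the $\uu$ estimate into the elliptic inequality $\|\psi(t)\|_{1,\Omega}\lesssim\|\uu(t)\|_{1,\Omega}+\|z(t)\|_\Gamma$, giving the full pointwise estimate. Boundedness of $S:\mathcal Z\to\mathcal U$ then follows by integrating the pointwise bound over $[0,T]$ and using Cauchy--Schwarz to control $\int_0^t\|\dot z\|_\Gamma\,\mathrm d\tau$ and $\int_0^t\|z\|_\Gamma\,\mathrm d\tau$ by $\triple{z}{\mathcal Z}$, so that $\triple{Sz}{\mathcal U}^2=\int_0^T\|\uu(\tau)\|_{1,\Omega}^2\,\mathrm d\tau\lesssim T\,\triple{z}{\mathcal Z}^2$.
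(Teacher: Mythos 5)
Your route is genuinely different from the paper's. The paper also begins by eliminating the potential through the elliptic solve (its operators $M_\Omega$, $M_\Gamma$ play exactly the role of your $\Psi$, $\Phi$), but instead of staying with a second-order equation it introduces the antiderivatives $\mathrm S=\partial^{-1}\CC\eps{\uu}$ and $\mathbf r=\partial^{-1}\nabla\psi$ as extra unknowns and rewrites everything as a first-order system $\dot U=AU+F$ on a Hilbert space $\mathbb H$, where $A$ generates a $C_0$-group of isometries. The antiderivative is precisely what converts the $\HH_D^{-1}(\Omega)$-valued forcing you wrestle with into the $\mathbb H$-valued datum $F=(\mathbf 0,0,M_\Gamma z)$, and two group estimates (one for $U$, one for $\dot U$ using $\dot z\in L^2$ and $z(0)=0$) replace your integration by parts in time. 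Your structural observation that the stiffened form $\tilde a(\uu,\ww)=(\CC\eps{\uu},\eps{\ww})_\Omega+(\KK\nabla(\Psi\uu),\nabla(\Psi\ww))_\Omega$ is symmetric with a nonnegative piezoelectric contribution is correct, and it is the energy identity hidden inside the paper's $(AU,U)_{\mathbb H}=0$; making it explicit buys you a completely standard wave-equation framework, at the cost of having to establish the regularity string $\uu\in\CC^0([0,T];\HH_D^1(\Omega))\cap\CC^1([0,T];\LL_\rho^2(\Omega))\cap\CC^2([0,T];\HH_D^{-1}(\Omega))$ in \eqref{eq:state} by hand: note that a Galerkin construction must use your time-integration-by-parts trick already at the discrete level to keep approximations bounded in the energy space, since the forcing never lies in $\LL^2_\rho(\Omega)$, whereas the paper gets strong solutions and continuity in time directly from semigroup regularity theory.

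There is one genuine flaw to repair: the coercivity claim $\tilde a(\uu,\uu)\ge c_0\|\eps{\uu}\|_\Omega^2\gtrsim\|\uu\|_{1,\Omega}^2$ ``by Korn's inequality'' is false when $\Gamma_D=\emptyset$, a case the paper explicitly admits (then $\HH_D^1(\Omega)=\HH^1(\Omega)$, and Section 4 discusses the nontrivial rigid-motion space $\mathcal M=\{\mathbf m:\eps{\mathbf m}=0\}$): rigid motions make $\eps{\cdot}$ degenerate, and Korn's first inequality requires $\Gamma_D$ of positive measure. The fix is exactly what the paper does at the end of its proof: let the energy control only $\|\dot\uu(t)\|_\rho$ and $\|\eps{\uu(t)}\|_\Omega$, recover $\|\uu(t)\|_\Omega\le\int_0^t\|\dot\uu(\tau)\|_\rho\,\mathrm d\tau$ from the vanishing initial data, and then invoke Korn's second inequality $\|\uu\|_{1,\Omega}\lesssim\|\uu\|_\Omega+\|\eps{\uu}\|_\Omega$. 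With that modification your Gronwall chain goes through unchanged (the hidden constant then depends on $T$, which is all the theorem requires), and your elliptic bound for $\psi$ and the concluding estimate for $S:\mathcal Z\to\mathcal U$ are fine.
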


We now turn our attention to showing that the control problem is uniquely solvable. 

\begin{theorem} For the continuous control problem discussed in Section \ref{sec:CCP}, the following hold:  \label{prop:wlsc} 
\begin{enumerate}
\item[(a)] the operator $S$ is linear and bounded, 
\item[(b)] the admissible set $\mathcal Z_\mathrm{ad}$ is closed and convex in $\mathcal Z$, hence it is also weakly closed, 
\item[(c)] the functional $j: \mathcal Z \to \R$ defined by \eqref{eq:func} is continuous and (strictly) convex, therefore it is also weakly lower semicontinuous,
\item[(d)] the functional $j: \mathcal Z \to \R$ is coercive. 
\end{enumerate}
Therefore the control problem \eqref{eq:cont2} has a unique weak solution.
\end{theorem}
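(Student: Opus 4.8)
The plan is to follow the direct method of the calculus of variations for convex minimization over a Hilbert space: items (a)--(d) are precisely the hypotheses needed, and once they are verified the existence and uniqueness of a minimizer follow from the standard existence theorem for coercive, weakly lower semicontinuous functionals over nonempty, weakly closed admissible sets. Thus the bulk of the work is in checking (a)--(d) carefully, and the final conclusion is then essentially automatic.

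Three of the four items are routine. For (a), linearity of $S$ is immediate from the linearity of the state system \eqref{eq:state} in the datum $z$ (the form $a(\cdot,\cdot)$ is bilinear and the right-hand side is linear in $z$, so $z\mapsto(\uu,\psi)$ respects sums and scalar multiples), while boundedness of $S:\mathcal Z\to\mathcal U$ is exactly the content of Theorem \ref{thm:1}. For (c), continuity of $j$ follows by writing it as the continuous squared $\triple{\cdot}{\rho}$-norm composed with the bounded affine map $z\mapsto Sz-\uu_d$, plus the continuous term $\tfrac{\alpha}{2}\triple{z}{\mathcal Z}^2$; strict convexity follows because the first term is convex (a squared seminorm precomposed with an affine map) while the second is strictly convex since $\alpha>0$, and a sum of a convex and a strictly convex functional is strictly convex. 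Continuity together with convexity then yields weak lower semicontinuity by the usual Mazur-type argument. Coercivity (d) is the easiest, since $j(z)\ge\tfrac{\alpha}{2}\triple{z}{\mathcal Z}^2\to+\infty$ as $\triple{z}{\mathcal Z}\to\infty$.

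The step that requires the most care, and the one I expect to be the main obstacle, is the closedness assertion in (b). Convexity of $\mathcal Z_\mathrm{ad}$ is clear, since the pointwise bounds $z_a\le z(t)\le z_b$ are preserved under convex combinations, and nonemptiness follows from $z_a\le 0\le z_b$, so that $0\in\mathcal Z_\mathrm{ad}$. For closedness in the $\mathcal Z$-norm I would exploit the side condition $z(0)=0$: writing $z(t)=\int_0^t\dot z(s)\,\mathrm ds$ gives $\|z(t)\|_\Gamma\le\sqrt{t}\,\triple{z}{\mathcal Z}$, so that $\mathcal Z$ embeds continuously into $\mathcal C^0([0,T];L^2(\Gamma))$. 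Consequently, if $z_n\to z$ in $\mathcal Z$ with $z_n\in\mathcal Z_\mathrm{ad}$, then $z_n(t)\to z(t)$ strongly in $L^2(\Gamma)$ for every $t$; since the order interval $\{v\in L^2(\Gamma):z_a\le v\le z_b\ \text{a.e.}\}$ is closed in $L^2(\Gamma)$, the limit $z(t)$ obeys the same bounds for all $t$. Hence $z\in\mathcal Z_\mathrm{ad}$, so $\mathcal Z_\mathrm{ad}$ is closed, and being closed and convex it is weakly closed by Mazur's theorem.

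With (b)--(d) established the direct method closes the argument: given a minimizing sequence $(z_n)\subset\mathcal Z_\mathrm{ad}$, coercivity bounds it in the Hilbert space $\mathcal Z$, so a subsequence converges weakly to some $z^\ast$; weak closedness of $\mathcal Z_\mathrm{ad}$ gives $z^\ast\in\mathcal Z_\mathrm{ad}$, and weak lower semicontinuity gives $j(z^\ast)\le\liminf j(z_n)=\inf_{\mathcal Z_\mathrm{ad}}j$, so $z^\ast$ is a minimizer; uniqueness then follows from the strict convexity shown in (c). I would close with the remark that the argument uses nothing about $S$ beyond its linearity and boundedness, so it transfers verbatim to the semidiscrete setting with $S_h$ and $\mathcal Z_\mathrm{ad}^h$.
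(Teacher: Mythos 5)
Your proof is correct and follows essentially the same route as the paper: the paper simply declares properties (a)--(d) straightforward and invokes the standard existence--uniqueness theory for convex optimization on normed spaces (citing Ciarlet, Section 7.4), which is exactly the direct-method argument you spell out. Your verification of the individual items --- in particular the embedding $\|z(t)\|_\Gamma\le\sqrt{t}\,\triple{z}{\mathcal Z}$ used for closedness of $\mathcal Z_{\mathrm{ad}}$, and strict convexity coming from the $\alpha$-term since $\triple{\cdot}{\mathcal Z}$ is a genuine norm on $\mathcal Z$ thanks to $z(0)=0$ --- correctly supplies the details the paper omits.
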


\begin{proof} Properties (a)-(d) are straightforward to prove. Unique solvability of the control problem follows from the well-known theory of convex optimization on normed spaces (see \cite[Section 7.4]{Ciarlet1982}). 
\end{proof}

\paragraph{Remark} Note that existence of optimal control can also be proved for more general functionals of the form
\[
j(z):=J_1(Sz)+J_2(z),
\]
where $J_1:\mathcal U \to [0,\infty)$ is weakly lower semicontinuous (or, even more generally, if $J_1\circ S:\mathcal Z\to [0,\infty)$ is weakly lower semicontinuous) and $J_2:\mathcal Z\to \mathbb R\cup \{\infty\}$ is proper convex, lower semicontinuous and admitting a lower bound of the form
\[
J_2(z)\ge K_1 \triple{z}{\mathcal Z}+K_2 \quad\forall z\in \mathcal Z,
\]
where $K_1>0$ and $K_2\in \mathbb R$.

\subsection{Adjoint problem and G\^ateaux derivative}

For data $\mathbf f: [0,T] \longrightarrow \HH_D^1(\Omega)$, we look for
\begin{subequations}\label{eq:adj}
\begin{alignat}{3}
\pp &\in \CC^2([0,T];\LL_\rho^2(\Omega)) \cap \CC^1([0,T];\HH_D^1(\Omega)),\\
\xi & \in \CC^1([0,T];H^1(\Omega)),
\end{alignat}
satisfying
\begin{alignat}{5}
\label{eq:adjC}
\rho \ddot{\pp}(t) &= \ddiv(\sigma(\pp(t), \xi(t))) + \rho \mathbf f(t) &\qquad & t \in [0,T],\\
\label{eq:adjD}
0 &= \nabla \cdot \dd(\pp(t), \xi(t)) && t \in [0,T], \\
\label{eq:adjE}
\gamma_D \pp(t)&= \mathbf 0 && t \in [0,T], \\
\label{eq:adjF}
\gamma_N \sigma(\pp(t), \xi(t)) &= \mathbf 0 && t \in [0,T], \\
\label{eq:adjGG}
G\xi(t) &= 0 && t \in [0,T], \\
\label{eq:adjH}
\dd(\pp(t), \xi(t)) \cdot \nn & = 0 && t \in [0,T], \\
\pp(T) &= \mathbf 0, \quad \dot{\pp}(T) = \mathbf 0 \label{eq:adjG}.
\end{alignat}
\end{subequations}
We will refer to \eqref{eq:adj} as the adjoint equations. We will also consider the space $\mathcal X:=L^2(0,T;L^2(\Gamma))$ with norm
\[
\triple{y}{\mathcal X}^2:=\int_0^T \| y(t)\|_\Gamma^2\mathrm dt,
\]
and the operator $R:\mathcal U \to \mathcal X$ given by $R\mathbf f=\gamma\xi$, where $(\pp,\xi)$ solve \eqref{eq:adj}. 

\begin{theorem} \label{thm:2}
For $\mathbf f \in \mathcal U$, \eqref{eq:adj} is uniquely solvable and we have the bound
\[
\|\pp(t)\|_{1,\Omega} + \|\xi(t)\|_{1,\Omega}  \lesssim \int_t^T \|\mathbf f(\tau)\|_\Omega \mathrm d \tau.
\]
Therefore $R:\mathcal U \to \mathcal X$ is bounded. 
\end{theorem}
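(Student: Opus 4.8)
The plan is to reduce the adjoint system to the same abstract hyperbolic-elliptic framework already used for the state equation in Theorem~\ref{thm:1}, exploiting the fact that the two problems differ only in the direction of time and in the way the data enters. First I would reverse time by setting $\tilde\pp(t):=\pp(T-t)$, $\tilde\xi(t):=\xi(T-t)$ and $\tilde{\mathbf f}(t):=\mathbf f(T-t)$; since $\ddot{\tilde\pp}(t)=\ddot\pp(T-t)$, the terminal conditions \eqref{eq:adjG} become the homogeneous initial conditions $\tilde\pp(0)=\mathbf 0$, $\dot{\tilde\pp}(0)=\mathbf 0$, and the system becomes forward in time. Testing \eqref{eq:adjC} with $\ww\in\HH_D^1(\Omega)$ and \eqref{eq:adjD} with $\varphi\in H_G^1(\Omega)$, integrating by parts through Betti's formula, and using the homogeneous traction and flux conditions \eqref{eq:adjF}, \eqref{eq:adjH}, one arrives at the weak problem
\[
(\rho\ddot{\tilde\pp}(t),\ww)_\Omega+a((\tilde\pp(t),\tilde\xi(t)),(\ww,\varphi))=(\rho\tilde{\mathbf f}(t),\ww)_\Omega\qquad\forall(\ww,\varphi)\in\HH_D^1(\Omega)\times H_G^1(\Omega),
\]
which has exactly the structure of \eqref{eq:stateA}, except that the right-hand side is now a volume force acting on the elastic test function rather than the boundary term $-\langle z,\gamma\varphi\rangle_\Gamma$ acting on the electric test function.

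Next I would decouple the electric potential from the elastic displacement. For fixed $\tilde\pp$, choosing $\ww=\mathbf 0$ shows that $\tilde\xi\in H_G^1(\Omega)$ solves the stationary problem $(\KK\nabla\tilde\xi,\nabla\varphi)_\Omega=(\EE^\top\eps{\tilde\pp},\nabla\varphi)_\Omega$ for all $\varphi\in H_G^1(\Omega)$. By the coercivity of $\KK$ together with the norm equivalence $\|\nabla\varphi\|_\Omega\approx\|\varphi\|_{1,\Omega}$ on $H_G^1(\Omega)$ (Deny--Lions), Lax--Milgram yields a unique $\tilde\xi=\Phi(\tilde\pp)$ with $\|\tilde\xi\|_{1,\Omega}\lesssim\|\eps{\tilde\pp}\|_\Omega\lesssim\|\tilde\pp\|_{1,\Omega}$. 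Substituting $\tilde\xi=\Phi(\tilde\pp)$ into the elastic equation produces a reduced second-order-in-time equation $(\rho\ddot{\tilde\pp},\ww)_\Omega+\tilde a(\tilde\pp,\ww)=(\rho\tilde{\mathbf f},\ww)_\Omega$ with $\tilde a(\tilde\pp,\ww):=(\CC\eps{\tilde\pp}+\EE\nabla\Phi(\tilde\pp),\eps{\ww})_\Omega$. Using the defining relation for $\Phi$ and the transpose identity for $\EE^\top$, one checks that $\tilde a(\tilde\pp,\ww)=(\CC\eps{\tilde\pp},\eps{\ww})_\Omega+(\KK\nabla\Phi(\tilde\pp),\nabla\Phi(\ww))_\Omega$, which is symmetric; coercivity on $\HH_D^1(\Omega)$ then follows from $\tilde a(\tilde\pp,\tilde\pp)\ge c_0\|\eps{\tilde\pp}\|_\Omega^2$ and Korn's inequality. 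Well-posedness with the required regularity then follows by invoking the abstract theory for wave equations with a symmetric coercive generator, exactly as for the state equation (see \cite{BrSaSa2017} and Appendix~\ref{sup:sec:Pfs}).

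For the bound I would use the energy identity of the reduced equation. Testing with $\ww=\dot{\tilde\pp}(t)$ gives $\tfrac{d}{dt}E(t)=(\tilde{\mathbf f}(t),\dot{\tilde\pp}(t))_\rho$, where $E:=\tfrac12\|\dot{\tilde\pp}\|_\rho^2+\tfrac12\tilde a(\tilde\pp,\tilde\pp)$ controls both $\|\dot{\tilde\pp}\|_\rho$ and $\|\tilde\pp\|_{1,\Omega}$. Because the data enters as a genuine body force, Cauchy--Schwarz and integration in time give $\sqrt{E(t)}\lesssim\int_0^t\|\tilde{\mathbf f}(\tau)\|_\Omega\,\mathrm d\tau$ directly, with no time derivative of $\mathbf f$ needed---this is precisely why the estimate here is cleaner than the one in Theorem~\ref{thm:1}, where the boundary data forces an integration by parts in time that produces the extra $\dot z$ term. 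Undoing the time reversal yields $\|\pp(t)\|_{1,\Omega}\lesssim\int_t^T\|\mathbf f(\tau)\|_\Omega\,\mathrm d\tau$, and the elliptic bound $\|\xi(t)\|_{1,\Omega}\lesssim\|\pp(t)\|_{1,\Omega}$ from the decoupling step completes the stated estimate; boundedness of $R$ follows from $\|R\mathbf f(t)\|_\Gamma=\|\gamma\xi(t)\|_\Gamma\lesssim\|\xi(t)\|_{1,\Omega}$ after integrating in $t$. The main obstacle is the decoupling/coercivity step: one must verify that eliminating $\xi$ leaves a symmetric positive-definite elasticity operator, which hinges on the sign pattern of the piezoelectric coupling (the $-\EE^\top$ in $\mathbf d$) cancelling the cross terms in $a$, so that the standard wave-equation well-posedness and energy estimates become applicable.
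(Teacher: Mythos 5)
Your argument is essentially correct, but it takes a genuinely different route from the paper's. The paper (Appendix~\ref{sup:sec:Pfs}) never forms a reduced elastic operator: it recasts everything as a first-order system in $(\uu,\mathrm S,\mathbf r)$, shows the generator $A$ produces a $C_0$-group of isometries on $\mathbb H$, solves the \emph{forward} system with body-force data $F(t)=(\bff f(T-t),0,\mathbf 0)$ (the bound \eqref{sup:eq:A200} shows $\bff f\in\mathcal U$ gives $F\in\mathcal C([0,T];D(A))$, hence a classical solution), and then manufactures the adjoint solution by the antiderivative-plus-reversal trick $\pp:=(\partial^{-1}\uu)(T-\cdot)$, $\xi:=g^{-1}(M_\Omega\eps{\pp})$, reading the stability bound off the group estimate. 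Your Schur-complement elimination $\tilde\xi=\Phi(\tilde\pp)$ is in fact the same elimination the paper performs through $M_\Omega$ (your $\nabla\Phi$ is their $M_\Omega$), but you then stay with the second-order-in-time equation; your identity $\tilde a(\tilde\pp,\ww)=(\CC\eps{\tilde\pp},\eps{\ww})_\Omega+(\KK\nabla\Phi(\tilde\pp),\nabla\Phi(\ww))_\Omega$ is correct, and the resulting symmetric form plus the energy identity does deliver the estimate with no $\dot{\bff f}$, exactly as you observe. What your route buys is a self-contained explanation of why the adjoint bound is cleaner than the one in Theorem~\ref{thm:1} (the data is a body force, not boundary data); what the paper's route buys is that well-posedness is inherited verbatim from the state-equation machinery (the same operator $A$), and the slightly odd regularity in \eqref{eq:adj}, $\pp\in\CC^2([0,T];\LL^2_\rho(\Omega))\cap\CC^1([0,T];\HH^1_D(\Omega))$, drops out of the antiderivative construction for free.

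Two points in your write-up need patching, though neither is fatal. First, coercivity of $\tilde a$ on $\HH_D^1(\Omega)$ is \emph{false} when $\Gamma_D=\emptyset$, a case the paper explicitly allows: the infinitesimal rigid motions $\mathcal M$ lie in the kernel of $\eps{\cdot}$, so Korn's second inequality only yields a G\aa rding inequality $\tilde a(\pp,\pp)+\|\pp\|_\rho^2\gtrsim\|\pp\|_{1,\Omega}^2$. This still suffices: your energy identity controls $\|\dot{\tilde\pp}(t)\|_\rho$ and $\|\eps{\tilde\pp}(t)\|_\Omega$, and $\|\tilde\pp(t)\|_\Omega\le\int_0^t\|\dot{\tilde\pp}(\tau)\|_\Omega\,\mathrm d\tau$ from the zero initial data, at the price of $T$-dependent constants --- which the paper's own final bound $\|\pp(t)\|_\Omega\le(T-t)\int_t^T\|\bff f(\tau)\|_\Omega\,\mathrm d\tau$ also carries. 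Second, ``invoking the abstract theory'' for classical solvability must be matched to the data regularity actually available: $\bff f\in\mathcal U=\CC^0([0,T];\HH_D^1(\Omega))$ is continuity with values in the \emph{form domain}, which is precisely the cosine-family condition (data continuous with values in the Kisy\'nski space, Travis--Webb/Fattorini) guaranteeing a classical solution and thereby legitimizing the test function $\ww=\dot{\tilde\pp}(t)$ in your energy step; mere continuity in $\LL^2$ would only give a mild solution. This is the exact counterpart of the paper's hypothesis $F\in\mathcal C([0,T];D(A))$, so with the citation fixed your proof closes.
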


\begin{proof}
As with Theorem \ref{thm:1}, the proof of Theorem \ref{thm:2} can be found in Appendix \ref{sup:sec:Pfs}.
\end{proof}
  
We note that $j:\mathcal Z\to \mathbb R$ is a continuous quadratic functional, and therefore it is Fr\'echet and G\^{a}teaux differentiable.  
Now {, for $z, y \in \mathcal Z$,} we investigate the G\^{a}teaux derivative 
\begin{equation} \label{eq:grad}
\langle j'(z),y\rangle := \int_0^T ((Sz-\uu_d)(t), Sy(t))_\rho \mathrm d t + \alpha \int_0^T \langle \dot{z}(t), \dot{y}(t)\rangle_\Gamma \mathrm d t.
\end{equation}

\begin{proposition} \label{prop:4.6}
The G\^{a}teaux derivative of $j$ at $z$ in the direction $y\in \mathcal Z$ is
\[
\langle j'(z),y\rangle = \int_0^T \langle R(Sz-\mathbf u_d)(t),y(t) \rangle_\Gamma \mathrm d t + \alpha \int_0^T \langle \dot{z}(t), \dot{y}(t)\rangle_\Gamma \mathrm d t.
\]
\end{proposition}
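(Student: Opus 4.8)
The plan is to take the Gâteaux derivative in the form already displayed in \eqref{eq:grad},
\[
\langle j'(z),y\rangle = \int_0^T ((Sz-\uu_d)(t), Sy(t))_\rho\, \mathrm dt + \alpha\int_0^T\langle\dot z(t),\dot y(t)\rangle_\Gamma\, \mathrm dt,
\]
and to prove the proposition by replacing the first (volume) term with the boundary term involving $R$. Comparing with the asserted expression, it suffices to establish the adjoint identity
\[
\int_0^T ((Sz-\uu_d)(t), Sy(t))_\rho\, \mathrm dt = \int_0^T \langle R(Sz-\uu_d)(t), y(t)\rangle_\Gamma\, \mathrm dt .
\]
To set this up I would put $\mathbf f := Sz-\uu_d \in \mathcal U$, so that Theorem~\ref{thm:2} furnishes a unique adjoint pair $(\pp,\xi)$ solving \eqref{eq:adj} with $R\mathbf f=\gamma\xi$, and I would write $(\uu,\psi)$ for the state pair associated with the control $y$, so that $\uu=Sy$ by Theorem~\ref{thm:1}. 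Since $R\mathbf f(t)=\gamma\xi(t)\in H^{1/2}(\Gamma)\subset L^2(\Gamma)$ and $y(t)\in L_0^2(\Gamma)$, both factors lie in $L^2(\Gamma)$ and the right-hand pairing coincides with $\int_0^T\langle y(t),\gamma\xi(t)\rangle_\Gamma\,\mathrm dt$. The whole proposition thus reduces to the single identity
\[
\int_0^T (\mathbf f(t),\uu(t))_\rho\, \mathrm dt = \int_0^T \langle y(t),\gamma\xi(t)\rangle_\Gamma\, \mathrm dt .
\]

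The second step is to split the elastic and electrostatic content of each problem by testing with $(\ww,\bff 0)$ and $(\bff 0,\varphi)$ separately. From \eqref{eq:stateA} this yields the state relations $\langle\ddot\uu,\ww\rangle_\rho+(\sigma(\uu,\psi),\eps{\ww})_\Omega=0$ and $(\dd(\uu,\psi),\nabla\varphi)_\Omega=\langle y,\gamma\varphi\rangle_\Gamma$; applying Betti's formula to \eqref{eq:adj} and using the homogeneous conditions $\gamma_N\sigma(\pp,\xi)=\bff 0$ and $\dd(\pp,\xi)\cdot\nn=0$ to annihilate the boundary terms yields the adjoint relations $\langle\ddot\pp,\ww\rangle_\rho+(\sigma(\pp,\xi),\eps{\ww})_\Omega=(\mathbf f,\ww)_\rho$ and $(\dd(\pp,\xi),\nabla\varphi)_\Omega=0$. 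I would then cross-test, all choices being admissible because $\pp(t),\uu(t)\in\HH_D^1(\Omega)$, $\xi(t)\in H_G^1(\Omega)$ and $\psi(t)\in H^1(\Omega)$: take $\ww=\uu$ in the adjoint elastic relation to read off $(\mathbf f,\uu)_\rho$, rewrite the resulting stress term $(\sigma(\pp,\xi),\eps{\uu})_\Omega$ through the state elastic relation at $\ww=\pp$, convert the appearing $\EE$-couplings via $(\EE^\top\mathrm A)\cdot\mathbf b=\mathrm A:(\EE\mathbf b)$, and use the adjoint electrostatic relation at $\varphi=\psi$ to replace $(\EE^\top\eps{\pp},\nabla\psi)_\Omega$ by $(\KK\nabla\psi,\nabla\xi)_\Omega$. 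Together with the symmetries of $\CC$ and $\KK$, all stiffness and coupling contributions collapse to exactly the electrostatic state quantity $(\dd(\uu,\psi),\nabla\xi)_\Omega=\langle y,\gamma\xi\rangle_\Gamma$, leaving only the inertial discrepancy $\int_0^T[\langle\ddot\pp,\uu\rangle_\rho-\langle\ddot\uu,\pp\rangle_\rho]\,\mathrm dt$.

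The final step disposes of that inertial discrepancy by integrating by parts in time. The regularity in \eqref{eq:state} and \eqref{eq:adj} makes every term meaningful: $\ddot\uu\in\CC^0([0,T];\HH_D^{-1}(\Omega))$ pairs with $\pp\in\CC^0([0,T];\HH_D^1(\Omega))$, while $\ddot\pp,\dot\uu,\dot\pp$ all lie in $\CC^0([0,T];\LL_\rho^2(\Omega))$, so the two symmetric bulk terms $(\dot\uu,\dot\pp)_\rho$ cancel and only the temporal endpoint term $[(\dot\pp,\uu)_\rho-(\dot\uu,\pp)_\rho]_0^T$ remains; this vanishes since the state carries $\uu(0)=\dot\uu(0)=\bff 0$ and the adjoint carries $\pp(T)=\dot\pp(T)=\bff 0$. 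This gives the reduced identity and hence the stated formula for $\langle j'(z),y\rangle$.

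I expect the main obstacle to be the piezoelectric coupling: the bilinear form $a$ is \emph{not} symmetric, its cross terms $(\EE\nabla\psi,\eps{\ww})_\Omega$ and $-(\EE^\top\eps{\uu},\nabla\varphi)_\Omega$ carrying opposite signs. Consequently the usual shortcut of subtracting the two cross-tested wave identities does not eliminate the coupling; the decisive device is to bring in \emph{both} electrostatic constraints, each potential tested against the other problem's potential, so that the $\EE$ and $\KK$ terms reorganize and cancel and only the clean inertial pairing — which the endpoint conditions kill — is left over. A secondary, more routine, concern is to verify the space--time regularity carefully, so that the duality pairings may be treated as genuine $\LL_\rho^2(\Omega)$ inner products where needed and the temporal integration by parts is justified.
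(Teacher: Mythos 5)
Your proof is correct and follows essentially the same route as the paper: introduce the adjoint pair $(\pp,\xi)$ for $\mathbf f = Sz-\uu_d$ and the state pair driven by $y$, reduce everything to the transposition identity $\int_0^T(\mathbf f(t),Sy(t))_\rho\,\mathrm dt=\int_0^T\langle y(t),\gamma\xi(t)\rangle_\Gamma\,\mathrm dt$, and dispose of the inertial terms by integrating by parts in time against the zero initial conditions of the state and final conditions of the adjoint. The only difference is presentational: where the paper writes a single ``transposed'' weak identity, tests it with $(\pp,\xi)$, and then uses Betti's formula together with the homogeneous adjoint Neumann conditions, you split each system into its elastic and electrostatic relations and cancel the $\EE$- and $\KK$-couplings explicitly via the tensor symmetries --- the same computation in different bookkeeping.
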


\begin{proof}
Let  $(\pp, \xi)$ be the solution to the adjoint equation \eqref{eq:adj} with data $\bff f:= \uu - \uu_d =Sz-\uu_d\in \CC^0([0,T];\HH_D^1(\Omega))$ so that $\gamma\xi=R(Sz-\uu_d)$. Let also $(\ww,\eta)$ be the solution to \eqref{eq:state} with data $y$, so that $Sy=\ww$. If we prove that
\begin{equation}
\langle \ddot{\ww}(t), \pp(t) \rangle_\rho - (\ww(t), \rho \ddot{\pp}(t))_\Omega + (\ww(t), \bff f(t))_\rho = \langle y(t), \gamma \xi(t)\rangle_\Gamma, \label{lem:4.1}
\end{equation}
it follows that
\begin{alignat*}{4}
\int_0^T \big((\uu - \uu_d)(t), \ww(t))_\rho \mathrm d t &= \int_0^T \Big((\ww(t), \rho \ddot{\pp}(t))_\Omega - \langle \ddot{\ww}(t), \pp(t)\rangle_\rho \mathrm + \langle y(t), \gamma \xi(t) \rangle_\Gamma\Big) \mathrm d t\\
& = \int_0^T \langle y(t), \gamma \xi(t) \rangle_\Gamma \mathrm d t,
\end{alignat*}
where we are able to eliminate the terms with two time derivatives by integrating by parts and using \eqref{eq:stateB} and \eqref{eq:adjG}. This reconciles the direct expression for the G\^ateaux derivative \eqref{eq:grad} with the formula given in the statement of the Proposition.

To show \eqref{lem:4.1}, we begin by  using integration by parts to see that for all $(\mathbf v, \varphi) \in \HH_D^1(\Omega) \times H^1(\Omega)$, we have 
\[
\langle \ddot{\ww}(t), \mathbf v \rangle_\rho + (\eps{\ww(t)}, \sigma(\mathbf v, \varphi))_\Omega + (\nabla \eta(t), \mathbf d(\mathbf v, \varphi))_\Omega = \langle y(t), \gamma \varphi \rangle_\Gamma.
\]
Testing with solution $(\pp(t) , \xi(t))$, we have 
\begin{equation} \label{eq:4.2}
\langle \ddot{\ww}(t), \pp(t)\rangle_\rho + (\eps{\ww(t)}, \sigma(\pp(t), \xi(t)))_\Omega + (\nabla \eta(t), \dd(\pp(t), \xi(t)))_\Omega = \langle y(t), \gamma \xi(t) \rangle_\Gamma.
\end{equation}
Noting that 
\[
(\nabla \eta(t), \mathbf d(\pp(t), \xi(t))))_\Omega = - (\eta(t),\nabla \cdot \mathbf d(\pp(t), \xi(t))_\Omega + \langle \mathbf d(\pp(t), \xi(t))\cdot \nn, \eta(t) \rangle_\Gamma = 0,
\]
we see that \eqref{eq:4.2} is equivalent to 
\[
\langle \ddot{\ww}(t), \pp(t) \rangle_\rho - (\ww(t), \ddiv \sigma(\pp(t), \xi(t)))_\Omega  = \langle y(t), \gamma \xi(t) \rangle_\Gamma,
\]
and this is equivalent to \eqref{lem:4.1}, which finishes the proof.
\end{proof}

Note that, implicitly, we have proved that
\begin{equation}\label{lem:4.1real}
\int_0^T (\bff f(t),Sy(t))_\rho\mathrm dt
=\int_0^T \langle R\bff f(t),y(t)\rangle_\Gamma \mathrm dt
\qquad \forall\bff f\in \mathcal U, \quad y\in \mathcal Z.
\end{equation}
Proposition \ref{prop:4.6} implies
that the first order optimality conditions for the control problem \eqref{eq:cont2} 
\begin{equation} \label{eq:contOpt}
\bar{z} \in \mathcal Z_\mathrm{ad} \qquad \qquad \langle j'(\bar{z}), z - \bar{z} \rangle \geq 0 \quad \forall z \in \mathcal Z_\mathrm{ad},
\end{equation}
can be written as the search for $(\uu,\beta, \bar{z}) \in \mathcal U \times \mathcal C^1([0,T];L^2(\Gamma)) \times \mathcal Z_\mathrm{ad}$ satisfying
\begin{alignat*}{4}
&\uu = S\bar{z}, \\
	& \beta = R(\uu - \uu_d), \\
	&\int_0^T \langle \beta(t), z(t) - \bar{z}(t)  \rangle \mathrm d t + \alpha \int_0^T \langle \dot{\bar{z}}(t), \dot{z}(t) - \dot{\bar{z}}(t) \rangle_\Gamma \mathrm d t \geq 0, \qquad \forall z  \in \mathcal Z_\mathrm{ad}. 
\end{alignat*}

\subsection{The semidiscrete model}

Similar to the previous section, we here state some properties and theorems related to the semidiscrete control problem introduced in Section \ref{sec:SCP}. 

\begin{theorem} \label{thm:stateh}
If $z \in \mathcal Z$, then \eqref{eq:SDstate} has a unique solution that satisfies the bounds 
\[
\|\uu_h(t)\|_{1,\Omega} + \|\psi_h(t)\|_{1,\Omega} \lesssim \int_0^t \|z(\tau)\|_\Gamma \mathrm d \tau + \int_0^t \|\dot{z}(\tau) \|_\Gamma \mathrm d \tau.
\] 
Therefore $S_h:\mathcal Z \to\mathcal U$ is uniformly bounded.
\end{theorem}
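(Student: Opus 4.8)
The plan is to exploit the fact that the semidiscrete problem \eqref{eq:SDstate} has exactly the same variational structure as the continuous state equation \eqref{eq:state}, the only change being that the pair of spaces $\HH_D^1(\Omega)\times H_G^1(\Omega)$ is replaced by the conforming subspaces $\bff V_h\times W_h^G$. Since $\bff V_h\subset\HH_D^1(\Omega)$ and $W_h^G\subset H_G^1(\Omega)$, every functional inequality used in the analysis of the continuous problem --- the coercivity bounds for $\CC$ and $\KK$, Korn's inequality on $\HH_D^1(\Omega)$, the Deny--Lions norm equivalence $\|\nabla\varphi\|_\Omega\approx\|\varphi\|_{1,\Omega}$ on $H_G^1(\Omega)$, and the trace bound $\|\gamma\varphi\|_\Gamma\lesssim\|\varphi\|_{1,\Omega}$ --- holds verbatim on the subspace with the same constant. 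This is precisely the source of the $h$-independence asserted by ``uniformly bounded'': the constant hidden in $\lesssim$ depends only on $\CC$, $\EE$, $\KK$, $\rho$, $\Omega$ and $T$, and never on the choice of $\bff V_h$ or $W_h^G$.

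First I would settle existence and uniqueness, which is in fact easier than in the continuous case because space is now finite dimensional. Testing \eqref{eq:SDstate} with $(\mathbf 0,\varphi)$ isolates the elliptic constraint
\[
(\KK\nabla\psi_h(t),\nabla\varphi)_\Omega=(\EE^\top\eps{\uu_h(t)},\nabla\varphi)_\Omega-\langle z(t),\gamma\varphi\rangle_\Gamma\qquad\forall\varphi\in W_h^G,
\]
which, by coercivity of $\KK$ and Lax--Milgram on $W_h^G$, has a unique solution and defines a bounded affine map $\psi_h(t)=L_h\uu_h(t)+M_h z(t)$. Substituting this into the test against $(\ww,\mathbf 0)$ turns the momentum balance into a linear second-order ODE $\mathbf M\ddot{\vec u}+\mathbf K\vec u=\vec F(t)$ in the coordinates of $\bff V_h$, with symmetric positive-definite mass matrix $\mathbf M$ coming from $(\rho\,\punto,\punto)_\Omega$ and forcing $\vec F$ built from $M_h z$. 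Since $z\in\mathcal Z=H^1(0,T;L^2_0(\Gamma))\hookrightarrow\CC^0$, the forcing is continuous, so standard ODE theory gives a unique $\uu_h\in\CC^2([0,T];\bff V_h)$ with $\uu_h(0)=\dot{\uu}_h(0)=\mathbf 0$, and then $\psi_h=L_h\uu_h+M_h z\in\CC^0([0,T];W_h^G)$.

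The heart of the matter is the $h$-independent a priori bound, for which I would run the natural energy argument. Testing the momentum equation with $(\dot{\uu}_h,\mathbf 0)$ and using the identity $(\EE\nabla\psi_h,\eps{\dot{\uu}_h})_\Omega=(\EE^\top\eps{\dot{\uu}_h},\nabla\psi_h)_\Omega$, then differentiating the constraint in time and testing it with $\varphi=\psi_h$ to rewrite the piezoelectric cross term, I expect the mixed terms to telescope and leave the clean identity
\[
\tfrac{d}{dt}E(t)=-\langle\dot{z}(t),\gamma\psi_h(t)\rangle_\Gamma,\qquad
E(t):=\tfrac12\|\dot{\uu}_h(t)\|_\rho^2+\tfrac12(\CC\eps{\uu_h(t)},\eps{\uu_h(t)})_\Omega+\tfrac12(\KK\nabla\psi_h(t),\nabla\psi_h(t))_\Omega.
\]
Because $\uu_h(0)=\dot{\uu}_h(0)=\mathbf 0$ and $z(0)=0$ force $\psi_h(0)=\mathbf 0$ and hence $E(0)=0$, integration gives $E(t)=-\int_0^t\langle\dot{z},\gamma\psi_h\rangle_\Gamma$. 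Bounding the right-hand side by the trace and Deny--Lions inequalities yields $E(t)\lesssim\int_0^t\|\dot{z}(\tau)\|_\Gamma\,E(\tau)^{1/2}\,\mathrm d\tau$, so the square-root Gronwall lemma gives $E(t)^{1/2}\lesssim\int_0^t\|\dot{z}(\tau)\|_\Gamma\,\mathrm d\tau$. Korn's inequality together with coercivity of $\CC$ then controls $\|\uu_h(t)\|_{1,\Omega}$, and testing the undifferentiated constraint with $\psi_h$ controls $\|\psi_h(t)\|_{1,\Omega}$ by $\|\eps{\uu_h(t)}\|_\Omega+\|z(t)\|_\Gamma$; since $z(0)=0$ gives $\|z(t)\|_\Gamma\le\int_0^t\|\dot{z}\|_\Gamma$, both pieces are dominated by the right-hand side of the stated bound. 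Integrating in time and using Cauchy--Schwarz finally yields $\triple{S_h z}{\mathcal U}\lesssim\triple{z}{\mathcal Z}$ uniformly in $h$.

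The step I expect to be the main obstacle is making the energy identity rigorous, since a priori we only know $\psi_h\in\CC^0$, whereas the argument differentiates the constraint in time and pairs $\dot{\psi}_h$ against $\nabla\psi_h$. The clean way around this is to note that $\dot{\psi}_h=L_h\dot{\uu}_h+M_h\dot{z}$ with $\dot{\uu}_h\in\CC^1$ and $\dot{z}\in L^2(0,T;L^2_0(\Gamma))$, so $\dot{\psi}_h\in L^2$ and the energy identity holds in an integrated, almost-everywhere sense; alternatively one first proves the estimate for smooth $z$, where all manipulations are licit, and extends to $z\in\mathcal Z$ by density and the (already established) boundedness of $S_h$.
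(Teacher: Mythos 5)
Your proof is correct, but it follows a genuinely different route from the paper's. The paper proves this theorem by recycling its semigroup argument for the continuous state equation (Theorem \ref{thm:1}, Appendix \ref{sup:sec:Pfs}): the electric potential is eliminated via discrete analogues of the elliptic solution operators $M_\Omega$ and $M_\Gamma$ and a discrete divergence, the system is rewritten in first-order form $\dot U = AU + F$, and one checks that $(AU,U)_{\mathbb H}=0$ with $I\pm A$ surjective, so that $A$ generates a $\CC_0$-group of isometries; the stated bound then comes from Duhamel estimates on $U$ (producing the $\int_0^t\|z\|_\Gamma$ term) and on $\dot U$ (producing the $\int_0^t\|\dot z\|_\Gamma$ term), exactly as at the continuous level. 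You instead exploit finite dimensionality directly: Lax--Milgram on $W_h^G$ to get the affine map $\psi_h = L_h\uu_h + M_h z$, reduction to a linear second-order ODE system with continuous forcing for existence and uniqueness, and then an energy identity $\frac{d}{dt}E(t) = -\langle \dot z(t),\gamma\psi_h(t)\rangle_\Gamma$ closed by the square-root Gronwall lemma. I verified your energy identity (testing the momentum equation with $\dot\uu_h$, differentiating the constraint in time and testing with $\psi_h$ makes the piezoelectric cross terms cancel, since $\EE$ and $\EE^\top$ are transposes and $\KK$ is symmetric), and $E(0)=0$ does follow since $z(0)=0$ forces $\psi_h(0)=0$. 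Your route buys elementarity and makes the $h$-uniformity completely transparent (every constant --- coercivity of $\CC$ and $\KK$, Korn, Deny--Lions, trace --- is inherited from the continuous spaces by conformity); it even yields a slightly sharper bound with only $\int_0^t\|\dot z\|_\Gamma$ on the right. The paper's route buys a single proof that covers the continuous and semidiscrete problems simultaneously, which is why its stated proof is one sentence long, and it delivers the $\CC^2([0,T];\bff V_h)$ regularity and the $\dot U$ bound as part of the same machinery rather than as add-ons.

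Two small points to tighten. First, your phrase ``Korn's inequality together with coercivity of $\CC$ then controls $\|\uu_h(t)\|_{1,\Omega}$'' silently assumes Korn's first inequality; when $\Gamma_D = \emptyset$ you need Korn's second inequality, which requires $\|\uu_h(t)\|_\Omega$ as well --- obtainable from $\uu_h(0)=\mathbf 0$ via $\|\uu_h(t)\|_\Omega \le \int_0^t \|\dot\uu_h(\tau)\|_\Omega\,\mathrm d\tau \lesssim T\int_0^t\|\dot z(\tau)\|_\Gamma\,\mathrm d\tau$, at the cost of a $T$-dependent (but $t$- and $h$-independent) constant, which is what the paper's $\lesssim$ convention permits. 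Second, your handling of the regularity obstacle is sound: since $\dot\psi_h = L_h\dot\uu_h + M_h\dot z \in L^2(0,T;W_h^G)$, the differentiated constraint and the chain rule for $\frac{d}{dt}\frac12(\KK\nabla\psi_h,\nabla\psi_h)_\Omega$ hold almost everywhere, $E$ is absolutely continuous, and the identity holds in integrated form; the density-of-smooth-$z$ alternative you mention also works.
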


\begin{proof}
Everything follows as in the proof to Theorem \ref{thm:1} in Appendix \ref{sup:sec:Pfs} 
after defining discrete versions of $M_\Omega, M_\Gamma$ and the divergence operator. The details are very similar to what can be found in \cite{BrSaSa2017}.
\end{proof}

Statements (a)-(d) of  Theorem \ref{prop:wlsc} still hold for $S_h, \mathcal Z_\mathrm{ad}^h$, and $j_h$, as does the conclusion, so with an appropriate change of notation we have the following. 

\begin{theorem}
There exists a unique solution to the semidiscrete control problem 
\begin{equation} \label{eq:semCont}
j_h(z_h) = \min! \qquad z_h \in \mathcal Z_\mathrm{ad}^h.
\end{equation}
\end{theorem}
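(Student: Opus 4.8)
The plan is to mirror the argument already used for the continuous control problem in Theorem \ref{prop:wlsc}, invoking the same convex-optimization machinery after verifying that its four hypotheses survive the semidiscretization. Concretely, I would check that the semidiscrete reduced functional $j_h$ attains a unique minimizer over $\mathcal Z_\mathrm{ad}^h$ by confirming: linearity and boundedness of $S_h$; closedness and convexity of $\mathcal Z_\mathrm{ad}^h$; continuity, strict convexity, and hence weak lower semicontinuity of $j_h$; and coercivity of $j_h$. Once these are in place, the abstract existence-and-uniqueness result from \cite[Section 7.4]{Ciarlet1982} applies verbatim.

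Step by step, I would first record that $S_h:\mathcal Z\to\mathcal U_h\subset\mathcal U$ is linear and uniformly bounded, which is exactly the content of Theorem \ref{thm:stateh}; linearity is immediate from the linearity of the semidiscrete state equation \eqref{eq:SDstate} in the data $z$. Second, I would observe that $\mathcal Z_\mathrm{ad}^h=\mathcal Z_h\cap\mathcal Z_\mathrm{ad}$ is the intersection of a closed subspace (noted in the text to be closed) with the closed convex set $\mathcal Z_\mathrm{ad}$, so it is itself closed and convex, hence weakly closed. I would also note that $\mathcal Z_\mathrm{ad}^h\neq\emptyset$ since $0\in\mathcal Z_h$ and $0\in\mathcal Z_\mathrm{ad}$ (using $z_a\le 0\le z_b$ and $\mathcal P_0(\Gamma_h)$ containing constants). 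Third, since $j_h(z)=\tfrac12\triple{S_h z-\uu_d}{\rho}^2+\tfrac{\alpha}{2}\triple{z}{\mathcal Z}^2$ is a continuous quadratic functional with $S_h$ bounded and $\alpha>0$, it is convex, and strictly convex in $z$ because the second term is strictly convex in $\triple{\cdot}{\mathcal Z}$; continuity together with convexity yields weak lower semicontinuity. Fourth, coercivity follows from the bound $j_h(z)\ge\tfrac{\alpha}{2}\triple{z}{\mathcal Z}^2$, so $j_h(z)\to\infty$ as $\triple{z}{\mathcal Z}\to\infty$.

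With all four properties established over the nonempty closed convex set $\mathcal Z_\mathrm{ad}^h$, the standard theorem on minimization of a coercive, strictly convex, weakly lower semicontinuous functional on a weakly closed subset of a Hilbert space gives both existence and uniqueness of the minimizer of \eqref{eq:semCont}. I would emphasize that every constant hidden in the boundedness of $S_h$ is independent of $h$ by Theorem \ref{thm:stateh}, but this uniformity is not actually needed for existence or uniqueness at fixed $h$; it matters only later for convergence.

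I do not expect any genuine obstacle here: the entire difficulty of the problem has already been discharged in establishing the boundedness of $S_h$ (Theorem \ref{thm:stateh}), and the remainder is the routine transcription of the continuous argument. The only point requiring a moment's care is confirming that $\mathcal Z_\mathrm{ad}^h$ is nonempty and that intersecting with the finite-dimensional-in-space constraint $\mathcal Z_h$ preserves convexity and closedness; since $\mathcal Z_h$ is a closed subspace this is immediate. Accordingly, the proof reduces to the single sentence that statements (a)–(d) of Theorem \ref{prop:wlsc} transfer to $S_h$, $\mathcal Z_\mathrm{ad}^h$, and $j_h$, after which the same convex-optimization conclusion applies.
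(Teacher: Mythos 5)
Your proposal is correct and follows essentially the same route as the paper, which simply notes that statements (a)--(d) of Theorem \ref{prop:wlsc} carry over to $S_h$, $\mathcal Z_\mathrm{ad}^h$, and $j_h$ and then invokes the same convex-optimization result from \cite[Section 7.4]{Ciarlet1982}. Your additional checks (nonemptiness of $\mathcal Z_\mathrm{ad}^h$ via $0$, closedness of the intersection with the closed subspace $\mathcal Z_h$, strict convexity from the $\frac{\alpha}{2}\triple{z}{\mathcal Z}^2$ term) merely make explicit what the paper leaves to the reader.
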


Using the same notation as with \eqref{eq:SDstate}, we state the semidiscrete version of the adjoint equation \eqref{eq:adj} as well as give a well-posedness result. 
\begin{theorem} \label{thm:adjh}
For $\mathbf f \in \mathcal U$ and every $t \in [0,T]$, the problem
\begin{subequations} \label{eq:SDadj}
\begin{alignat}{3}
& (\pp_h,\xi_h)\in 
\mathcal C^2([0,T];\bff V_h)\times \mathcal C^0([0,T];W_h^G),\\
& (\rho \ddot{\pp}_h(t), \ww)_\Omega + a((\pp_h(t), \xi_h(t)), (\ww, \varphi)) = & (\rho \bff f(t), \ww)_\Omega \quad \forall (\ww, \varphi) \in \bff V_h \times W_h^G,\\
&\pp_h(T) = \bff 0, \qquad \dot{\pp}_h(T) = \bff 0,
\end{alignat}
\end{subequations}
is uniquely solvable and we have the estimate
\begin{alignat*}{4}
\|\pp_h(t)\|_{1,\Omega} + \|\xi_h(t)\|_{1,\Omega} & \lesssim \int_t^T \|\bff f(\tau)\|_\Omega \mathrm d \tau.
\end{alignat*}
Therefore, the operator $R_h:\mathcal U \to \mathcal X$ given by $R_h\mathbf f=\gamma\xi_h$, where $(\pp_h,\xi_h)$ solve \eqref{eq:SDadj}, is uniformly bounded.
\end{theorem}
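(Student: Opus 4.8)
The plan is to treat \eqref{eq:SDadj} as a finite-dimensional second order system which, after a reversal of time, is structurally identical to the semidiscrete state equation \eqref{eq:SDstate}, so that the argument behind Theorem \ref{thm:stateh} applies almost verbatim. Concretely, I would set $\tilde\pp_h(s):=\pp_h(T-s)$ and $\tilde\xi_h(s):=\xi_h(T-s)$; since $\tfrac{d^2}{ds^2}\tilde\pp_h(s)=\ddot\pp_h(T-s)$, the terminal data $\pp_h(T)=\dot\pp_h(T)=\bff 0$ become homogeneous initial data at $s=0$, and the forcing $(\rho\bff f(t),\ww)_\Omega$ becomes $(\rho\bff f(T-s),\ww)_\Omega$. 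The only structural difference from \eqref{eq:SDstate} is that the data now enter as a volume load on the elastic component rather than as a boundary flux on the electric component, which if anything simplifies the energy balance.

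For existence, uniqueness and regularity I would first eliminate the electric unknown. For frozen $\pp_h(t)$, the second block $(\KK\nabla\xi_h,\nabla\varphi)_\Omega=(\EE^\top\eps{\pp_h},\nabla\varphi)_\Omega$ for all $\varphi\in W_h^G$ is uniquely solvable because $\KK$ is coercive and, since $W_h^G\subset H_G^1(\Omega)$, the Deny--Lions norm equivalence $\|\nabla\cdot\|_\Omega\approx\|\cdot\|_{1,\Omega}$ holds on $W_h^G$ with an $h$-independent constant. This defines a bounded linear map $\pp_h\mapsto\xi_h$ and reduces the first block to a linear second order ODE $M\ddot\pp_h+\widetilde K\pp_h=M\bff f$, with $M$ the symmetric positive definite mass matrix associated with $(\rho\,\cdot,\cdot)_\Omega$ on $\bff V_h$. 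Standard linear ODE theory then yields a unique solution with $\pp_h\in\mathcal C^2([0,T];\bff V_h)$ because $\bff f\in\mathcal U$ is continuous, and $\xi_h\in\mathcal C^0([0,T];W_h^G)$ follows from continuity of the solution map of the elliptic problem.

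The estimate is the heart of the matter, and I would obtain it from an energy identity. Testing the first block with $\dot\pp_h(t)$ and differentiating the elliptic relation in time (then pairing with $\xi_h(t)$), the piezoelectric cross terms cancel through the transpose identity $(\EE\nabla\xi_h,\eps{\dot\pp_h})_\Omega=(\EE^\top\eps{\dot\pp_h},\nabla\xi_h)_\Omega=(\KK\nabla\dot\xi_h,\nabla\xi_h)_\Omega$, leaving
\[
\tfrac12\tfrac{d}{dt}\Big[(\rho\dot\pp_h,\dot\pp_h)_\Omega+(\CC\eps{\pp_h},\eps{\pp_h})_\Omega+(\KK\nabla\xi_h,\nabla\xi_h)_\Omega\Big]=(\rho\bff f,\dot\pp_h)_\Omega.
\]
Writing $E(t)$ for one half of the bracketed quantity (so that $\dot E(t)=(\rho\bff f,\dot\pp_h)_\Omega$) and noting $E(T)=0$ since the terminal data force $\eps{\pp_h(T)}=\bff 0$ and hence $\xi_h(T)=\bff 0$, I would integrate backward and use $|\tfrac{d}{dt}\sqrt{E}|\le\|\bff f\|_\rho/\sqrt2$ to obtain $\sqrt{E(t)}\lesssim\int_t^T\|\bff f(\tau)\|_\Omega\,d\tau$. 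Coercivity of $\CC$ and $\KK$ bound $\|\eps{\pp_h(t)}\|_\Omega$, $\|\dot\pp_h(t)\|_\Omega$ and $\|\nabla\xi_h(t)\|_\Omega$ by $\sqrt{E(t)}$; Korn's inequality on $\HH_D^1(\Omega)\supset\bff V_h$ (absorbing the $L^2$ part of $\pp_h$, itself controlled by integrating $\dot\pp_h$ in time) together with the Deny--Lions equivalence on $W_h^G$ then upgrade these to the claimed $\|\pp_h(t)\|_{1,\Omega}+\|\xi_h(t)\|_{1,\Omega}$ bound, and uniform boundedness of $R_h$ follows because every constant invoked ($c_0$, $k_0$, the Korn and Deny--Lions constants, and the bounds on $\rho$) is a property of the continuous spaces and therefore restricts to the subspaces unchanged.

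The main obstacle is producing the clean energy identity: unlike a symmetric form, $a$ is non-symmetric, and the cancellation of the piezoelectric coupling surfaces only after one differentiates the algebraic electric constraint in time and pairs it correctly with $\dot\pp_h$ and $\xi_h$. Once this identity is in hand, the backward Gr\"onwall step and the coercivity/Korn/Deny--Lions conversions are routine, and the explicit observation that all constants descend from the continuous spaces is what secures the $h$-uniformity of the bound.
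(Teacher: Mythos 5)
Your proof is correct, and it takes a genuinely different route from the paper's. The paper gives no standalone argument for Theorem~\ref{thm:adjh}: as with Theorem~\ref{thm:stateh}, it is obtained by rerunning on the finite-dimensional spaces the semigroup proof of Theorem~\ref{thm:2} from Appendix~\ref{sup:sec:Pfs} --- one defines discrete analogues of $M_\Omega$, $M_\Gamma$ and the divergence operator, rewrites the problem as a first-order system $\dot U = AU + F$ with time-reversed forcing $F(t)=(\bff f(T-t),0,\mathbf 0)$ in the elastic-velocity slot, uses that $A$ generates a $\CC_0$-group of isometries on the energy space $\mathbb H$ to obtain existence and the bounds, and then recovers $\pp_h$ as a time-reversed antiderivative and $\xi_h$ by the electric postprocessing. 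You instead eliminate $\xi_h$ outright through the discrete electric solve (whose uniform-in-$h$ solvability you correctly ground in the coercivity of $\KK$ and the Deny--Lions equivalence, both of which restrict from $H_G^1(\Omega)$ to $W_h^G$ with unchanged constants), reduce to a finite-dimensional linear ODE for existence, uniqueness and $\mathcal C^2$ regularity, and prove the estimate by a direct backward energy/Gr\"onwall argument; the cancellation you isolate --- differentiating the algebraic constraint in time and pairing so that $(\EE\nabla\xi_h,\eps{\dot\pp_h})_\Omega=\tfrac12\tfrac{\mathrm d}{\mathrm dt}(\KK\nabla\xi_h,\nabla\xi_h)_\Omega$ --- is precisely the finite-dimensional shadow of the paper's identity $(AU,U)_{\mathbb H}=0$, so both proofs run on the same energy structure. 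What each buys: yours is self-contained and elementary (semigroup machinery is superfluous in finite dimensions, and your explicit tracking of $c_0$, $k_0$, Korn and Deny--Lions constants makes the $h$-uniformity transparent), while the paper's recycled first-order framework buys uniformity of treatment --- one proof serves the continuous and semidiscrete, state and adjoint problems simultaneously, which is what lets Section~\ref{sec:conv} treat the state and adjoint error equations as literally the same system. Two harmless details: the $\tfrac{\mathrm d}{\mathrm dt}\sqrt{E}$ step needs the usual regularization at zeros of $E$ (e.g.\ replace $E$ by $E+\epsilon$), and your bound on $\|\pp_h(t)\|_\Omega$ obtained by integrating $\dot\pp_h$ carries a factor $(T-t)$, which is fine since hidden constants may depend on the fixed final time --- the paper's own continuous bound for $\|\pp(t)\|_\Omega$ exhibits the same factor.
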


\begin{proposition} \label{prop:4.6h}
The G\^{a}teaux derivative of $j_h(z)$ in the direction $y\in \mathcal Z$ is given by 
\[
\langle j_h'(z), y\rangle = 
\int_0^T \langle \beta_h(t), y(t)\rangle_\Gamma  \mathrm d t + \alpha \int_0^T \langle \dot{z}(t),\dot y(t)  \rangle_\Gamma \mathrm d t, 
\]
where $\beta_h = R_h(S_h z- \uu_d).$ 
\end{proposition}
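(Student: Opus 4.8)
The plan is to follow the argument of Proposition~\ref{prop:4.6} almost verbatim, exploiting the fact that the semidiscrete problem is already posed variationally, so no spatial integration by parts (Betti/Green identities) is needed and the pairings $\langle\cdot,\cdot\rangle_\rho$ collapse to genuine $L^2_\rho$ inner products. Since $j_h$ is a continuous quadratic functional and $S_h$ is linear and bounded (Theorem~\ref{thm:stateh}), its G\^ateaux derivative at $z$ in the direction $y$ is the direct expression
\[
\langle j_h'(z), y\rangle = \int_0^T ((S_h z - \uu_d)(t), S_h y(t))_\rho\,\mathrm dt + \alpha\int_0^T\langle\dot z(t),\dot y(t)\rangle_\Gamma\,\mathrm dt.
\]
Everything therefore reduces to establishing the semidiscrete counterpart of \eqref{lem:4.1real},
\[
\int_0^T(\mathbf f(t), S_h y(t))_\rho\,\mathrm dt = \int_0^T\langle R_h\mathbf f(t), y(t)\rangle_\Gamma\,\mathrm dt\qquad\forall\,\mathbf f\in\mathcal U,\ y\in\mathcal Z,
\]
and then specializing to $\mathbf f = S_h z - \uu_d$, for which $R_h\mathbf f = \beta_h$; note that $\langle y,\gamma\xi_h\rangle_\Gamma=\langle\beta_h,y\rangle_\Gamma$ once $\gamma\xi_h=\beta_h\in L^2(\Gamma)$, since the pairing is then the symmetric $L^2(\Gamma)$ inner product.

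To prove the identity I would let $(\ww_h,\eta_h)$ solve \eqref{eq:SDstate} with data $y$ (so $S_h y = \ww_h$) and let $(\pp_h,\xi_h)$ solve \eqref{eq:SDadj} with data $\mathbf f$ (so $R_h\mathbf f = \gamma\xi_h$). Because $\pp_h(t)\in\bff V_h$ and $\xi_h(t)\in W_h^G$, the pair $(\pp_h(t),\xi_h(t))$ is an admissible test function in \eqref{eq:SDstate}, and symmetrically $(\ww_h(t),\eta_h(t))$ is admissible in \eqref{eq:SDadj}. Testing each equation against the other's solution gives, for each $t$,
\begin{align*}
(\rho\ddot\ww_h,\pp_h)_\Omega + a((\ww_h,\eta_h),(\pp_h,\xi_h)) &= -\langle y,\gamma\xi_h\rangle_\Gamma,\\
(\rho\ddot\pp_h,\ww_h)_\Omega + a((\pp_h,\xi_h),(\ww_h,\eta_h)) &= (\rho\mathbf f,\ww_h)_\Omega.
\end{align*}

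The crux is that $a$ is \emph{not} symmetric: the two bilinear-form evaluations differ by the piezoelectric coupling terms. The key observation is that testing the elliptic (second) component of \eqref{eq:SDstate} with $\varphi=\xi_h\in W_h^G$ yields $(\mathbf d(\ww_h,\eta_h),\nabla\xi_h)_\Omega = \langle y,\gamma\xi_h\rangle_\Gamma$, while testing the elliptic component of \eqref{eq:SDadj} with $\varphi=\eta_h\in W_h^G$ yields $(\mathbf d(\pp_h,\xi_h),\nabla\eta_h)_\Omega = 0$. Substituting these into the difference of the two displayed equations (equivalently, rewriting $a$ via the symmetric form $(\eps{\cdot},\sigma(\cdot,\cdot))_\Omega + (\nabla\cdot,\mathbf d(\cdot,\cdot))_\Omega$ used in the proof of Proposition~\ref{prop:4.6}) cancels all coupling contributions and produces the pointwise identity
\[
(\ddot\ww_h(t),\pp_h(t))_\rho - (\ww_h(t),\rho\ddot\pp_h(t))_\Omega + (\ww_h(t),\mathbf f(t))_\rho = \langle y(t),\gamma\xi_h(t)\rangle_\Gamma,
\]
the exact analogue of \eqref{lem:4.1}. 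I expect this cancellation to be the only nontrivial point; it is precisely where the choice of $W_h^G$ as a common trial/test space for the potential is essential.

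Finally I would integrate this identity over $[0,T]$ and integrate by parts twice in time. The two second-order-in-time terms combine into a single boundary contribution that vanishes on account of the initial conditions $\ww_h(0)=\dot\ww_h(0)=\mathbf 0$ and the terminal conditions $\pp_h(T)=\dot\pp_h(T)=\mathbf 0$; the regularity $\ww_h,\pp_h\in\mathcal C^2([0,T];\bff V_h)$ makes this fully rigorous. What remains is $\int_0^T(\mathbf f,\ww_h)_\rho\,\mathrm dt = \int_0^T\langle y,\gamma\xi_h\rangle_\Gamma\,\mathrm dt$, the desired identity. Inserting $\mathbf f=S_hz-\uu_d$ and $\gamma\xi_h=\beta_h$ into the direct derivative formula then yields the stated expression for $\langle j_h'(z),y\rangle$.
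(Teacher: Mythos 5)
Your proof is correct and follows exactly the route the paper intends: the paper's own proof of this proposition simply says it mirrors Proposition~\ref{prop:4.6} with the transposition formula \eqref{lem:4.1realh} as the key step, and your argument supplies precisely that, replacing the spatial integration by parts of the continuous case with cross-testing of \eqref{eq:SDstate} and \eqref{eq:SDadj} (legitimate since both use the common space $\bff V_h\times W_h^G$), using the elliptic components to handle the nonsymmetry of $a$, and integrating by parts twice in time with the initial/terminal conditions. Nothing is missing; you have merely made explicit the details the paper leaves to the reader under ``can be proved with the same techniques.''
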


\begin{proof}
The proof is similar to the one for Proposition \ref{prop:4.6}. The key step is the transposition formula
\begin{equation}\label{lem:4.1realh}
\int_0^T (\bff f(t),S_hy(t))_\rho\mathrm dt
=\int_0^T \langle R_h\bff f(t),y(t)\rangle_\Gamma \mathrm dt
\qquad \forall\bff f\in \mathcal U, \quad y\in \mathcal Z.
\end{equation}
(compare with \eqref{lem:4.1real}), which can be proved with the same techniques. 
\end{proof}

\noindent It now follows that the semidiscrete optimality conditions consist of
\begin{equation} \label{eq:semOpt}
\bar{z}_h \in \mathcal Z_\mathrm{ad}^h \qquad \qquad  \langle j_h'(\bar{z}_h), z_h - \bar{z}_h \rangle \geq 0, \quad \forall z_h \in \mathcal Z_\mathrm{ad}^h, 
\end{equation}
or equivalently, finding $(\uu_h, \beta_h, \bar{z}_h) \in \mathcal U_h \times \mathcal C^0([0,T], \gamma W_h^G) \times \mathcal Z_\mathrm{ad}^h$ that solve the system
\begin{alignat*}{3}
&\uu_h = S_h \bar{z}_h, \\
&\beta_h = R_h(\uu_h - \uu_d),\\
&\int_0^T \langle \beta_h(t), z_h(t) - \bar{z}_h(t)  \rangle_\Gamma \mathrm d t + \alpha \int_0^T \langle \dot{\bar{z}}_h(t), \dot{z}_h(t) - \dot{\bar{z}}_h(t) \rangle_\Gamma \mathrm d t \geq 0 \qquad \forall z_h  \in \mathcal Z_\mathrm{ad}^h. 
\end{alignat*}
Here we are using the notation $\gamma W_h^G$ to be the space $\{ \gamma \varphi : \varphi \in W_h^G\}$.

\subsection{Gradient and projection}

As part of the needs to apply a projected gradient-type method, we have to introduce the gradient of the functional $j_h$ and the projection operator on the admissible set. We will only deal with them at the semidiscrete level, although all arguments below can be reproduced for the continuous problem.
 
Given $z_h\in \mathcal Z_{\mathrm{ad}}^h$, we consider $g_h\in \mathcal Z_h$ to be the only solution of
\[
\llbracket g_h,y_h\rrbracket_{\mathcal Z} = \langle j'_h(z_h),y_h\rangle\qquad \forall y_h\in \mathcal Z_h,
\]
where
\[
\llbracket g,y\rrbracket_{\mathcal Z} :=\int_0^T \langle \dot g(t),\dot y(t)\rangle_\Gamma \mathrm dt
\]
is the inner product associated to the norm in $\mathcal Z$. 

We finally introduce the best approximation operator $\mathcal Q:\mathcal Z_h \to \mathcal Z^h_{\mathrm{ad}}$ given by the solution of the quadratic problem with linear inequality constraints
\[
\triple{z_h-\mathcal Q z_h}{\mathcal Z}^2=\min!
\qquad \mathcal Qz_h\in \mathcal Z^h_{\mathrm{ad}}.
\]

\section{Convergence and error analysis}\label{sec:conv}

Now that the both the continuous and semidiscrete control problems have been stated and their respective properties explored, we can examine the error due to the semidiscretization in space. 

\subsection{Estimates for Galerkin semidiscretization}

We first examine the error in the approximation of the state and adjoint equations. The analysis is rendered easier if we introduce an elliptic projection associated to the bilinear form $a$. We consider the space $\mathcal M:=\{\mathbf m\in \mathbf H_D^1(\Omega)\,:\,\boldsymbol\varepsilon(\mathbf m)=0\}$.  This finite-dimensional space is: (a) the space of infinitesimal rigid motions (affine displacement fields with skew-symmetric gradient) if $\Gamma_D$ is trivial; (b) zero, otherwise. We assume $\mathcal M\subset \mathbf V_h$, which is an actual hypothesis only when $\Gamma_D$ is trivial. We then consider the orthogonal projection $\mathrm P:\mathbf L^2(\Omega) \to \mathcal M$ and the operator $\Pi:\mathbf H^1_D(\Omega)\times H^1_G(\Omega)\to \mathbf V_h \times W_h^G$ given by $\Pi(\mathbf u,\psi)=(\widehat\uu_h,\widehat\psi_h)$ being the only solution (see Lemma \ref{lemma:quasi}) of
\begin{subequations}\label{eq:proj}
\begin{alignat}{6}
& (\widehat\uu_h,\widehat\psi_h)\in \bff V_h\times W_h^G,\\
& a((\widehat\uu_h,\widehat\psi_h), (\ww, \varphi)) = a((\uu, \psi), (\ww, \varphi)) \qquad \forall (\ww, \varphi) \in \bff V_h \times W_h^G,\\
& \mathrm P \widehat\uu_h = \mathrm P \uu.
\end{alignat}
\end{subequations}
The best approximation operator on the product space $\bff V_h\times W_h^G$ can be decomposed as a pair of independent operators $\mathbf I_h:\HH_D^1(\Omega) \to \bff V_h$ and $I_h:H^1_G(\Omega)\to W_h^G$ satisfying
\[
\| \uu-\mathbf I_h\uu\|_{1,\Omega}
=\min_{\ww\in \bff V_h}\| \uu-\ww\|_{1,\Omega}
\qquad
\|\psi-I_h\psi\|_{1,\Omega}
=\min_{\varphi\in W_h^G} \|\psi-\varphi\|_{1,\Omega}
\]
for arbitrary $\uu$ and $\psi$. 

\begin{lemma}\label{lemma:quasi}
The equations \eqref{eq:proj} are uniquely solvable and, therefore, the projection $\Pi$ is well-defined. Moreover, $\Pi$ is quasioptimal, i.e.,
\[
\| (\uu,\psi)-\Pi(\uu,\psi)\|_{1,\Omega}
\lesssim \| \uu-\mathbf I_h\uu\|_{1,\Omega}+\|\psi-I_h\psi\|_{1,\Omega}.
\]
\end{lemma}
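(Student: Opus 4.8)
The plan is to reduce both assertions to coercivity of $a$ on a suitable subspace; the only real subtlety is the treatment of rigid body motions when $\Gamma_D$ is trivial. The starting point is the algebraic observation that the piezoelectric cross terms cancel on the diagonal. Indeed, the defining relation $(\EE^\top\mathrm A)\cdot\mathbf b=\mathrm A:(\EE\mathbf b)$ gives $(\EE\nabla\varphi,\eps{\ww})_\Omega=(\EE^\top\eps{\ww},\nabla\varphi)_\Omega$ for every pair, so that $a((\ww,\varphi),(\ww,\varphi))=(\CC\eps{\ww},\eps{\ww})_\Omega+(\KK\nabla\varphi,\nabla\varphi)_\Omega\ge c_0\|\eps{\ww}\|_\Omega^2+k_0\|\nabla\varphi\|_\Omega^2$. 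This only controls a seminorm, so the heart of the matter is to upgrade it to the full $\HH^1\times H^1$ norm. For the potential this is immediate from the Deny--Lions inequality $\|\varphi\|_{1,\Omega}\lesssim\|\nabla\varphi\|_\Omega$ on $H_G^1(\Omega)$ already invoked above. For the displacement I would use Korn's inequality in the form $\|\ww\|_{1,\Omega}\lesssim\|\eps{\ww}\|_\Omega$, valid for $\ww\in\HH_D^1(\Omega)$ satisfying $\mathrm P\ww=\mathbf 0$: when $\Gamma_D$ is nontrivial one has $\mathcal M=\{0\}$ and this is just Korn's second inequality, whereas when $\Gamma_D$ is trivial the condition $\mathrm P\ww=\mathbf 0$ removes precisely the kernel $\mathcal M$ of $\eps{\punto}$ and the quotient form of Korn applies. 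Both constants live at the continuous level, hence are independent of $h$.

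For unique solvability I would exploit the hypothesis $\mathcal M\subset\bff V_h$ to split off the rigid motions. Writing $\bff V_h^0:=\{\ww\in\bff V_h:\mathrm P\ww=\mathbf 0\}$, one has $\bff V_h=\mathcal M\oplus\bff V_h^0$, and because $\eps{\mathbf m}=0$ for $\mathbf m\in\mathcal M$ the bilinear form satisfies $a((\mathbf m,0),(\ww,\varphi))=0$ and $a((\ww,\varphi),(\mathbf m,0))=0$. Thus testing \eqref{eq:proj} against $(\mathbf m,0)$ produces the trivial identity $0=0$, so the rigid-motion test directions carry no information and the normalization $\mathrm P\widehat\uu_h=\mathrm P\uu$ supplies exactly the missing $\dim\mathcal M$ equations. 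Setting $\widehat\uu_h=\mathrm P\uu+\widehat\uu_h^0$ with $\widehat\uu_h^0\in\bff V_h^0$ and using that $\mathrm P\uu\in\mathcal M$ drops out of $a$, the problem reduces to finding $(\widehat\uu_h^0,\widehat\psi_h)\in\bff V_h^0\times W_h^G$ with $a((\widehat\uu_h^0,\widehat\psi_h),(\ww,\varphi))=a((\uu,\psi),(\ww,\varphi))$ for all $(\ww,\varphi)\in\bff V_h^0\times W_h^G$. On this subspace $a$ is coercive by the first paragraph, so the Lax--Milgram lemma (in finite dimensions, which also handles the non-symmetric cross terms) gives a unique solution; reassembling $\widehat\uu_h$ and checking \eqref{eq:proj} on $\mathcal M\times\{0\}$ together with the normalization $\mathrm P\widehat\uu_h=\mathrm P\uu$ recovers the original problem and shows $\Pi$ is well-defined.

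For the quasioptimality estimate I would run a C\'ea argument adapted to the seminorm. Put $e=(\uu,\psi)-\Pi(\uu,\psi)$. The normalization in \eqref{eq:proj} gives $\mathrm P(\uu-\widehat\uu_h)=\mathbf 0$, so the Korn/Deny--Lions bound of the first paragraph applies to $e$ and yields $\|e\|_{1,\Omega}\lesssim a(e,e)^{1/2}$. For arbitrary $(\ww_h,\varphi_h)\in\bff V_h\times W_h^G$ set $\eta:=(\uu,\psi)-(\ww_h,\varphi_h)$; the Galerkin identity in \eqref{eq:proj}, applied to the discrete function $(\widehat\uu_h,\widehat\psi_h)-(\ww_h,\varphi_h)=\eta-e$, gives $a(e,\eta-e)=0$, that is $a(e,e)=a(e,\eta)$, and continuity of $a$ (guaranteed by $\CC,\EE,\KK\in L^\infty(\Omega)$) then bounds this by $\lesssim\|e\|_{1,\Omega}\,\|\eta\|_{1,\Omega}$. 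Combining the two estimates produces $\|e\|_{1,\Omega}\lesssim\|\eta\|_{1,\Omega}$, and taking the infimum over $(\ww_h,\varphi_h)$ — which decouples across the product space into the best approximations defining $\mathbf I_h$ and $I_h$ — gives exactly the claimed bound.

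The main obstacle is concentrated entirely in the case $\Gamma_D=\emptyset$, where $a$ is merely seminorm-coercive and the seminorm annihilates $\mathcal M\times\{0\}$. Both well-posedness and quasioptimality then hinge on using the normalization $\mathrm P\widehat\uu_h=\mathrm P\uu$ to kill the rigid-motion part so that Korn's inequality becomes applicable, and the assumption $\mathcal M\subset\bff V_h$ is precisely what makes the splitting $\bff V_h=\mathcal M\oplus\bff V_h^0$ and the consistency of the Galerkin equation in the $\mathcal M$-directions legitimate. Once this bookkeeping is in place, the cross-term cancellation, the Deny--Lions step, and the C\'ea manipulation are all routine.
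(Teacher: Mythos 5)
Your proof is correct, and it reaches the result by a mechanically different route than the paper. Where you neutralize the rigid-motion kernel by splitting $\bff V_h=\mathcal M\oplus\bff V_h^0$ with $\bff V_h^0:=\{\ww\in\bff V_h:\mathrm P\ww=\mathbf 0\}$, solving a reduced problem on $\bff V_h^0\times W_h^G$ by Lax--Milgram, and then running a seminorm-adapted C\'ea argument on the error (legitimate because the normalization gives $\mathrm P(\uu-\widehat\uu_h)=\mathbf 0$, so Korn applies to the error at the continuous level), the paper instead augments the bilinear form: it shows \eqref{eq:proj} is equivalent to the Galerkin equations for $a((\punto,\punto),(\ww,\varphi))+(\mathrm P\,\punto,\ww)_\Omega$, proves the norm equivalence $\|\eps{\uu}\|_\Omega^2+\|\mathrm P\uu\|_\Omega^2\approx\|\uu\|_{1,\Omega}^2$ (your Korn/Deny--Lions step in disguise), and then obtains unique solvability and quasioptimality in a single stroke from C\'ea's lemma for the augmented coercive form on the full product space $\bff V_h\times W_h^G$. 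The analytic ingredients are identical in both arguments --- cancellation of the piezoelectric cross terms on the diagonal, Korn on the complement of $\mathcal M$, Deny--Lions for the potential, and the hypothesis $\mathcal M\subset\bff V_h$ together with $a((\mathbf m,0),\punto)=0$ --- but the augmentation trick spares the paper your subspace bookkeeping (reduction, reassembly of the $\mathcal M$-component, and a separate uniqueness check), while your decomposition makes more transparent that the $\mathcal M$-test directions carry no information and that the normalization $\mathrm P\widehat\uu_h=\mathrm P\uu$ supplies exactly the $\dim\mathcal M$ missing equations. One small looseness, not a gap: for nontrivial $\Gamma_D$ the bound $\|\ww\|_{1,\Omega}\lesssim\|\eps{\ww}\|_\Omega$ is not literally Korn's second inequality but follows from it by the standard compactness argument using $\mathcal M=\{0\}$; the paper's own citation of Korn's first and second inequalities glosses the same point.
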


\begin{proof}
Problem \eqref{eq:proj} is equivalent to
\begin{subequations}\label{eq:prf}
\begin{alignat}{6}
& (\widehat\uu_h,\widehat\psi_h)\in \bff V_h\times W_h^G,\\
& a((\uu - \widehat\uu_h, \psi - \widehat\psi_h),(\ww, \varphi)) + 
(\mathrm P ( \uu -  \widehat\uu_h), \ww)_\Omega = 0 \quad \forall (\ww, \varphi) \in \bff V_h \times W_h^G.
\end{alignat}
\end{subequations}
This is a simple consequence of the fact that
\[
a((\mathbf m,0),(\ww,\varphi))=0 \qquad \forall \mathbf m\in \mathcal M,
\quad \forall (\ww,\varphi) \in \bff V_h \times W_h^G,
\]
and that by hypothesis $\mathcal M\times \{0\}\subset \bff V_h \times W_h^G.$  {In $\mathbf H_D^1(\Omega)$ we have the norm equivalence 
\[
\|\eps{\uu}\|_\Omega^2 + \|\mathrm P \uu\|_\Omega^2 \approx \|\uu\|^2_{1,\Omega}.
\]
One direction of the equivalence is a straightforward application of the boundedness of the operators. To see the other direction, we first note that for any $\mathbf u  \in \mathbf H^1(\Omega)$, we have the orthogonal decomposition $\mathbf u = P\mathbf u + (I-P)\mathbf u$.  Furthermore, since $P\mathbf u \in \mathcal M$ we have that $\varepsilon(P\mathbf u) = 0$.  Using this and Korn's first and second inequalities \cite[Chapter 10]{McLean2000}, we have 
\begin{alignat*}{3}
\|\varepsilon(\mathbf u)\|_\Omega^2 + \|P\mathbf u\|_\Omega^2 &= \|\varepsilon((I-P)\mathbf u)\|_\Omega^2 + \left(\|P\uu\|_\Omega^2 + \|\varepsilon(P\uu)\|_\Omega^2\right)\\
&\geq \|(I-P)\uu\|_{1,\Omega}^2 + \|P\uu\|_{1,\Omega}^2\\
&= \|\uu\|_{1,\Omega}^2.
\end{alignat*}
With this, we have that} $(\widehat\uu_h,\widehat\psi_h)$ is the Galerkin approximation of $(\uu,\psi)\in \HH^1_D(\Omega)\times H^1_G(\Omega)$ in the discrete space $\bff V_h\times W_h^G$ with respect to the bounded coercive bilinear form 
\[
a((\uu, \psi),(\ww, \varphi)) + 
(\mathrm P  \uu , \ww)_\Omega.
\]
The result is then a straightforward consequence of C\'ea's lemma.
\end{proof}

\begin{proposition}\label{thm:est}
Let $(\uu, \psi)$ be the solution to \eqref{eq:state} and $(\uu_h, \psi_h)$ its Galerkin approximation \eqref{eq:SDstate}. If $\uu \in \CC^2 ([0,T];\HH_D^1(\Omega))$, then for every $t \in [0,T]$,
\begin{alignat*}{3}
\| \uu (t) - \uu_h(t)\|_{1,\Omega} + \|\psi(t) - \psi_h(t)\|_{1,\Omega} & \lesssim \|\uu(t) - \bff I_h \uu(t) \|_{1,\Omega} + \|\psi(t) - I_h \psi(t)\|_{1,\Omega}\\
&\hspace{-29 pt} + \int_0^t \left(\| \ddot{\uu}(\tau) - \bff I_h \ddot{\uu}(\tau)\|_{1,\Omega} + \|\ddot{\psi}(\tau) - I_h \ddot{\psi}(\tau)\|_{1,\Omega} \right) \mathrm d \tau.
\end{alignat*}
\end{proposition}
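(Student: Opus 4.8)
The plan is to use the elliptic projection $\Pi$ of Lemma~\ref{lemma:quasi} to split the error into a projection part, controlled directly by quasioptimality, and a fully discrete part, which I will estimate with an energy argument. Writing $(\widehat\uu_h,\widehat\psi_h)=\Pi(\uu(t),\psi(t))$, I decompose
\[
\uu-\uu_h=\bs\eta+\bs\theta,\qquad \psi-\psi_h=\zeta+\phi,
\]
where $\bs\eta:=\uu-\widehat\uu_h$, $\zeta:=\psi-\widehat\psi_h$ are the projection errors and $\bs\theta:=\widehat\uu_h-\uu_h\in\bff V_h$, $\phi:=\widehat\psi_h-\psi_h\in W_h^G$ are the discrete errors. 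By Lemma~\ref{lemma:quasi}, $\|\bs\eta(t)\|_{1,\Omega}+\|\zeta(t)\|_{1,\Omega}\lesssim \|\uu(t)-\bff I_h\uu(t)\|_{1,\Omega}+\|\psi(t)-I_h\psi(t)\|_{1,\Omega}$, which already produces the first two terms on the right-hand side, so it remains to bound $\bs\theta$ and $\phi$.

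Next I would derive the equation for the discrete error. Subtracting \eqref{eq:SDstate} from the weak form \eqref{eq:stateA} tested against $(\ww,\varphi)\in\bff V_h\times W_h^G$ (legitimate because $\bff V_h\times W_h^G\subset\HH_D^1(\Omega)\times H_G^1(\Omega)$ and $\langle\ddot\uu,\ww\rangle_\rho=(\rho\ddot\uu,\ww)_\Omega$ since $\uu\in\CC^2([0,T];\HH_D^1(\Omega))$), the boundary data cancels and I obtain $(\rho(\ddot\uu-\ddot\uu_h),\ww)_\Omega+a((\uu-\uu_h,\psi-\psi_h),(\ww,\varphi))=0$. The defining Galerkin orthogonality of $\Pi$ gives $a((\bs\eta,\zeta),(\ww,\varphi))=0$, and since $\Pi$ is a fixed bounded linear operator it commutes with $\partial_t$, so $[\Pi(\ddot\uu,\ddot\psi)]_1=\ddot{\widehat\uu}_h$ and $\ddot\uu-\ddot\uu_h=\ddot{\bs\eta}+\ddot{\bs\theta}$. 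Hence $(\bs\theta,\phi)$ solves the semidiscrete state-type problem
\[
(\rho\ddot{\bs\theta},\ww)_\Omega+a((\bs\theta,\phi),(\ww,\varphi))=-(\rho\ddot{\bs\eta},\ww)_\Omega\qquad\forall(\ww,\varphi)\in\bff V_h\times W_h^G,
\]
with $\bs\theta(0)=\bff0$ and initial velocity $\dot{\bs\theta}(0)=\dot{\widehat\uu}_h(0)$; this is exactly the form treated by (the proof of) Theorem~\ref{thm:stateh}, now with a volume load $-\rho\ddot{\bs\eta}$ in place of the boundary term.

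For the energy estimate I would test this identity with $(\dot{\bs\theta},\dot\phi)$. The inertial, elastic and dielectric terms assemble into $\tfrac12\tfrac{d}{dt}\big[\|\dot{\bs\theta}\|_\rho^2+(\CC\eps{\bs\theta},\eps{\bs\theta})_\Omega+(\KK\nabla\phi,\nabla\phi)_\Omega\big]$. The crucial point is the piezoelectric cross terms $(\EE\nabla\phi,\eps{\dot{\bs\theta}})_\Omega-(\EE^\top\eps{\bs\theta},\nabla\dot\phi)_\Omega$: taking $\ww=\bff0$ in the error identity shows $(\KK\nabla\phi,\nabla\varphi)_\Omega=(\EE^\top\eps{\bs\theta},\nabla\varphi)_\Omega$ for all $\varphi\in W_h^G$, and differentiating this constraint in time and then cross-substituting $\varphi=\phi$ and $\varphi=\dot\phi$ shows both cross-term contributions reduce (up to sign) to $(\KK\nabla\dot\phi,\nabla\phi)_\Omega$, so they cancel. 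This yields the energy identity $\dot E=-(\rho\ddot{\bs\eta},\dot{\bs\theta})_\Omega$ with $E:=\tfrac12\|\dot{\bs\theta}\|_\rho^2+\tfrac12(\CC\eps{\bs\theta},\eps{\bs\theta})_\Omega+\tfrac12(\KK\nabla\phi,\nabla\phi)_\Omega$. Integrating (with $E(0)=\tfrac12\|\dot{\bs\theta}(0)\|_\rho^2$, since $\bs\theta(0)=\bff0$ forces $\phi(0)=\bff0$ through the elliptic constraint) and applying Cauchy--Schwarz and Gr\"onwall gives $\sqrt{E(t)}\lesssim\|\dot{\bs\theta}(0)\|_\rho+\int_0^t\|\ddot{\bs\eta}(\tau)\|_\Omega\,\mathrm d\tau$. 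To upgrade the controlled strain $\|\eps{\bs\theta}\|_\Omega$ to $\|\bs\theta\|_{1,\Omega}$ I would estimate the rigid-motion component $\mathrm P\bs\theta$ separately, by testing with $\ww\in\mathcal M$ (which annihilates $a$) and integrating the resulting second-order ODE, then invoke the norm equivalence $\|\eps{\cdot}\|_\Omega^2+\|\mathrm P\cdot\|_\Omega^2\approx\|\cdot\|_{1,\Omega}^2$ from Lemma~\ref{lemma:quasi}; the potential satisfies $\|\phi\|_{1,\Omega}\approx\|\nabla\phi\|_\Omega\lesssim\|\eps{\bs\theta}\|_\Omega$ through the same elliptic constraint. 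Finally $\|\ddot{\bs\eta}(\tau)\|_\Omega\le\|\ddot{\bs\eta}(\tau)\|_{1,\Omega}\lesssim\|\ddot\uu-\bff I_h\ddot\uu\|_{1,\Omega}+\|\ddot\psi-I_h\ddot\psi\|_{1,\Omega}$ by applying Lemma~\ref{lemma:quasi} to $(\ddot\uu,\ddot\psi)$, giving the integral term, and a triangle inequality assembles everything.

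I expect the main obstacle to be the energy step, specifically the cancellation of the piezoelectric cross terms: since $a$ is non-symmetric, energy conservation is not automatic and must be recovered by exploiting the (time-differentiated) elliptic constraint on $\phi$. A second, subtler point is the initial velocity $\dot{\bs\theta}(0)=\dot{\widehat\uu}_h(0)$: because $\Pi$ couples displacement and potential, this does not vanish merely from $\dot\uu(0)=\bff0$, as it would in the uncoupled scalar case. I would control it through quasioptimality by $\|\dot\psi(0)-I_h\dot\psi(0)\|_{1,\Omega}$, a term of the same order as those already present (and vanishing when the data are compatible so that $\dot\psi(0)=\bff0$).
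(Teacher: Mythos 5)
Your proof is correct and follows essentially the same route as the paper's: the same splitting through the elliptic projection $\Pi$ of Lemma~\ref{lemma:quasi}, the same discrete error equation with volume load $-\rho\ddot{\bs\eta}$ (the paper's $\rho\ddot{\boldsymbol\varepsilon}_u$ with $\boldsymbol\varepsilon_u=-\bs\eta$), and the same final appeal to quasioptimality applied to $(\ddot\uu,\ddot\psi)$ using that $\Pi$ commutes with time differentiation. The one structural difference is how the discrete error is bounded: the paper runs no energy argument at all, but simply invokes the stability estimate of Theorem~\ref{thm:adjh} after time reversal, taking $\bff f:=\ddot{\boldsymbol\varepsilon}_u(T-\punto)$; your testing with $(\dot{\bs\theta},\dot\phi)$, the cancellation of the piezoelectric cross terms via the time-differentiated discrete elliptic constraint, and the recovery of $\|\bs\theta\|_{1,\Omega}$ from $\|\eps{\bs\theta}\|_\Omega$ amount to inlining, correctly, the stability proof the paper outsources to Theorems~\ref{thm:stateh}--\ref{thm:adjh} and the appendix. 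This buys self-containedness at the cost of length; incidentally, $\|\bs\theta(t)\|_\Omega\le\int_0^t\|\dot{\bs\theta}(\tau)\|_\Omega\,\mathrm d\tau$ from $\bs\theta(0)=\mathbf 0$ would spare you the separate rigid-motion ODE. On the initial velocity you are in fact more careful than the paper: the paper flatly asserts $\dot{\mathbf e}_u(0)=\mathbf 0$, while, as you observe, $\dot{\bs\theta}(0)=[\Pi(\mathbf 0,\dot\psi(0))]_1$ need not vanish because $\Pi$ couples displacement and potential; it vanishes exactly under the compatibility $\dot\psi(0)=0$, equivalently $\dot z(0)=0$, which does hold in the only place the proposition is invoked (Theorem~\ref{thm:Sh2S}, where $z\in\mathcal Z_{\mathrm{str}}$). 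Your patch $\|\dot{\bs\theta}(0)\|_{1,\Omega}\lesssim\|\dot\psi(0)-I_h\dot\psi(0)\|_{1,\Omega}$ is correct, but note that this term appears nowhere on the stated right-hand side and is not controlled by it in general (no quantity involving $\dot\psi$ is present), so as written you prove the estimate augmented by this compatibility-vanishing term; to match the statement verbatim you must assume $\dot\psi(0)=0$, as the paper implicitly does. Finally, record explicitly, as the paper does, that the hypothesis forces $\psi\in\CC^2([0,T];H_G^1(\Omega))$ (through the elliptic relation, given smooth enough data $z$), since you need this both for the commutation $\tfrac{\mathrm d^2}{\mathrm dt^2}\Pi(\uu,\psi)=\Pi(\ddot\uu,\ddot\psi)$ and for $\ddot\psi-I_h\ddot\psi$ in your final bound to make sense.
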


\begin{proof}
Note first that if $\uu \in \CC^2 ([0,T];\HH_D^1(\Omega))$, then $\psi\in \mathcal C^2([0,T];H^1_G(\Omega))$, due to the fact that $\psi(t)$ can be computed (for every $t$) from the relation
\[
(\boldsymbol\kappa\nabla\psi(t),\nabla\varphi)_\Omega
=(\boldsymbol\varepsilon(\uu(t)),\mathcal E\nabla\varphi)_\Omega
\qquad\forall\varphi\in H^1_G(\Omega).
\]
In particular we have enough smoothness in the space variable after two time derivatives to write $\langle \ddot\uu(t),\ww\rangle_\rho=(\rho\,\ddot\uu(t),\ww)_\Omega$ for all $t$ and $\ww$. Consider the elliptic projection applied to the continuous solution $(\widehat\uu_h(t),\widehat\psi_h(t)):=\Pi(\uu(t),\psi(t))$. It is clear that
\[
\tfrac{\mathrm d^2}{\mathrm dt^2}(\widehat\uu_h(t),\widehat\psi_h(t))
=\Pi(\ddot\uu(t),\ddot\psi(t)),
\]
and therefore $\Pi(\uu,\psi)=(\widehat\uu_h,\widehat\psi_h)\in \mathcal C^2([0,T];\bff V_h\times W_h^G)$. The discrete pair $(\bff u_h,\psi_h)$ is also in this space, due to the fact that we are working in finite dimensions and the norm in the final space is not relevant for smoothness. Now consider the error quantities
\[
\bff e_u(t):=\widehat\uu_h(t)-\uu_h(t),
\qquad
e_\psi(t):=\widehat\psi_h(t)-\psi_h(t),
\]
and the approximation error $\boldsymbol\varepsilon_u(t):=\widehat\uu_h(t)-\uu(t)$. Therefore {, after plugging $\mathbf e_u(t)$ and $e_\psi(t)$ into \eqref{eq:SDstate}, we obtain}
\begin{subequations}
\begin{alignat}{6}
& (\bff e_u,e_\psi)\in \mathcal C^2([0,T];\bff V_h\times W_h^G),\\
& (\rho\,\ddot{\bff e}_u(t),\ww)_\Omega
	+a((\bff e_u(t),e_\psi(t)),(\ww,\varphi))
	=(\rho\ddot{\boldsymbol\varepsilon}_u(t),\ww)_\Omega
	 \hspace{4 pt}\forall (\ww,\varphi)\in \bff V_h\times W_h^G,\\
& \bff e_u(0)=\dot{\bff e}_u(0)= \mathbf 0,
\end{alignat}
\end{subequations}
as follows from the definition of the elliptic projection $\Pi$ with \eqref{eq:proj}. We can then apply Theorem \ref{thm:adjh} with $\bff f:=\ddot{\boldsymbol\varepsilon}_u(T-\,\cdot\,)$ to obtain bounds for $(\bff e_u(T-\cdot),e_\psi(T-\cdot))$. The rest of the proof follows from a direct application of Lemma \ref{lemma:quasi}.
\end{proof}

At this moment, we start dealing with asymptotic properties. We thus assume that we have collection of subspaces $\{\bff V_h\times W_h^G\}$ directed in a parameter $h\to 0$ such that
\begin{equation}\label{eq:approx}
\bff I_h\uu\longrightarrow \uu, \qquad I_h\psi\to\psi
\qquad \forall(\uu,\psi)\in \HH^1_D(\Omega)\times H^1_G(\Omega),
\end{equation}
where the arrow describes limits as $h\to 0$ in the corresponding spaces

\begin{theorem}\label{thm:Sh2S}
Assuming that \eqref{eq:approx} holds, we have $S_hz\to Sz$ in $\mathcal U$ for all $z\in \mathcal Z.$
\end{theorem}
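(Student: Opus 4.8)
The plan is to combine the non-asymptotic error bound of Proposition \ref{thm:est} with the approximation hypothesis \eqref{eq:approx} on a dense class of smooth data, and then to propagate convergence to all of $\mathcal Z$ by an equiboundedness (three-epsilon) argument. The two structural ingredients are that $S:\mathcal Z\to\mathcal U$ is bounded (Theorem \ref{thm:1}) and, crucially, that the semidiscrete solvers are \emph{uniformly} bounded in $h$ (Theorem \ref{thm:stateh}): $\triple{S_h z}{\mathcal U}\le C_\star\triple{z}{\mathcal Z}$ with $C_\star$ independent of $h$.

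First I would establish convergence on the regular subset
\[
\mathcal D:=\{z\in \CC^\infty([0,T];L_0^2(\Gamma)) : z\equiv 0\ \text{in a neighborhood of } t=0\}.
\]
For $z\in\mathcal D$ the data are smooth in time and vanish to all orders at $t=0$, so by differentiating the state equation \eqref{eq:state} twice in time and re-applying Theorem \ref{thm:1} to the differentiated problem (whose data $\ddot z$ again lie in $\mathcal Z$ and whose initial conditions remain homogeneous) one obtains $\uu=Sz\in\CC^2([0,T];\HH_D^1(\Omega))$; the potential then inherits $\psi\in\CC^2([0,T];H^1_G(\Omega))$ through the elliptic relation, exactly as recorded at the start of the proof of Proposition \ref{thm:est}. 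Hence Proposition \ref{thm:est} applies and bounds $\|\uu(t)-\uu_h(t)\|_{1,\Omega}$ by the best-approximation errors of $\uu(t),\psi(t)$ plus the time-integrated best-approximation errors of $\ddot\uu,\ddot\psi$. Since $\bff I_h$ and $I_h$ are orthogonal projections and $0$ belongs to each discrete space, every approximation error is bounded by the norm of the function being approximated, and $t\mapsto\|\uu(t)\|_{1,\Omega}$, $t\mapsto\|\ddot\uu(t)\|_{1,\Omega}$ (and their $\psi$-counterparts) are continuous on $[0,T]$. Thus the whole right-hand side of Proposition \ref{thm:est} is dominated by a fixed $L^2(0,T)$ function independent of $h$, while \eqref{eq:approx} drives it to $0$ pointwise in $t$ as $h\to0$. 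Squaring and integrating, dominated convergence yields $\triple{Sz-S_hz}{\mathcal U}\to0$ for every $z\in\mathcal D$.

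Next I would verify that $\mathcal D$ is dense in $\mathcal Z$ and run the three-epsilon argument. Given $z\in\mathcal Z$, I multiply by cut-offs $\chi_n$ vanishing on $[0,1/n]$, equal to $1$ on $[2/n,T]$, with $|\dot\chi_n|\lesssim n$, and then mollify in time (with radius smaller than $1/n$, preserving vanishing near $0$). Using $z(0)=0$ together with $\|z(t)\|_\Gamma^2\le t\int_0^t\|\dot z(\tau)\|_\Gamma^2\,\mathrm d\tau$ one checks $\int_0^T\|\dot\chi_n(\tau)\,z(\tau)\|_\Gamma^2\,\mathrm d\tau\to0$, so the truncations converge to $z$ in $\mathcal Z$. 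Now fix $z\in\mathcal Z$ and $\varepsilon>0$, choose $w\in\mathcal D$ with $\triple{z-w}{\mathcal Z}$ small, and split
\[
\triple{Sz-S_hz}{\mathcal U}\le \triple{S(z-w)}{\mathcal U}+\triple{Sw-S_hw}{\mathcal U}+\triple{S_h(w-z)}{\mathcal U}.
\]
The first term is $\le\|S\|\triple{z-w}{\mathcal Z}$ and the third is $\le C_\star\triple{z-w}{\mathcal Z}$, both controlled uniformly in $h$ by the choice of $w$; the middle term tends to $0$ as $h\to0$ by the first step. Hence $\limsup_{h\to0}\triple{Sz-S_hz}{\mathcal U}\le(\|S\|+C_\star)\triple{z-w}{\mathcal Z}$, which is arbitrarily small, giving $S_hz\to Sz$ in $\mathcal U$.

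The main obstacle is the regularity claim of the first step: that smooth, initially-vanishing data produce solutions with two time derivatives in the energy space $\HH_D^1(\Omega)$, which is precisely the hypothesis that makes Proposition \ref{thm:est} usable. This rests on differentiating the coupled hyperbolic-elliptic system in time and re-applying the well-posedness theory to the differentiated system; the vanishing of the data near $t=0$ for $z\in\mathcal D$ is exactly what keeps the differentiated initial conditions homogeneous, so that Theorem \ref{thm:1} applies verbatim. Everything else is soft functional analysis, namely uniform boundedness combined with density.
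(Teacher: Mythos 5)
Your proposal is correct and takes essentially the same route as the paper: convergence first on a dense class of time-regular data via Proposition \ref{thm:est}, then extension to all of $\mathcal Z$ by a three-epsilon argument using the boundedness of $S$ (Theorem \ref{thm:1}) and the $h$-uniform boundedness of $S_h$ (Theorem \ref{thm:stateh}), with only cosmetic differences (your $\mathcal D$ versus the paper's $\mathcal Z_{\mathrm{str}}$ of $\mathcal C^3$ data with vanishing derivatives at $t=0$, and dominated convergence where the paper uses a compactness argument for uniform-in-$t$ approximation). The one step you should run in the opposite direction, as the paper does, is the regularity claim: instead of differentiating the a priori non-differentiable solution twice, solve the state equation with data $\ddot z\in\mathcal Z$ to get $(\mathbf v,\eta)$, take the double antiderivative, and identify it by uniqueness as the solution with data $z$, which yields $\ddot\uu=\mathbf v\in\mathcal C^0([0,T];\HH_D^1(\Omega))$ and makes Proposition \ref{thm:est} applicable.
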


\begin{proof}
We need to carefully proceed in a series of steps. If we take $(\ww,\varphi)\in \mathcal C^0([0,T];\HH^1_D(\Omega)\times H^1_G(\Omega))$, then the hypothesis above and a compactness argument imply that
\[
\max_{0\le t\le T} \| \bff I_h \ww(t)-\ww(t)\|_{1,\Omega}
+\max_{0\le t\le T}\|I_h\psi(t)-\psi(t)\|_{1,\Omega} \to 0.
\]
Consider now the set
\[
\mathcal Z_{\mathrm{str}}:=\{ z\in \mathcal C^3([0,T];L_0^2(\Gamma))\,:\,
z(0)=\dot z(0)=\ddot z(0)=0\},
\]
and note that if $z\in \mathcal Z_{\mathrm{str}}$, then $\ddot z\in \mathcal Z$. Let then $(\bff v,\eta)$ be the solution to the state equations \eqref{eq:state} when we use $\ddot z$ as data. The pair
\[
(\uu,\psi)(t):=
\int_0^t\left(\int_0^{\tau_1}(\mathbf v(\tau_2),\eta(\tau_2))
\mathrm d\tau_2\right)\mathrm d\tau_1,
\]
is then clearly a solution to \eqref{eq:state} with $z$ as input data. Moreover we have $\ddot{\bff u}=\bff v\in \mathcal C^0([0,T];\HH^1_D(\Omega))$. Using Proposition \ref{thm:est}, it then follows that
\[
\triple{S_hz-Sz}{\mathcal U} \to 0 \qquad \forall z\in \mathcal Z_{\mathrm{str}}.
\]
Finally, the result follows from the density of $\mathcal Z_{\mathrm{str}}$ in $\mathcal Z$ (this can be proved by a standard cut-off and mollification argument), the boundedness of $S:\mathcal Z\to\mathcal U$ (Theorem \ref{thm:1}) and the uniform boundedness of $S_h:\mathcal Z\to \mathcal U$ (Theorem \ref{thm:stateh}). 
\end{proof}

\begin{theorem}\label{thm:Rh2R}
Assuming that \eqref{eq:approx} holds, we have $R_h\mathbf f\to R\mathbf f$ in $\mathcal X$ for all $\mathbf f\in \mathcal U.$
\end{theorem}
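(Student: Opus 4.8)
The plan is to follow the proof of Theorem~\ref{thm:Sh2S} almost step for step: establish the convergence first on a dense subclass of smooth data $\mathbf f$ for which the adjoint solution has extra time regularity, and then extend to all of $\mathcal U$ by density together with the boundedness of $R$ (Theorem~\ref{thm:2}) and the uniform-in-$h$ boundedness of $R_h$ (Theorem~\ref{thm:adjh}). The first ingredient I would record is a Galerkin error estimate for the adjoint pair, completely analogous to Proposition~\ref{thm:est}. The semidiscrete adjoint problem \eqref{eq:SDadj} shares the hyperbolic--elliptic structure of \eqref{eq:SDstate} (same bilinear form $a$, terminal instead of initial data, and a distributed source $\rho\mathbf f$ in place of the Neumann datum), so I would apply the elliptic projection $\Pi$ of Lemma~\ref{lemma:quasi}; the resulting error equations are exactly of the form \eqref{eq:SDadj} with source the second time derivative of the projection error, so Theorem~\ref{thm:adjh} bounds them directly (no time reversal is needed here, since the terminal conditions already match). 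Under the hypothesis $\pp\in\CC^2([0,T];\HH_D^1(\Omega))$ --- which forces $\xi\in\CC^2([0,T];H^1_G(\Omega))$ through the elliptic relation $(\KK\nabla\xi,\nabla\varphi)_\Omega=(\eps{\pp},\EE\nabla\varphi)_\Omega$ --- this yields
\[
\|\pp(t)-\pp_h(t)\|_{1,\Omega} + \|\xi(t)-\xi_h(t)\|_{1,\Omega}
\lesssim \|\pp(t)-\bff I_h\pp(t)\|_{1,\Omega}+\|\xi(t)-I_h\xi(t)\|_{1,\Omega}
+\int_t^T\big(\|\ddot\pp-\bff I_h\ddot\pp\|_{1,\Omega}+\|\ddot\xi-I_h\ddot\xi\|_{1,\Omega}\big)\,\mathrm d\tau.
\]

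Next I would produce the smooth dense subclass. Set $\mathcal U_{\mathrm{str}}:=\{\mathbf f\in\CC^2([0,T];\HH_D^1(\Omega)):\mathbf f(T)=\dot{\mathbf f}(T)=\mathbf 0\}$, which is dense in $\mathcal U$ with respect to $\triple{\cdot}{\mathcal U}$ by a standard mollification-and-cutoff argument near $t=T$. For $\mathbf f\in\mathcal U_{\mathrm{str}}$ the datum $\ddot{\mathbf f}$ again lies in $\mathcal U$, so \eqref{eq:adj} with source $\ddot{\mathbf f}$ has a solution $(\mathbf q,\zeta)$ with $\mathbf q\in\CC^1([0,T];\HH_D^1(\Omega))$. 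Defining $(\pp,\xi)(t):=\int_t^T\!\int_{\tau_1}^T(\mathbf q,\zeta)(\tau_2)\,\mathrm d\tau_2\,\mathrm d\tau_1$ and using that $a$ commutes with time integration, that $\ddot\pp=\mathbf q$, and that the terminal conditions $\mathbf f(T)=\dot{\mathbf f}(T)=\mathbf 0$ make the double backward integral of $\ddot{\mathbf f}$ reproduce $\mathbf f$, one verifies that $(\pp,\xi)$ solves \eqref{eq:adj} with data $\mathbf f$. In particular $\ddot\pp=\mathbf q\in\CC^0([0,T];\HH_D^1(\Omega))$, so $\pp\in\CC^2([0,T];\HH_D^1(\Omega))$, which is precisely the regularity the estimate above requires.

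Finally I would combine the pieces. For $\mathbf f\in\mathcal U_{\mathrm{str}}$ the trace bound $\|\gamma\varphi\|_\Gamma\lesssim\|\varphi\|_{1,\Omega}$ gives
\[
\triple{R_h\mathbf f-R\mathbf f}{\mathcal X}^2
=\int_0^T\|\gamma(\xi_h-\xi)(t)\|_\Gamma^2\,\mathrm dt
\lesssim \int_0^T\|(\xi-\xi_h)(t)\|_{1,\Omega}^2\,\mathrm dt,
\]
and the right-hand side tends to $0$: by \eqref{eq:approx} and the same compactness argument used in Theorem~\ref{thm:Sh2S}, each projection error on the right of the estimate converges to $0$ uniformly in $t$ (the integrand $\ddot\pp,\ddot\xi$ being continuous in time into $\HH_D^1(\Omega)\times H^1_G(\Omega)$). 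For general $\mathbf f\in\mathcal U$ I would choose $\mathbf f_n\in\mathcal U_{\mathrm{str}}$ with $\triple{\mathbf f-\mathbf f_n}{\mathcal U}\to 0$ and split $R_h\mathbf f-R\mathbf f$ into $R_h(\mathbf f-\mathbf f_n)$, $R_h\mathbf f_n-R\mathbf f_n$, and $R(\mathbf f_n-\mathbf f)$, absorbing the first and last terms by the uniform boundedness of $R_h$ and the boundedness of $R$, and sending the middle term to $0$ in $h$, exactly as in Theorem~\ref{thm:Sh2S}.

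I expect the only genuinely new obstacle to be the regularization step: exhibiting data $\mathbf f$ whose adjoint solution satisfies $\pp\in\CC^2([0,T];\HH_D^1(\Omega))$. The backward double-integration construction supplies this, and the one point that must be checked with care is that it indeed reconstructs \eqref{eq:adj} with data $\mathbf f$ --- this is where the terminal conditions $\mathbf f(T)=\dot{\mathbf f}(T)=\mathbf 0$ enter. Everything else (the error estimate and the density/boundedness argument) transfers directly from the state-equation analysis already carried out in the excerpt.
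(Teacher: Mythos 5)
Your proof is correct and follows essentially the same three-step strategy as the paper's: an adjoint analogue of Proposition~\ref{thm:est} (with the error equations posed as a final-value problem, so Theorem~\ref{thm:adjh} applies without time reversal, exactly as you note), convergence on a dense class of smooth data, and extension to all of $\mathcal U$ via the boundedness of $R$ (Theorem~\ref{thm:2}) and the uniform boundedness of $R_h$ (Theorem~\ref{thm:adjh}). The only deviation is minor: the paper takes the larger dense class $\mathcal U_{\mathrm{str}}=\{\uu\in\CC^1([0,T];\HH_D^1(\Omega)):\uu(T)=\mathbf 0\}$ and simply asserts that this yields $\pp\in\CC^2([0,T];\HH_D^1(\Omega))$, whereas you work with the smaller class $\{\mathbf f\in\CC^2([0,T];\HH_D^1(\Omega)):\mathbf f(T)=\dot{\mathbf f}(T)=\mathbf 0\}$ and verify the regularity explicitly via the backward double-integration construction transplanted from Theorem~\ref{thm:Sh2S} --- a more self-contained variant of the same argument, and your check that the terminal conditions on $\mathbf f$ make the double integral of $\ddot{\mathbf f}$ reproduce $\mathbf f$ is exactly the point that needs care.
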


\begin{proof}
This proof follows a very similar pattern to the one used in Theorem \ref{thm:Sh2S}. We first need to establish a result like Proposition \ref{thm:est} for the difference $(\pp-\pp_h,\xi-\xi_h)$ corresponding to the solutions of the adjoint problem \eqref{eq:adj} and its Galerkin semidiscretization \eqref{eq:SDadj}. This is easy, due to the fact that the error equations are the same, with final values at $T$ instead of initial values at $0$. To have $\pp\in \mathcal C^2([0,T];\HH^1_D(\Omega))$ as needed for the estimate, it is enough to work with $\bff f$ in the space
\[
\mathcal U_{\mathrm{str}}:=\{ \uu\in \mathcal C^1([0,T];\HH^1_D(\Omega))\,:\,\uu(T)=0\},
\]
which is dense in $\mathcal U$. We thus get convergence $R_h\bff f\to R\bff f$ in $\mathcal X$ for $\bff f\in \mathcal U_{\mathrm{str}}$. Finally, we use the boundedness of $R:\mathcal U\to \mathcal X$ (Theorem \ref{thm:2}) and uniform boundedness of $R_h:\mathcal U\to \mathcal X$ (Theorem \ref{thm:adjh}) to extend the result to arbitrary $\bff f\in \mathcal U$. 
\end{proof}

\subsection{Convergence of the semidiscrete control problem} \label{ssec:contConv}

Before we state our results on the semidiscretization error of the functional, we introduce the orthogonal projection $\Pi_h: L^2(\Gamma) \to \mathcal P_0(\Gamma_h)$.  We note that if $z \in \mathcal Z_\mathrm{ad}$, then $\Pi_h z \in \mathcal Z_\mathrm{ad}^h$. 

\begin{theorem}\label{thm:zh2z}
If $z$ solves \eqref{eq:cont2} and $z_h$ solves \eqref{eq:semCont}, then we can bound the semidiscretization error for the optimal control as 
\begin{equation}
\triple{z-z_h}{\mathcal Z}
\lesssim  \triple{z-\Pi_hz}{\mathcal Z}
+\triple{(S-S_h)z}{\mathcal U}
+\triple{(R-R_h)(Sz-\mathbf u_d)}{\mathcal X}
+\triple{\beta-\Pi_h\beta}{\mathcal X}, 
\end{equation}
where $\beta=R(Sz-\uu_d)$. The hidden constants are independent of $h$ and behave as $1/\alpha$ as $\alpha \to 0$. 
\end{theorem}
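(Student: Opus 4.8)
The plan is to compare the two variational inequalities \eqref{eq:contOpt} and \eqref{eq:semOpt}, written in their expanded form through the adjoint outputs $\beta=R(Sz-\uu_d)$ and $\beta_h=R_h(S_hz_h-\uu_d)$. Since $z_h\in\mathcal Z_\mathrm{ad}^h\subset\mathcal Z_\mathrm{ad}$ is admissible for the continuous problem and $\Pi_hz\in\mathcal Z_\mathrm{ad}^h$ (by the projection property recalled just before the statement), I would test the continuous inequality with $w=z_h$ and the semidiscrete one with $w_h=\Pi_hz$, and add them. The first key observation is that $\Pi_h$ is orthogonal not only pointwise in $L^2(\Gamma)$ but also in $\mathcal Z$: because $\Pi_h$ commutes with $\mathrm d/\mathrm dt$ and maps into $\mathcal P_0(\Gamma_h)$, one has $\llbracket z-\Pi_hz,y_h\rrbracket_{\mathcal Z}=0$ for every $y_h\in\mathcal Z_h$, and in particular $\llbracket z_h,\Pi_hz-z\rrbracket_{\mathcal Z}=0$. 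Using this, the two $\alpha$-terms collapse exactly to $-\alpha\triple{z-z_h}{\mathcal Z}^2$, leaving
\[
\alpha\triple{z-z_h}{\mathcal Z}^2\le \int_0^T\Big(\langle\beta(t),(z_h-z)(t)\rangle_\Gamma+\langle\beta_h(t),(\Pi_hz-z_h)(t)\rangle_\Gamma\Big)\mathrm dt=:T_1.
\]

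Next I would split $T_1=T_{1a}+T_{1b}$ with $T_{1a}=\int_0^T\langle\beta-\beta_h,z_h-z\rangle_\Gamma\mathrm dt$ and $T_{1b}=\int_0^T\langle\beta_h,\Pi_hz-z\rangle_\Gamma\mathrm dt$, and use throughout the algebraic identity
\[
\beta_h-\beta=R_hS_h(z_h-z)+R_h(S_h-S)z+(R_h-R)(Sz-\uu_d).
\]
For $T_{1a}$ the decisive point is the leading term: by the transposition formula \eqref{lem:4.1realh} applied with data $S_h(z_h-z)$ and direction $z_h-z$, the contribution $-\int_0^T\langle R_hS_h(z_h-z),z_h-z\rangle_\Gamma\,\mathrm dt$ equals $-\triple{S_h(z_h-z)}{\rho}^2\le0$ and may simply be discarded. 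This sign is essential: a naive bound would produce a term $\sim\triple{z-z_h}{\mathcal Z}^2$ with an $\alpha$-independent constant, which could not be absorbed for small $\alpha$. The two remaining pieces of $\beta_h-\beta$ are controlled, via Cauchy--Schwarz, the uniform boundedness of $R_h$ (Theorem \ref{thm:adjh}) and the time Poincar\'e inequality $\triple{w}{\mathcal X}\lesssim\triple{w}{\mathcal Z}$ valid since $w(0)=0$, by $\big(\triple{(S-S_h)z}{\mathcal U}+\triple{(R-R_h)(Sz-\uu_d)}{\mathcal X}\big)\triple{z-z_h}{\mathcal Z}$.

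For $T_{1b}$ I would exploit the pointwise $L^2(\Gamma)$-orthogonality: since $\Pi_hz-z\perp\mathcal P_0(\Gamma_h)$ and $\Pi_h\beta_h\in\mathcal P_0(\Gamma_h)$, we may replace $\beta_h$ by $\beta_h-\Pi_h\beta_h$, and then write $(I-\Pi_h)\beta_h=(I-\Pi_h)\beta+(I-\Pi_h)(\beta_h-\beta)$. Using $\|I-\Pi_h\|\le1$ this produces the target term $\triple{\beta-\Pi_h\beta}{\mathcal X}$ together with $\triple{\beta-\beta_h}{\mathcal X}$, all multiplied by $\triple{z-\Pi_hz}{\mathcal X}\lesssim\triple{z-\Pi_hz}{\mathcal Z}$; and the identity above, with the uniform boundedness of $S_h$ and $R_h$ (Theorems \ref{thm:stateh} and \ref{thm:adjh}), gives $\triple{\beta-\beta_h}{\mathcal X}\lesssim\triple{z-z_h}{\mathcal Z}+\triple{(S-S_h)z}{\mathcal U}+\triple{(R-R_h)(Sz-\uu_d)}{\mathcal X}$.

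Collecting everything, each term on the right is an approximation error of the four asserted types multiplied by either $\triple{z-z_h}{\mathcal Z}$ or $\triple{z-\Pi_hz}{\mathcal Z}$ (the latter being itself one of those errors). A final application of Young's inequality absorbs the factors of $\triple{z-z_h}{\mathcal Z}$ into the left-hand side; because the coercivity constant is $\alpha$, this absorption divides by $\alpha$ and yields the claimed $1/\alpha$ behaviour of the hidden constant. I expect the main obstacle to be the careful treatment of $T_{1a}$ and $T_{1b}$: one must resist bounding $\langle\beta-\beta_h,z_h-z\rangle$ crudely and instead isolate the nonnegative $\triple{S_h(z_h-z)}{\rho}^2$, and one must route the discrete adjoint $\beta_h$ through the continuous $\beta$ so that only absorbable quantities and the genuine projection error $\triple{\beta-\Pi_h\beta}{\mathcal X}$ survive.
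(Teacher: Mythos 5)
Your proposal is correct and is essentially the paper's own argument: the same testing of \eqref{eq:contOpt} with $z_h$ and of \eqref{eq:semOpt} with $\Pi_h z$, the same decomposition $\beta_h-\beta=R_hS_h(z_h-z)+R_h(S_h-S)z+(R_h-R)(Sz-\uu_d)$, the same use of the transposition formula \eqref{lem:4.1realh} to discard the sign-definite term $-\triple{S_h(z_h-z)}{\rho}^2$, the same pointwise orthogonality of $\Pi_h$ to isolate $\triple{\beta-\Pi_h\beta}{\mathcal X}$, and the same Poincar\'e-plus-Young absorption producing the $1/\alpha$ constant. The only deviation is organizational: you collapse the two penalty terms at the outset to exactly $-\alpha\triple{z-z_h}{\mathcal Z}^2$ via $\llbracket z_h,\Pi_h z-z\rrbracket_{\mathcal Z}=0$, whereas the paper keeps them inside $\langle j_h'(z_h),\Pi_h z-z\rangle$ and estimates them with the same orthogonality plus Young's inequality, incurring a harmless extra $O(\alpha)\triple{z-\Pi_h z}{\mathcal Z}^2$ term.
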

\begin{proof}
By the optimality conditions \eqref{eq:contOpt} and \eqref{eq:semOpt} we have 
\[
\langle j'(z), z_h - z \rangle \geq 0,\qquad  \qquad 
\langle j_h'(z_h), \Pi_h z - z_h \rangle \geq 0,
\]
since $z_h \in \mathcal Z_\mathrm{ad}$ and $\Pi_h z \in \mathcal Z_\mathrm{ad}^h$.  Adding these together and using Propositions \ref{prop:4.6} and \ref{prop:4.6h}, we obtain 
\begin{alignat*}{4}
0 \leq &\, \langle j'(z), z_h -z \rangle + \langle j_h'(z_h), \Pi_h z - z_h \rangle\\
= &\int_0^T \langle \beta(t),z_h(t) - z(t) \rangle_\Gamma \mathrm d t + \alpha \int_0^T \langle \dot{z}(t), \dot{z}_h(t) - \dot{z}(t) \rangle_\Gamma \mathrm d t\\
&+ \int_0^T \langle \beta_h(t),\Pi_h z(t) - z_h(t)  \rangle_\Gamma \mathrm d t + \alpha \int_0^T \langle \dot{z}_h(t), \Pi_h \dot{z}(t) - \dot{z}_h (t) \rangle_\Gamma \mathrm d t,
\end{alignat*}
where $\beta_h=R_h(S_hz_h-\uu_d)$. 
Careful manipulation and rearrangement yields the quantity we wish to bound on the left hand side. 
\[
\alpha \int_0^T \|\dot{z}(t) - \dot{z}_h(t) \|_\Gamma^2 \mathrm d t \leq  \, \langle j_h'(z_h), \Pi_h z -z \rangle
+ \int_0^T \langle z_h(t) - z(t), \beta(t) - \beta_h(t) \rangle_\Gamma \mathrm d t.
\]
We can write this as  
\begin{equation} \label{eq:bound0}
\alpha \triple{z-z_h}{\mathcal Z}^2  \leq  \left|\langle j_h'(z_h), \Pi_h z- z\rangle\right| + \int_0^T \langle z_h(t) -z(t),\beta(t) - \beta_h(t) \rangle_\Gamma \mathrm d t,
\end{equation}
and we consider the two terms on the right separately to arrive at a final bound. To simplify some lengthy expressions to come we will use the approximation error
\[
\varepsilon_z(t):=\Pi_hz(t)-z(t),
\]
and note that $\dot\varepsilon_z(t)=\Pi_h\dot z(t)-\dot z(t)$. We also collect some bounds (Theorems \ref{thm:stateh} and \ref{thm:adjh}) in a constant $C_{\mathrm{stb}}>0$ such that
\begin{subequations}\label{eq:constantC}
\begin{equation}
\triple{S_h}{\mathcal Z \to\mathcal U}
+\triple{R_h}{\mathcal U\to\mathcal X}
+\triple{R_hS_h}{\mathcal Z \to \mathcal X}
\le C_{\mathrm{stb}}\qquad \forall h,
\end{equation}
and consider the constant
\begin{equation}
C_{\mathrm{Pnc}}:=\sup_{0\neq z\in \mathcal Z}\frac{\triple{z}{\mathcal X}}{\triple{z}{\mathcal Z}},
\end{equation}
\end{subequations}
for the Poincar\'e-like inequality bounding the norm of $\mathcal X$ by the norm of $\mathcal Z$.

We  {begin by once again recalling the characterization of the G\^ateaux derivative in Proposition \ref{prop:4.6h} and then} adding and subtracting  {
\begin{alignat*}{4}
\int_0^T \langle R_h(S_h z - \mathbf u_d)(t),\varepsilon_z(t)\rangle_\Gamma \mathrm dt, \qquad \qquad & \int_0^T \langle R_h(S z - \mathbf u_d)(t),\varepsilon_z(t)\rangle_\Gamma \mathrm dt,\\
\int_0^T \langle R(S z - \mathbf u_d)(t),\varepsilon_z(t)\rangle_\Gamma \mathrm dt, \qquad \qquad &\alpha \int_0^T \langle \dot{z}(t), \dot{\varepsilon}_z(t)\rangle_\Gamma \mathrm d t, 
\end{alignat*}
as well as adding 
\[
-\int_0^T\langle \Pi_h \beta(t), \varepsilon_z(t)\rangle_\Gamma \mathrm dt \qquad \text{and} \qquad -\alpha\int_0 \langle \Pi_h \dot{z}(t), \dot{\varepsilon}_z(t)\rangle_\Gamma \mathrm dt, 
\]
which are both zero due to the orthogonal projection $\Pi_h$, we obtain} (recall that $\beta=R(Sz-\uu_d)$ and $\beta_h=R_h(S_hz-\uu_d)$)
\begin{alignat*}{4}
\langle j_h'(z_h), \varepsilon_z \rangle = 
& \int_0^T \langle \varepsilon_z(t),
R_hS_h(z_h-z)(t) \rangle_\Gamma \mathrm d t  + \int_0^T \langle \varepsilon_z(t), R_h(S_h-S)z(t)\rangle_\Gamma \mathrm d t\\
& + \int_0^T \langle \varepsilon_z(t),
	(R_h-R)(Sz-\uu_d)(t)\rangle_\Gamma \mathrm d t + \int_0^T \langle \varepsilon_z(t),
\beta(t) - \Pi_h \beta(t)\rangle_\Gamma \mathrm d t\\
& + \alpha \int_0^T \langle \dot\varepsilon_z(t),\dot{z}_h(t) - \dot{z}(t)\rangle_\Gamma \mathrm d t 
- \alpha \int_0^T \|\dot\varepsilon_z(t) \|_\Gamma^2 \mathrm d t.
\end{alignat*}
We now apply the Cauchy-Schwarz inequality several times in the spaces $\mathcal X$ and $\mathcal Z$, boundedness estimates collected in \eqref{eq:constantC}, and Young's inequality, to estimate
\begin{alignat}{3}
\nonumber
\left|\langle j_h'(z_h), \varepsilon_z \rangle \right| 
\le \, & \frac\alpha4 \triple{z-z_h}{\mathcal Z}^2
	+ \frac{C_{\mathrm{stb}}^2}\alpha\triple{\varepsilon_z}{\mathcal X}^2 
	+\frac32\triple{\varepsilon_z}{\mathcal X}^2 \\
\nonumber
		& \hspace{-9pt} +\frac12\left( C_{\mathrm{stb}}^2\triple{(S_h-S)z}{\mathcal U}^2
				+ \triple{(R_h-R)(Sz-\uu_d)}{\mathcal X}^2
				+ \triple{\beta-\Pi_h\beta}{\mathcal X}^2 \right)\\
\label{eq:bound1}
& \hspace{-9pt} + \frac\alpha4 \triple{z-z_h}{\mathcal Z}^2 
+ 2\alpha  \triple{\varepsilon_z}{\mathcal Z}^2.
\end{alignat}
Turning our attention to the second quantity in \eqref{eq:bound0}, we add and subtract inner products similar to what we have done above to eliminate a non-positive term
\begin{multline*}
\int_0^T\langle z_h(t) - z(t),\beta(t) - \beta_h(t)\rangle_\Gamma \mathrm d t  \\ =\int_0^T \langle z_h(t)-z(t),
(R-R_h)(Sz-\uu_d)(t)+R_h(S-S_h)z(t)+R_hS_h(z-z_h)(t)\rangle_\Gamma \mathrm d t. 
\end{multline*}
By \eqref{lem:4.1realh}, we have
\[
\int_0^T \langle z_h(t) - z(t),R_hS_h(z-z_h)(t)\rangle_\Gamma \mathrm d t
=\int_0^T (S_h(z_h-z)(t),S_h(z-z_h)(t))_\rho\mathrm dt\le 0.
\]
Therefore, by Young's inequality and \eqref{eq:constantC}
\begin{alignat}{6}
\label{eq:bound2}
\int_0^T\langle z_h(t) - z(t),\beta(t) - \beta_h(t)\rangle_\Gamma \mathrm d t
\le &
\frac\alpha4 \triple{z-z_h}{\mathcal Z}^2 \\
\nonumber
&\hspace{-36pt} +\frac{2C_{\mathrm{Pnc}^2}}\alpha 
\left(\triple{(R-R_h)(Sz-\uu_d)}{\mathcal X}^2
+C_{\mathrm{stb}}^2 \triple{(S-S_h)z}{\mathcal U}^2\right).
\end{alignat}
Combining \eqref{eq:bound0}, \eqref{eq:bound1}, and \eqref{eq:bound2}, we have
\begin{alignat*}{6}
\frac\alpha4 \triple{z-z_h}{\mathcal Z}^2
	\le & \left( 2\alpha 	
	+C_{\mathrm{Pnc}}^2\left(\frac32 +\frac{C_{\mathrm{stb}}^2}\alpha\right)\right)
		\triple{\varepsilon_z}{\mathcal Z}^2
			+\frac12\triple{\beta-\Pi_h\beta}{\mathcal X}^2\\
	&\hspace{-33pt}+C_{\mathrm{stab}}^2
	\left(\frac12+\frac{2C_{\mathrm{Pnc}}^2}\alpha\right)
	\triple{(S-S_h)z}{\mathcal U}^2
		+\left(\frac12+\frac{2C_{\mathrm{Pnc}}^2}\alpha\right)	
		\triple{(R-R_h)(Sz-\uu_d)}{\mathcal X}^2,
\end{alignat*}
from where the result follows.
\end{proof}

\begin{corollary}
If we assume that $(\mathbf I_h \mathbf w, I_h \varphi, \Pi_h \eta)  \to (\mathbf w, \varphi, \eta) \in \HH_D^1(\Omega) \times  H_G^1(\Omega) \times L^2(\Gamma)$ for all $(\mathbf w,\varphi,\eta)\in \HH_D(\Omega)\times H_G^1(\Omega)\times L^2(\Gamma)$, then the semidiscrete control $z_h$ converges to the continuous control $z$ in $\mathcal Z$ and therefore $\mathbf u_h=Sz_h \to \mathbf u=Sz$ in $\mathcal U$.
\end{corollary}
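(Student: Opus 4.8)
The plan is to read the convergence directly off the a~priori estimate of Theorem~\ref{thm:zh2z}, which controls $\triple{z-z_h}{\mathcal Z}$ by the sum of four approximation quantities with a constant that is independent of $h$ (and only blows up like $1/\alpha$, which is irrelevant here since $\alpha>0$ is a fixed parameter). Thus the entire task reduces to showing that each of the four terms
\[
\triple{z-\Pi_hz}{\mathcal Z},\quad \triple{(S-S_h)z}{\mathcal U},\quad \triple{(R-R_h)(Sz-\mathbf u_d)}{\mathcal X},\quad \triple{\beta-\Pi_h\beta}{\mathcal X}
\]
tends to zero as $h\to0$, where $\beta=R(Sz-\mathbf u_d)$, and then transferring the conclusion from the control to the state.

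I would first dispatch the two operator-difference terms, since they require no new work. The hypotheses $\mathbf I_h\mathbf w\to\mathbf w$ and $I_h\varphi\to\varphi$ are exactly the approximation property \eqref{eq:approx}, so Theorem~\ref{thm:Sh2S} applies to the fixed control $z$ and gives $\triple{(S-S_h)z}{\mathcal U}\to0$, while Theorem~\ref{thm:Rh2R}, applied to the fixed datum $\mathbf f=Sz-\mathbf u_d\in\mathcal U$ (recall $\mathbf u_d\in\mathcal U$ and $Sz\in\mathcal U$), gives $\triple{(R-R_h)(Sz-\mathbf u_d)}{\mathcal X}\to0$.

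The two projection terms are where the real argument lies and constitute the main obstacle, because the hypothesis only supplies \emph{pointwise} convergence $\Pi_h\eta\to\eta$ in $L^2(\Gamma)$ for each fixed $\eta$, whereas both quantities are integrals over $[0,T]$. For the first term I would use that $\Pi_h$ acts only in space and hence commutes with $\mathrm d/\mathrm dt$, so that $\triple{z-\Pi_hz}{\mathcal Z}^2=\int_0^T\|\dot z(t)-\Pi_h\dot z(t)\|_\Gamma^2\,\mathrm dt$; applying the hypothesis with $\eta=\dot z(t)$ drives the integrand to zero for a.e.\ $t$, and since $\Pi_h$ is an orthogonal projection (a contraction) one has the $h$-independent majorant $\|\dot z(t)-\Pi_h\dot z(t)\|_\Gamma\le2\|\dot z(t)\|_\Gamma$, with $\|\dot z(\cdot)\|_\Gamma\in L^2(0,T)$ because $z\in\mathcal Z$. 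The Lebesgue dominated convergence theorem then passes the limit inside the integral and yields $\triple{z-\Pi_hz}{\mathcal Z}\to0$. The term $\triple{\beta-\Pi_h\beta}{\mathcal X}\to0$ is handled identically, using $\|\beta(t)-\Pi_h\beta(t)\|_\Gamma\le2\|\beta(t)\|_\Gamma$ together with $\beta\in\mathcal X$.

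Combining these four limits in Theorem~\ref{thm:zh2z} gives $z_h\to z$ in $\mathcal Z$. Finally, the convergence of the states is a one-line consequence: $S:\mathcal Z\to\mathcal U$ is bounded and linear (Theorem~\ref{thm:1}, cf.\ Theorem~\ref{prop:wlsc}(a)), so continuity gives $\mathbf u_h=Sz_h\to Sz=\mathbf u$ in $\mathcal U$. Should one instead wish to compare with the genuinely discrete state $S_hz_h$, the splitting $S_hz_h-Sz=S_h(z_h-z)+(S_h-S)z$ together with the uniform bound on $S_h$ from Theorem~\ref{thm:stateh} and Theorem~\ref{thm:Sh2S} yields the same limit.
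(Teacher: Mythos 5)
Your proposal is correct, and its skeleton is exactly the paper's: invoke the four-term bound of Theorem \ref{thm:zh2z} (with $\alpha>0$ fixed, so the $1/\alpha$ blow-up is harmless), kill the two operator-difference terms with Theorems \ref{thm:Sh2S} and \ref{thm:Rh2R} under the hypothesis \eqref{eq:approx}, and conclude $\mathbf u_h = Sz_h \to Sz$ from the boundedness of $S$. Where you genuinely diverge is in the treatment of the two projection terms $\triple{z-\Pi_h z}{\mathcal Z}$ and $\triple{\beta-\Pi_h\beta}{\mathcal X}$. The paper argues by density: it first shows, via a compactness argument, that $\Pi_h y(t)\to y(t)$ \emph{uniformly} in $t$ for $y\in\mathcal C([0,T];L^2(\Gamma))$, and then uses the uniform boundedness of $\Pi_h:\mathcal X\to\mathcal X$ to extend the convergence to arbitrary elements of $\mathcal X$ (and similarly in $\mathcal Z$). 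You instead fix $t$, apply the hypothesis to $\eta=\dot z(t)$ (resp.\ $\beta(t)$) to get a.e.-pointwise convergence of the integrand, dominate by $4\|\dot z(t)\|_\Gamma^2\in L^1(0,T)$ using that $\Pi_h$ is an orthogonal projection, and conclude by dominated convergence. Both are valid; your route is more elementary and self-contained, since it bypasses the compactness step and the density argument entirely, at the mild cost of passing to sequences $h_n\to 0$ (harmless, since the limit is the same along every sequence) and of noting the standard facts that $\dot z(t)\in L_0^2(\Gamma)$ for a.e.\ $t$ and that the fixed bounded operator $\Pi_h$ commutes with the Bochner time derivative, so that $\triple{z-\Pi_h z}{\mathcal Z}^2=\int_0^T\|\dot z(t)-\Pi_h\dot z(t)\|_\Gamma^2\,\mathrm dt$. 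The paper's uniform-boundedness-plus-density pattern buys slightly more (uniform-in-$t$ convergence for continuous data, and a template reused in Theorems \ref{thm:Sh2S} and \ref{thm:Rh2R}), but for this corollary your argument delivers the same conclusion with less machinery. Your closing remark correctly distinguishes $Sz_h$ (which is what the corollary's statement asserts) from $S_hz_h$, and the splitting $S_hz_h-Sz=S_h(z_h-z)+(S_h-S)z$ with Theorems \ref{thm:stateh} and \ref{thm:Sh2S} is the right way to cover the latter.
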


\begin{proof}
Note first that we have Theorems \ref{thm:Sh2S} and \ref{thm:Rh2R} guaranteeing that the middle two terms in the right hand side of the inequality of Theorem \ref{thm:zh2z} converge to zero. Using a compactness argument, it is simple to show that $\Pi_h y(t) \to y(t)$ uniformly in $t$ for every $y\in \mathcal C([0,T];L^2(\Gamma))$. Therefore, since $\Pi_h:\mathcal X \to\mathcal X$ is uniformly bounded, a density arguments shows that $\Pi_h\beta\to \beta$ in $\mathcal X$ for every $\beta\in \mathcal X$. A similar argument can be shown to prove that $\Pi_h z\to z$ in $\mathcal Z$ for arbitrary $z\in \mathcal Z$, which finishes the proof.
\end{proof}

\section{Numerical experiments}\label{sec:num}

Here we present some numerical experiments of the types of problems covered by the theory above.  We will begin with how we carry out the computations.  To verify that the code is computing things properly we show some convergence studies of the discretized state/adjoint solution operator (they are the same modulo data), and the semidiscrete G\^{a}teaux derivative.  This is followed by the some examples showing evidence of the convergence of the discretized optimal control.  We finish by giving snapshots of a simulation.

\subsection{A fully discrete scheme}\label{sec:5.1}

For everything that follows, we take $\Omega$ to be a polyhedral domain that is partitioned in a conforming tetrahedral mesh $\mathcal T_h$. The Finite Element space $W_h$ is the space of globally continuous functions that are polynomials of degree $k$ on each element, i.e., we define $W_h$ exactly as in Section \ref{sec:SCP}.  Additionally we take $\mathbf V_h = W_h^3$.  Unless otherwise stated, all of the experiments use $k\ge 2$, as $k=1$ is known to under-perform in elasticity simulations, even for reasonably well-behaved material properties.  We make use of high-order Gauss-Jacobi quadrature rules to evaluate the integrals in our finite element method so that the approximation error due to the non-constant coefficients does not have an effect on the overall convergence rates.  For the space discretization of the control we take the space $\mathcal P_0(\Gamma)$, of piecewise constant functions on $\Gamma_h$, where $\Gamma_h$ is the partition of the boundary inherited from $\mathcal T_h$. We will also need the subspaces
\[
\mathcal P_0(\Gamma_h)\cap L_0^2(\Gamma)
	\qquad \mbox{and}\qquad
\mathcal P_0^{\mathrm{ad}}(\Gamma_h)
	:=\{ \eta\in \mathcal P_0(\Gamma_h)\cap L_0^2(\Gamma)\,:\, a\le \eta\le b\}.
\]

To discretize the time interval $[0,T]$ we take a partition $t_0 = 0< t_1 < \dots < t_N = T$ with uniform time step $\delta_t := t_n - t_{n-1} = T/N$. Given a function space $X$, we will consider the space of $X$-valued, continuous, piecewise linear functions
\begin{alignat*}{6}
\mathcal P_1^{\mathrm{cont}}(I_N;X):= &
\{ f\in \mathcal C([0,T];X)\,:\, 
f \big|_{(t_{n-1}, t_n)}  \in \mathcal P_1((t_{n-1},t_n);X) \quad n = 1, \dots N\}\\
\subset & H^1(0,T;X).
\end{alignat*}
As a fully discrete space for the control variable we take
\[
\mathcal Z_{\mathrm{fd}}:=
\{ z\in \mathcal P_1^{\mathrm{cont}}(I_N;\mathcal P_0(\Gamma_h)\cap L_0^2(\Gamma))\,:\, z(0)=0\}
\subset \mathcal Z. 
\]
An element $z\in \mathcal Z_{\mathrm{fd}}$ is fully determined by its values $z_n:=z(t_n)\in \mathcal P_0(\Gamma_h)\cap L_0^2(\Gamma)$ (for $n=1,\ldots,N$) and its time derivative is piecewise constant
\[
\dot z|_{(t_{n-1},t_n)}\equiv \dot z_n:= \tfrac1{\delta_t} (z_n-z_{n-1}) \qquad n=1,\ldots,N.
\]

The forward operator is approximated using the Crank-Nicolson method, thus determining, by an implicit unconditionally stable second order in time method, values $(\mathbf u_n,\psi_n)\in \mathbf V_h\times W_h$. Note that this method only uses the time values $z_n=z(t_n)$ of the discrete control. We approximate the functional $j(z)$ by
\begin{alignat*}{3}
j_\mathrm{fd}(z) := & \frac{\delta_t}{12} \sum_{n=1}^N \Big(\|\mathbf u_{n-1} - \mathbf u^d_{n-1}\|_\rho^2 + 4\left\|\tfrac12(\mathbf u_{n-1}+ \mathbf u_{n}) - \mathbf u^d_{n-1/2}\right\|_\rho^2 + \|\mathbf u_{n} - \mathbf u^d_n\|_\rho^2\Big)  \\
& + \frac{\alpha \delta_t}{2} \sum_{n=1}^N \|\dot z_n\|_\Gamma^2,
\end{alignat*}
where $\mathbf u^d_n$ is the weighted $L^2$ projection of $\mathbf u_d(t_n)$ onto $\mathbf V_h$ and $\mathbf u^d_{n-1/2}:=\frac12 (\mathbf u^d_{n-1}+\mathbf u^d_n)$. Note that the penalization term in the functional is computed exactly, while for the term associated to the desired state we build a function in $\mathcal P_1^{\mathrm{cont}}(I_N;\mathbf V_h)$ using the values $\mathbf u_n-\mathbf u_n^d$, and we then integrate exactly in time.

For the adjoint problem, we apply the Crank-Nicolson scheme again (note that this method only uses $\mathbf u_n-\mathbf u^d_n$), outputting time values $(\mathbf p_n,\xi_n)\in \mathbf V_h\times W_h$. The only part of the output that is needed is the trace $\beta_n:=\gamma \xi_n$. A fully discrete version of the gradient is then computed as follows: given $z\in \mathcal Z_{\mathrm{fd}}\cap \mathcal Z_{\mathrm{ad}}$, we look for $g\in \mathcal Z_{\mathrm{fd}}$ such that
\begin{alignat}{6}
\nonumber
\delta_t\sum_{n=1}^N \langle \dot g_n,\dot y_n\rangle_\Gamma
=& \delta_t \sum_{n=1}^{N-1}
\langle \tfrac16 \beta_{n-1}+\tfrac23 \beta_n+\tfrac16 \beta_{n+1}, y_n\rangle_\Gamma +
\delta_t \langle \tfrac16 \beta_{N-1}+\tfrac13\beta_N,y_N\rangle_\Gamma \\
\label{eq:5.100}
& +  \alpha \delta_t \sum_{n=1}^N \left\langle\dot z_n ,\dot y_n \right \rangle_\Gamma
\quad\forall y\in \mathcal Z_{\mathrm{fd}}.
\end{alignat}
The left-hand side of the above equation is the inner product $\llbracket g,y\rrbracket_{\mathcal Z}$. In the right-hand side we have built $\beta\in \mathcal P_1^{\mathrm{cont}}(I_N;\gamma W_h)$ by interpolating the values $\beta_n$ and then we have computed  the resulting integral (note that $y$ is piecewise linear in time too), while the integral associated to the penalization term is computed exactly. There is an easy computational trick to calculate $g$. In a first step, we extend the space $\mathcal Z_{\mathrm{fd}}$ to $\mathcal Z_{\mathrm{fd}}^\star:=\{ z\in\mathcal P_1^{\mathrm{cont}}(I_N;\mathcal P_0(\Gamma_h))\,:\, z(0)=0\}$, i.e., we eliminate the zero average condition in space. Solving for $g\in \mathcal Z_{\mathrm{fd}}^\star$  satisfying equations \eqref{eq:5.100} for all $y\in \mathcal Z_{\mathrm{fd}}^\star$ is equivalent to
solving a very sparse well-conditioned (block-tridiagonal with diagonal blocks) system. As a postprocess, we subtract the average on $\Gamma$ at each time step. This provides the gradient that we wanted to compute. 

To minimize the functional, we use a projected BFGS method using code modified from C.T. Kelley \cite[Chapter 4]{Kelley1999}.  We are using mesh-independent methods because we are taking into account the $H^1$ topology in time.  The projection that we use $\mathcal Q:\mathcal Z_{\mathrm{fd}}\to \mathcal Z_{\mathrm{fd}}\cap \mathcal Z_{\mathrm{ad}}$ can be computed as follows: given $z\in \mathcal Z_{\mathrm{fd}}$ with time values $\{z_n\}$ we minimize the quadratic functional 
\[
\delta_t^{-1}\sum_{n=1}^N \| (z_n-z_{n-1})-(q_n-q_{n-1})\|_\Gamma^2=\triple{ z-q}{\mathcal Z}^2,
\]
looking for time values $\{q_n\}$ in $\mathcal P_0(\Gamma_n)$ satisfying the
restrictions
\[
\int_\Gamma q_n=0, \qquad a\le q_n\le b \qquad n=1,\ldots,N,
\]
i.e., $q_n\in \mathcal P_0^{\mathrm{ad}}(\Gamma_h)$, 
so that the associated $q$ is an element of $\mathcal Z_{\mathrm{fd}}\cap \mathcal Z_{\mathrm{ad}}$. This is a quadratic functional associated to a block-tridiagonal matrix (one block per time step) with diagonal blocks (we are using piecewise constant functions) with linear restrictions. Similar $H^1$ projections have been used in the two recent papers \cite{AnNoVe2018a, AnNoVe2018b}.

\subsection{Code verification}

The next two experiments will serve to show that our code is computing what we expect.  For both experiments our domain $\Omega$ will be the unit cube $(0,1)^3$ and we will use $\mathbf x := (x,y,z)$ to represent points in the domain.  For the Dirichlet part of the boundary $\Gamma_D$ we will take the intersection of the boundary of $\Omega$ with the coordinate planes, i.e.,
\[
\Gamma_D = \{ \mathbf x \in \partial \Omega :  xyz = 0\}.
\]
We use a sequence of meshes on $\Omega$ where we divide $\Omega$ into $M^3$ (for $M\ge 1$) equal cubes with each cube divided into six tetrahedra.  We will take for a mesh parameter $h:= 1/M$.  This means that not all of the meshes in our sequence are nested.  In time, we fix an initial number of equally spaced timesteps, $N_0$, and subsequently for each refinement take $M N_0$ equal time steps to reach $T$, which we take to be 1.  For the mass density in the cube, we use $\rho(\mathbf x) = 1 + |x| + |y|$.  

In the first experiment, we take the dielectric tensor to be a constant matrix 
\[
\boldsymbol{\kappa} = \begin{pmatrix}
19 & 8 & 7\\
8 & 19 & 5\\
7 & 5 & 17
\end{pmatrix}.
\]
We adopt Voigt's notation to replace symmetric indices
\[
(1,1) \leftrightarrow 1 \quad (2,2) \leftrightarrow 2 \quad (3,3) \leftrightarrow 3 \quad (2,3) \leftrightarrow 4 \quad (1,3) \leftrightarrow 5 \quad (1,2) \leftrightarrow 6,
\]
which allows use to formally write the piezolectric tensor as a $6 \times 3$ matrix (even though we write it here transposed for space), where for these experiments we use the constants
\[
\mathcal E = \begin{pmatrix}
2 & 2 & 3 & 5 & 2 & 3\\
1 & 2 & 6 & 3 & 2 & 1\\
4 & 1 & 3 & 3 & 1 & 3
\end{pmatrix}^\top.
\]
For the elastic part of the stress, we use the relationship for a non-homogeneous isotropic material 
\[
\mathcal C \varepsilon(\mathbf u) = 2 \mu \varepsilon(\mathbf u) + \lambda \nabla \cdot \mathbf u I, 
\]
where $I$ is the $3 \times 3$ identity matrix and the Lam\'{e} parameters $\lambda$ and $\mu$ are given by 
\[
\lambda(\mathbf x) =  1 + \frac{1}{1+ |\mathbf x|^2}, \qquad 
\mu(\mathbf x) = 3 + \cos(xyz).
\]
 {All of the above tensors have been chosen for analytical considerations and may not reflect the properties of a physical material.  With these choices, our goal is to setup a benchmark problem to show that our code has been written correctly.}

To test the state equation solver,  {we use the parameters and tensors defined as above and approximate the solution to  
\begin{alignat*}{5}
\rho \ddot{\mathbf u}(t) & = \mathrm{div} \, (\mathcal C \varepsilon(\mathbf u)(t) + \mathcal E \nabla \psi(t)) + \mathbf f(t),\\
0 &= \nabla \cdot (\mathcal E^\top \varepsilon(\mathbf u)(t)  - \kappa \nabla \psi(t)) + f(t),\\
\gamma_D \mathbf u(t) & = \mathbf g_D(t),\\
\gamma_N (\mathcal C \varepsilon(\mathbf u)(t) + \mathcal E \nabla \psi(t)) &= \mathbf g_N(t),\\
G\psi(t) &=0,\\
(\mathcal E^\top \varepsilon(\mathbf u)(t)  - \kappa \nabla \psi(t)) \cdot \boldsymbol{\nu} &= z(t), \\
\mathbf u(0) = \dot{\mathbf u}(0) &= \mathbf 0, 
\end{alignat*}
using the numerical scheme described in Section \ref{sec:5.1}.  The source terms $\mathbf f, f$ and the boundary data $\mathbf g_D, g_N, z$, are} 
defined so that the exact solution to the system is 
\begin{alignat*}{3}
\mathbf u(\mathbf x, t) &= \begin{pmatrix}
H(2t-2/5) \cos(\pi x) \sin(\pi y) \cos(\pi z)\\
H(2t-2/5)(5x^2yz + 4xy^2z + 3xyz^2 + 17)\\
H(2t-2/5) \cos(2x) \cos(3y) \cos(z) \end{pmatrix}, \\
\psi(\mathbf x,t) &= t^2 \left(x^3 + x^3y - 3xy^2z - \frac13 z^3 - \frac1{24}\right),
\end{alignat*}
where $H(t)$ is the polynomial approximation for the Heaviside function
\[
H(t) = \begin{cases}
0 & t\le 0\\
t^5 ( 1-5(t-1) + 15(t-1)^2 - 35(t-1)^3 \\
\hspace{2.8 cm} + 70(t-1)^4 - 126(t-1)^5)  & 0<t<1\\
1 & t\ge 1
\end{cases}.
\]

In Figure \ref{fig:2}, we show the  $L^2(\Omega)$ norm and $H^1(\Omega)$ seminorm of the difference between the exact solution and finite element approximation at the final time using polynomial degree $k=2$ to show the convergence in space. Since the Finite Element method is of higher order than the Crank-Nicolson rule, we expect to see $O(h^2)$ error, however we are refining in both space and time in order to see the expected convergence in space.   \begin{figure}[ht]\center{
\includegraphics[width=0.6\textwidth]{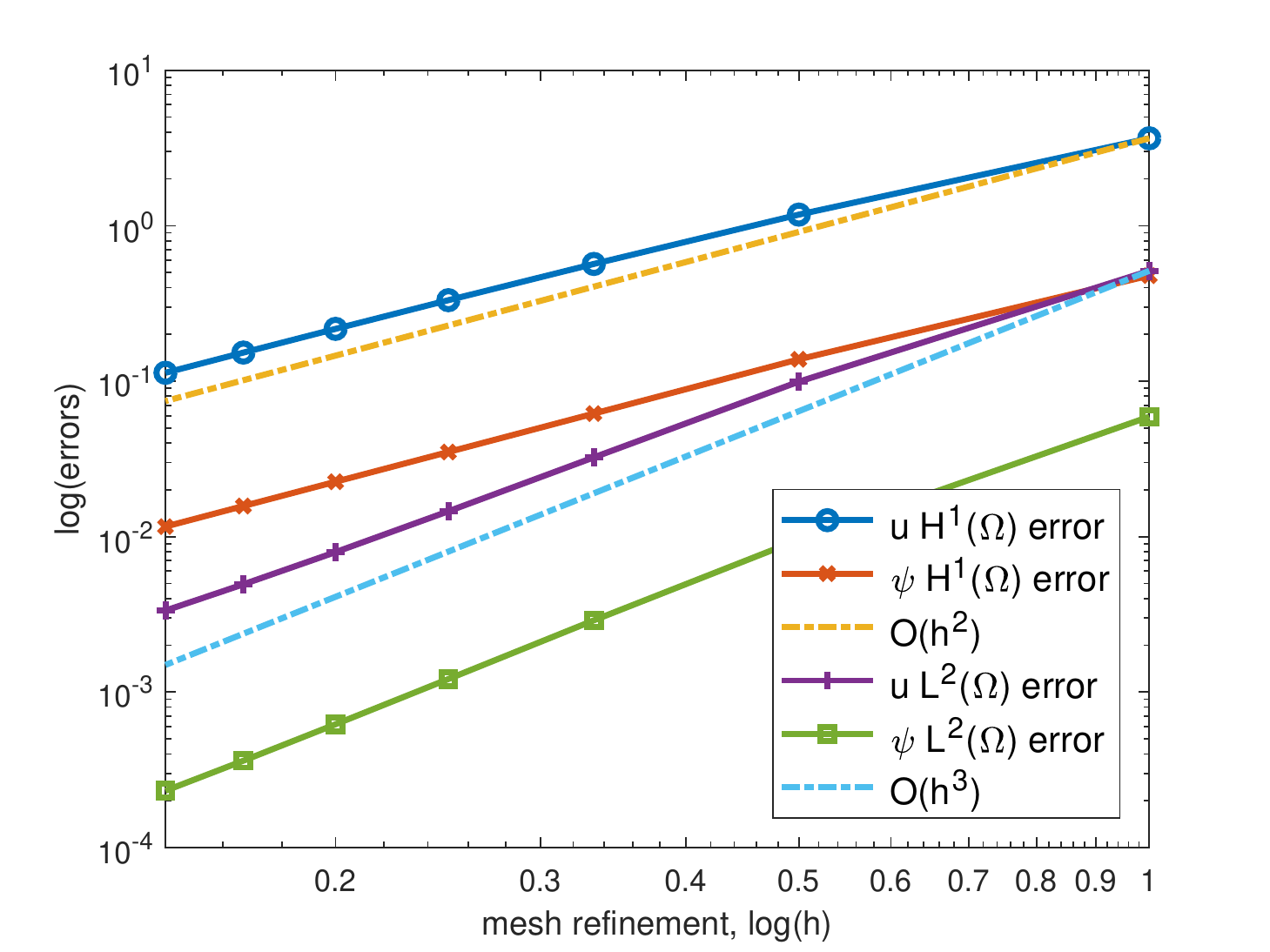}}
\caption{{\footnotesize The $L^2(\Omega)$ and $H^1(\Omega)$ error of the finite element solutions to the state equation compared with exact solutions with refinements in time and space.}}\label{fig:2}
\end{figure}


\subsection{Convergence of the optimal control}

Due to the complexity of the state equation, it is difficult to manufacture an exact solution for the optimal control.  Nevertheless, we would like to know that the control we compute is convergent, matching the theory we have presented in Section \ref{ssec:contConv}.   To achieve that goal, we present some experiments that show evidence that the computed optimal control is converging. 

For what follows, we keep $\Omega, \rho, \lambda, \mu, k, T, \mathcal C, \boldsymbol{\kappa}$, and $\mathcal E$ as in the previous section.  We now take $\Gamma_D$ to be the faces of the cube that intersect with the planes $y=1$ and $y=0$.  {For this and all subsequent experiments we use $\alpha = 10^{-4}$ in the functional.  While we have not conducted a parameter study on $\alpha$ to see what the effect that varying this parameter would have on our solution, we know that we cannot take $\alpha$ too large (as is common in optimal control problems) since this would imply that we are not enforcing adequate control.} Additionally, we take an initial of value of zero for $z_h$ (in space and time) and define all of the components of the desired state by  
\[
t^2 y (y-1)(x+y+z).
\]
Running the projected BFGS optimization routine, we compute the value of the functional (as described above) and the norm of the fully discrete optimal control, 
\[
\triple{z_h}{\mathcal Z} \approx \zeta_h := \delta_t \sum_{n=1}^N \left\|\dot{z}_n\right\|_\Gamma^2.
\]
We refine in both space and time (in the same fashion as the previous experiments) up to $h = 1/8$ and note that the optimization routine converges in the same number of iterations $it$ for each mesh, with the exception of the first mesh which only contains six elements.  This is summarized in Figure \ref{fig:itTable}  and provides evidence that the optimization routine is mesh independent. 
\begin{figure}[ht]\center{
\begin{tabular}{|c||c|c|c|c|c|c|c|c|}
\hline
$h$ & 1 & 1/2 & 1/3 & 1/4 & 1/5 & 1/6 & 1/7 & 1/8\\
\hline
$it$ & 2 & 4 & 4 & 4 & 4 & 4 & 4 & 4\\
\hline
\end{tabular}}
\caption{{\footnotesize The number of iterations needed for convergence in the projected BFGS optimization routine.}} \label{fig:itTable}
\end{figure}

 To show convergence, we compute 
\[
\epsilon_z(h) = \left| \frac{\zeta_h - \zeta_{1/8}}{\zeta_{1/8}}\right|, 
\qquad \epsilon_j(h) = \left|\frac{j_\mathrm{fd}(z_h) - j_\mathrm{fd}(z_{1/8})}{j_\mathrm{fd}(z_{1/8})}\right|, 
\]
for $h \in \{1, 1/2, 1/3, 1/4, 1/5, 1/6, 1/7\}$. The results are shown in 
Figure \ref{fig:3} where we see similar convergence behavior for both the functional and the optimal control. 
\begin{figure}[ht]\center{
\includegraphics[width = 0.6 \textwidth]{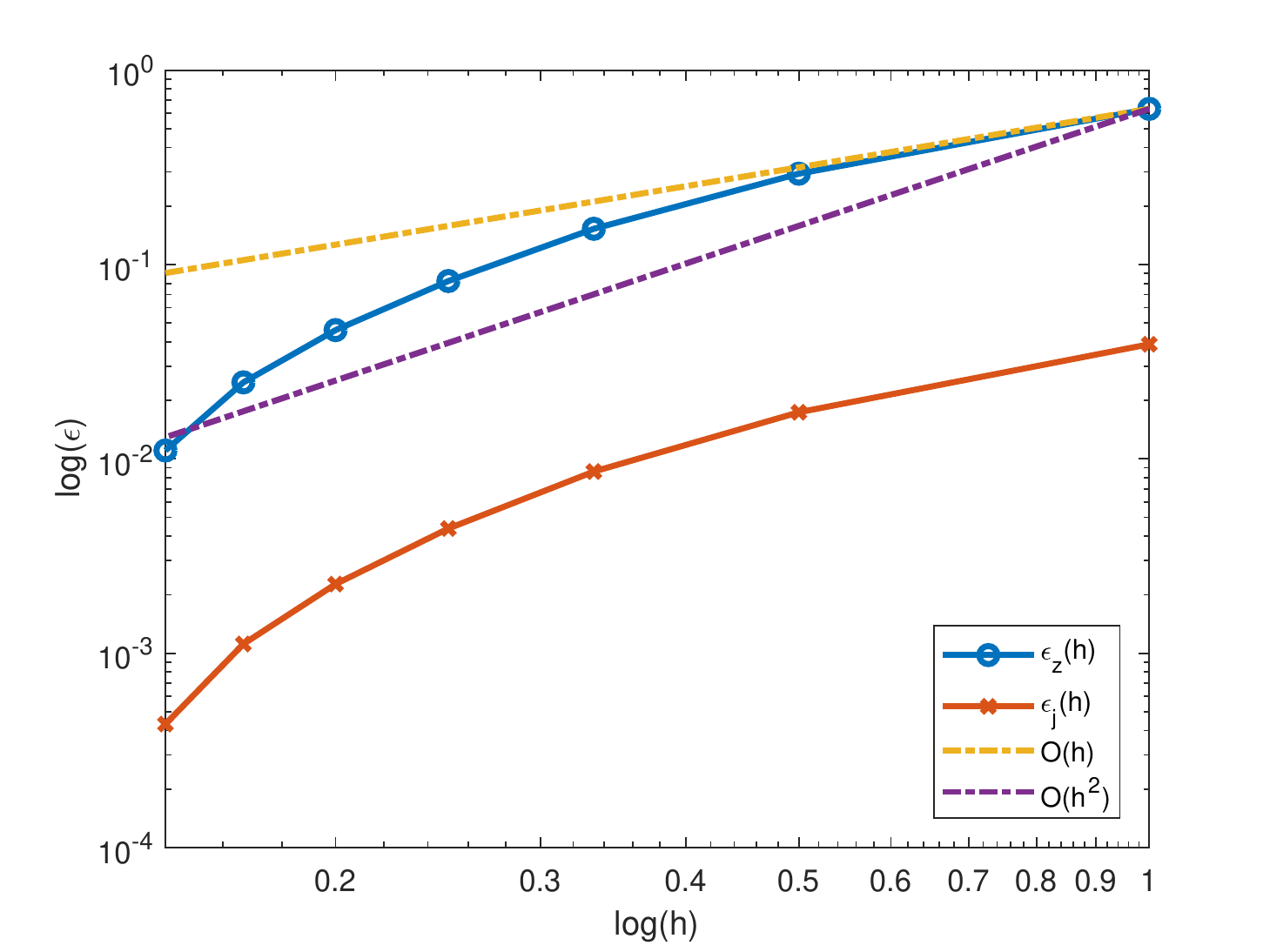} 
}
\caption{{\footnotesize A log log plot in space of $\epsilon_z$ and $\epsilon_j$ compared to convergence lines of order 1 and 2.}} \label{fig:3}
\end{figure}

As more evidence of the convergence of the optimal control, for each of the eight meshes, we compute the integral of the control over each face of the unit cube.  That is, 
\[
\int_{\Gamma_i} z_h\; \mathrm d \Gamma_i \approx \sum_{F \in \mathcal F_i} |F| z_h\big|_F  \qquad  \text{for} \:\:  i = 1, \dots, 6,
\]
where each $\Gamma_i$ represents one of the faces of the cube, and $\mathcal F_i = \Gamma_h \cap \Gamma_i$.  
We plot these integrals as functions of time for each of the space-time refinements over the faces of the cube in Figure \ref{fig:3b}  {including a legend that applies to all six plots}, and see that the plots approach the same values as $h$ decreases.  
\begin{figure}[ht]\center{
\includegraphics[width = \textwidth]{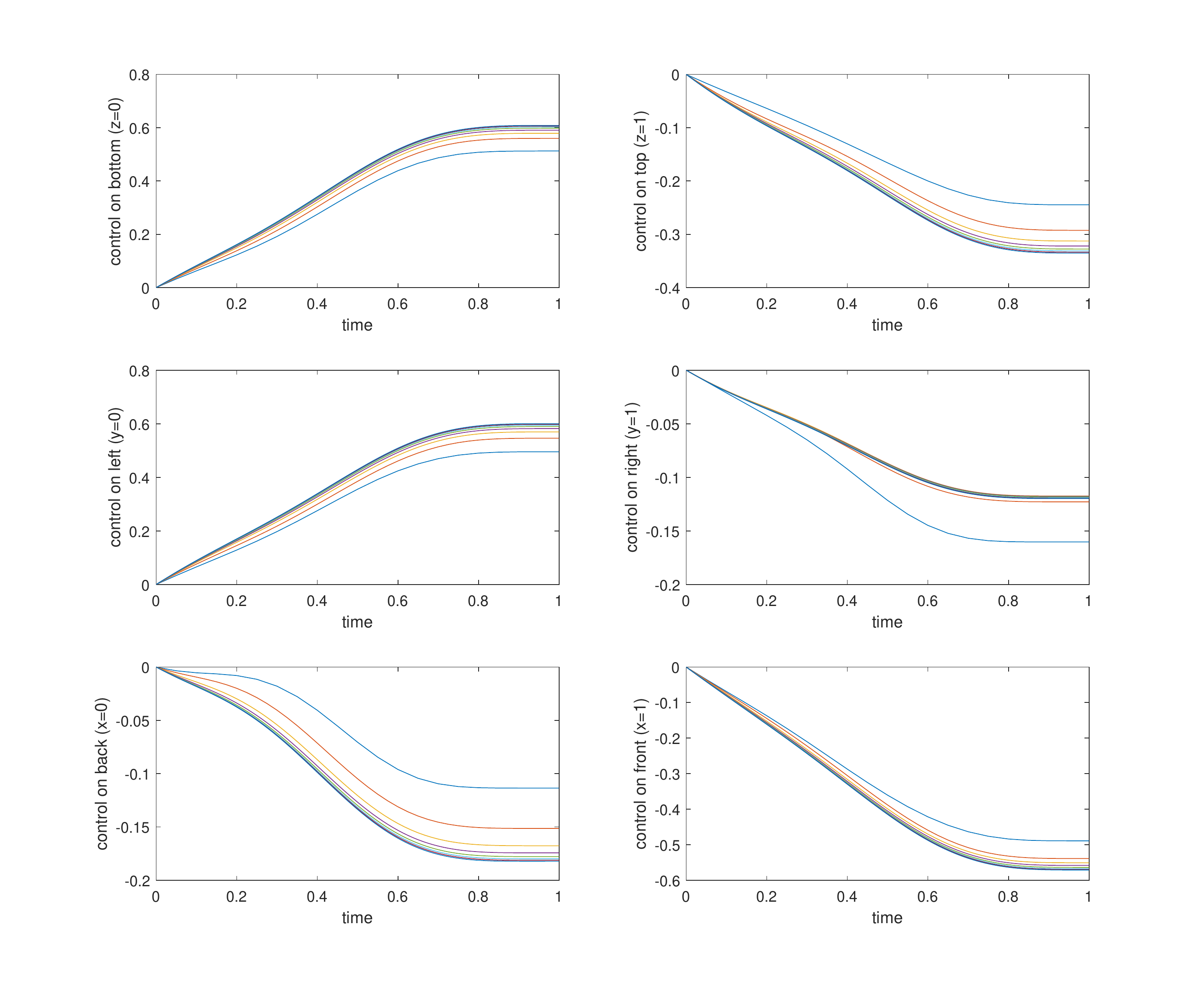}\\
\includegraphics[width = 0.2\textwidth]{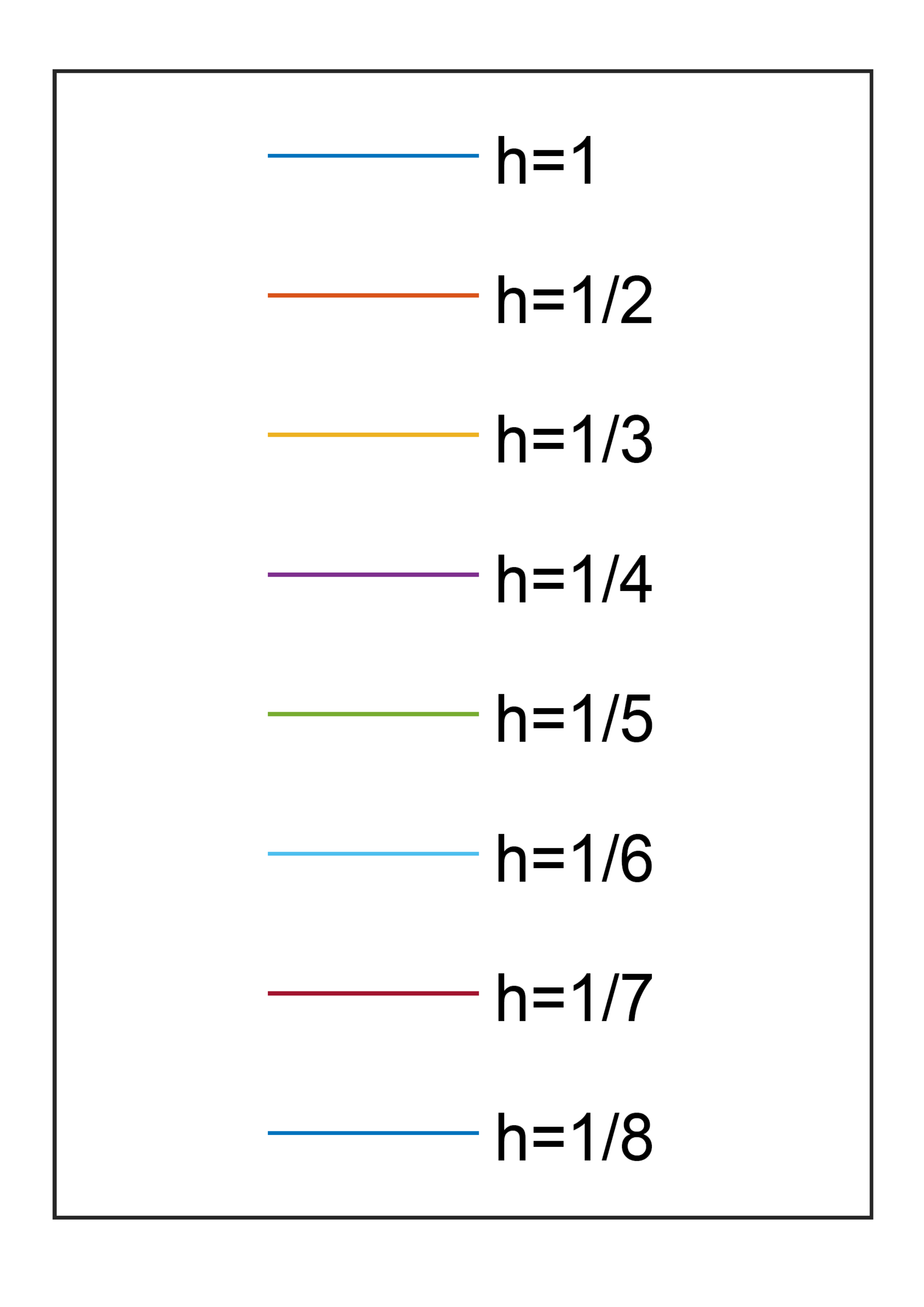}
}
\caption{{\footnotesize The plot shows the computed optimal control for 8 different refinements of the unit cube, integrated over the faces of the cube and plotted as functions of time.}} \label{fig:3b}
\end{figure}

\subsection{Simulation}

In this final section concerning numerical experiments, we describe a simulation in which we show how the optimal control is used to control the deformation of the piezoelectric solid.  To accomplish this task, we again use the unit cube as $\Omega$, this time choosing $\Gamma_D = \Gamma \cap \{z =0\}$, and keep all of the material properties as in the previous experiments.  We use homogeneous boundary and source data  {and take} zero as an initial control.  Using the same polynomial approximation for the Heaviside function $H$ as before we define the window functions 
\[
T_1(t) = H\left(2t - 2/5\right),\qquad \qquad 
T_2(t) = H(t-1/5) H(27/10 - t).
\]
With these functions, we define the desired state
\[
\uu_d = \begin{pmatrix}
T_1(t) (1/2 -y) z\\
T_1(t) (x-1/2) z\\
T_2(t) 2z
\end{pmatrix},
\]
which causes the cube to twist 90 degrees while keeping the bottom face fixed, as well as stretch and compress once in the  {vertical ($z$-axis)} direction.  This choice for the desired state may take us out of the realm of ``small deformations," but we choose it so that we can have something substantial to compare our simulation to.  For space discretization we partition the unit cube into 64 smaller cubes, and each of those into 6 tetrahedra, while in time we take 401 timesteps equally spaced by timestep $\delta = 0.0125$.  {We solve for the optimal control $z_h$.  This quantity is then used as Neumann boundary data for the state equation, where again the Dirichlet boundary (where we implement homogeneous boundary conditions) is the surface of the cube that intersects the plane $z=0$ (bottom face), and the Neumann boundary comprises the remaining 5 surfaces of the cube.  We then solve the state equation, with this data,  using $\mathcal P_3$ finite elements.}  In Figure \ref{fig:5} we show several snapshots from the simulation, showing the computed solution $\uu_h$ on the left and the desired state $\uu_d$ on the right.  The color on both figures is the value of the control. 
\begin{figure}[ht]\center{
\includegraphics[width=0.6\textwidth]{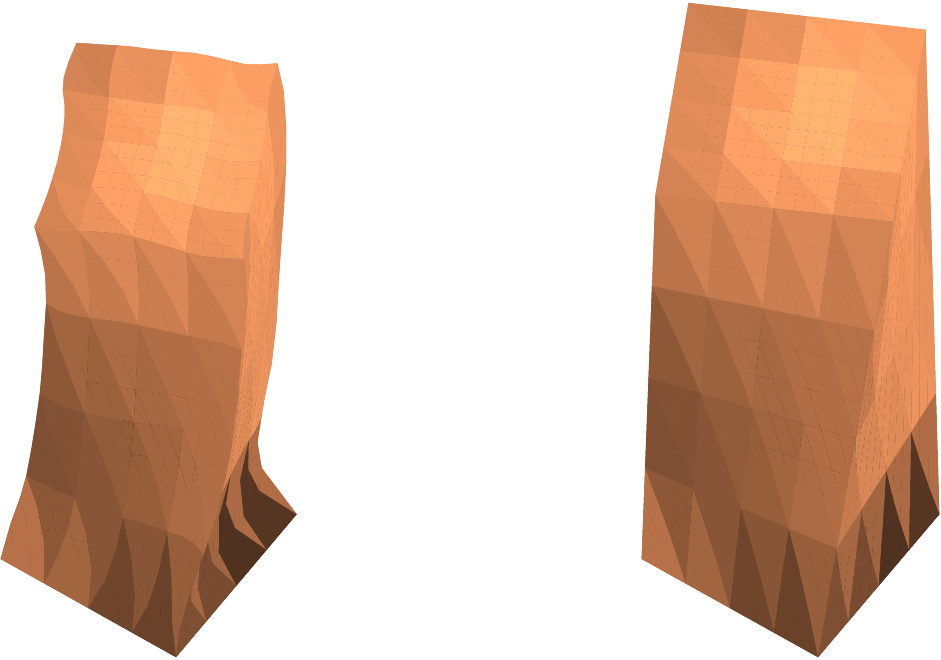}\\
\includegraphics[width=0.6\textwidth]{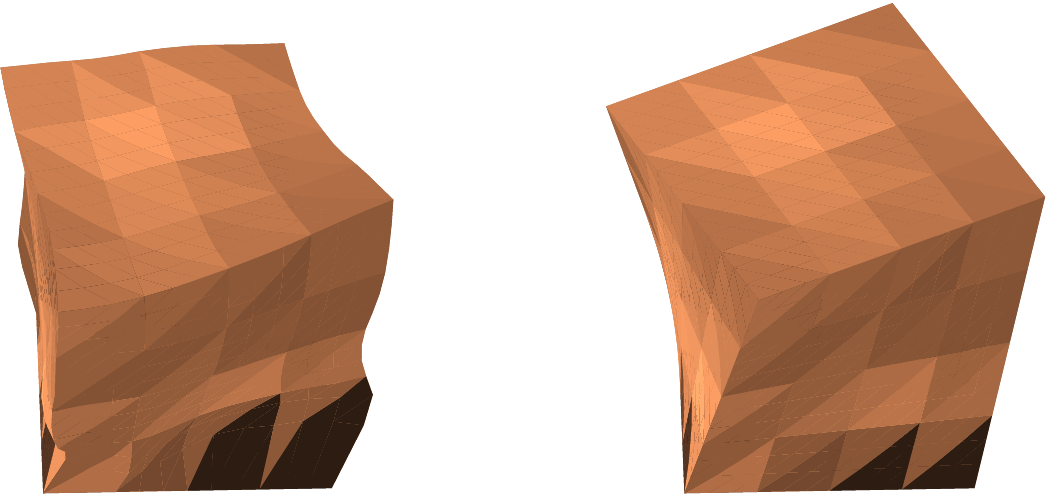}\\
\includegraphics[width=0.6\textwidth]{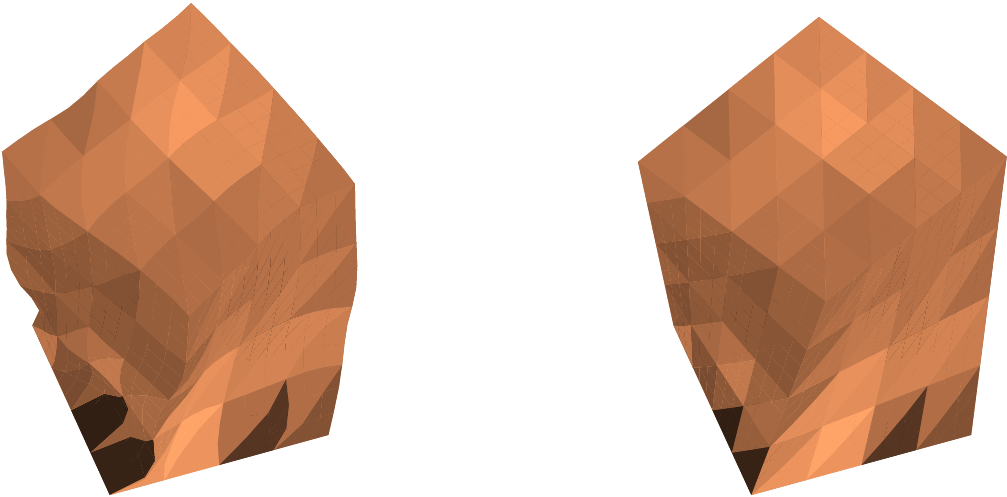}\\
\includegraphics[width=0.6\textwidth]{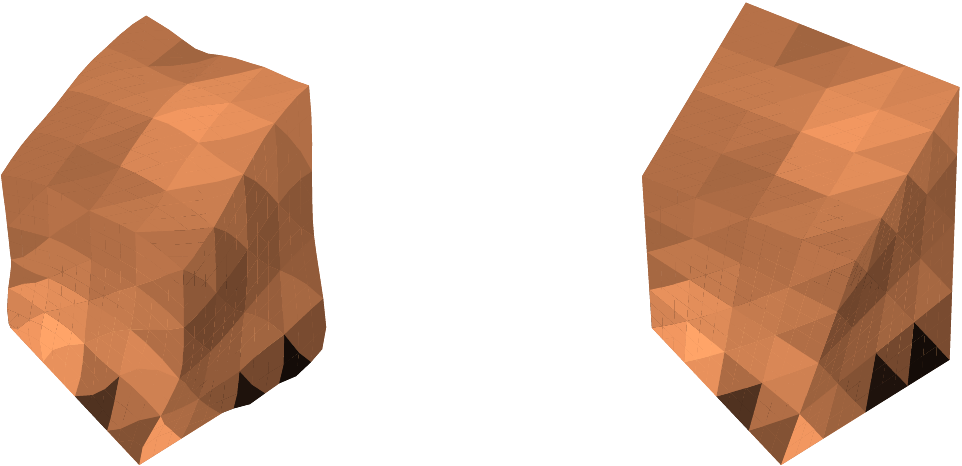}
}
\caption{{\footnotesize Snapshots from the simulation described in the text with the computed solution $\uu_h$ using the optimal control $z_h$ on the left and the desired state $\uu_d$ on the right at timesteps 81, 241, 321, and 401}} \label{fig:5}
\end{figure} 

 {
\section{Conclusion}
In this work, we have studied a PDE constrained optimization problem (or an optimal control problem) 
where the PDE constraints (state 
equations) describe elastic wave propagation in piezoelectric solids. The electric flux acts as the control 
variable and the bound constraints on the control are considered. We enforce the requisite regularity on the
control variable to show well-posedness of the state equations via a cost functional. In addition, we establish
the well-posedness of the optimization problems and derive the first order necessary and sufficient optimality
conditions.
In addition to showing the existence and uniqueness of a semidiscrete (discrete in space continuous in time) optimal control, we have shown the convergence of the semidiscrete optimal control to its continuous counterpart. 
We also provide details on the fully discrete scheme and have given numerical examples in 3-D. 

While the control problem under consideration can be useful in the design of new materials that could be manipulated in response to electric stimuli, it would also be interesting to study other related control problems. For example, in many applications of piezoelectric materials, the weight of people stepping on the
material is used to generate electric current. In this context we would need to use a Dirichlet condition on the elastic displacement as our control variable. This results in a problem that is interesting not only because it incorporates other kinds kinds of applications, but also because the mathematics involved in the problem changes significantly since we need
to incorporate an $H^{1/2}$ norm (in space), for the control, into the cost functional. It will also be interesting to explore other types of control problems in the context of elastic solids with different properties (for example thermoelastic or viscoelastic solids).
}


\bibliographystyle{IMANUM-BIB} 
\bibliography{Refer}

\clearpage

\appendix

%

\section{Proofs of Theorems \ref{thm:1} and \ref{thm:2}} \label{sup:sec:Pfs}

In this Appendix, we present the detailed proofs of Theorems \ref{thm:1} and \ref{thm:2}. 
For an $X$-valued function of a real variable, we consider its antiderivative
\[
(\partial^{-1} f) (t) := \int_0^t f(\tau) \; \mathrm d \tau.
\]

\subsection{The state equation (Theorem \ref{thm:1})}

As a first step, we want to rewrite the state equation in first order form.  To deal with the elliptic equation  { we first introduce the space of gradients of functions in $H_G^1(\Omega)$
\[
\mathcal G(\Omega) = \nabla H_G^1(\Omega) = \{ \nabla \phi : \phi \in H_G^1(\Omega)\}.
\]
We note that since we are imposing the grounding condition, we have that $\nabla: H_G^1(\Omega) \to \mathcal G(\Omega)$ is invertible.  This inverse will be useful in what follows and we will denote it by $g^{-1}$, that is, 
\[
\phi = g^{-1} \mathbf q \in H_G^1(\Omega)  \qquad \Leftrightarrow   \qquad \mathbf q = \nabla \phi \in \mathcal G(\Omega).
\]
Next}, we define the operators  $M_\Omega: \mathrm L^2(\Omega; \matR_\symm) \longrightarrow  {\mathcal G(\Omega)}$ and $M_\Gamma : L_0^2(\Gamma) \longrightarrow  {\mathcal G(\Omega)}$, where $\psi:= {g^{-1}(M_\Omega \mathrm Q+M_\Gamma z)}$ solves 
\begin{alignat*}{4}
&\psi \in H^1(\Omega), \qquad G\psi = 0,\\
&(\KK \nabla \psi, \nabla \varphi)_\Omega = -\langle z, \gamma \varphi \rangle_\Gamma+( {\mathrm Q}, \EE \nabla \varphi)_\Omega \quad \forall \varphi \in H_G^1(\Omega).
\end{alignat*}
We can thus get rid of the electric field (and of the attached elliptic equation) by writing $\psi(t)= {g^{-1}(M_\Omega \boldsymbol\varepsilon(\uu(t))+M_\Gamma z(t))}$, at the same time that we introduce two auxiliary unknowns
\[
\mathrm S:= \partial^{-1} \CC \eps{\uu} \qquad \qquad \mathbf r: = \partial^{-1} \nabla \psi=\partial^{-1}(M_\Omega\boldsymbol\varepsilon(\uu)+M_\Gamma z).
\]
With these definitions, we are ready to formally write the first order formulation of the state equation:
\begin{subequations} \label{sup:eq:FOFst}
\begin{alignat}{4} 
\dot{\uu}(t) &= \rho^{-1}\ddiv (\mathrm S(t) + \EE \mathbf r(t)) &\qquad & t \in [0,T],\label{sup:eq:FOFstA}
\\
\dot{\mathrm S}(t) &= \CC \eps{\uu(t)} && t \in [0,T],
\label{sup:eq:FOFstB}
\\
\dot{\mathbf r}(t) &=  M_\Omega \eps{\uu(t)} + M_\Gamma z(t) && t \in [0,T],\label{sup:eq:FOFstC}\\
\gamma_D \uu(t) &= \mathbf 0 && t \in [0,T],
\label{sup:eq:FOFstD}
\\
\gamma_N (\mathrm S(t) + \EE \mathbf r(t)) & = \mathbf 0 && t \in [0,T],
\label{sup:eq:FOFstE}
\\
\uu(0) &= \mathbf 0, \quad \mathrm S(0) = 0, \quad \mathbf r(0) = \mathbf 0 \label{sup:eq:FOFstF},
\end{alignat}
\end{subequations}
 {where the equations are to be understood in the sense of distributions.}
Our goal now is to analyze \eqref{sup:eq:FOFst} and show it is equivalent to \eqref{eq:state}.  To this end we  define the space $\mathbb H: = \LL_\rho^2(\Omega) \times \mathrm L^2(\Omega; \matR_\symm) \times  {\mathcal G}(\Omega)$ with norm 
\[
\|(\uu, \mathrm S, \mathbf r)\|_\mathbb{H}^2 := (\rho \uu, \uu)_\Omega + (\CC^{-1} \mathrm S, \mathrm S)_\Omega + (\KK \mathbf r, \mathbf r)_\Omega,
\]
where we are using the compliance tensor $\mathcal C^{-1} \in L^\infty(\Omega; \R^{(d \times d) \times (d \times d)})$, whose action is defined as $\matR\ni \mathrm A \mapsto \mathcal C^{-1} \mathrm A := \mathrm B\in L^\infty(\Omega;\matR_{\mathrm{sym}})$ if $\mathcal C \mathrm B : \mathrm M = \mathrm A : \mathrm M$ for every $\mathrm M \in \matR_{\mathrm{sym}}$.  Additionally, we define the space
\[
D(A):= \HH_D^1(\Omega) \times \{ (\mathrm S, \mathbf r)  {\in\mathrm L^2(\Omega; \matR_\symm) \times \mathcal G(\Omega)}  : \ddiv (\mathrm S + \EE \mathbf r) \in \LL^2(\Omega), \gamma_N(\mathrm S + \EE \mathbf r) = \mathbf 0\},
\] 
and the operator $A: D(A) \longrightarrow \mathbb H$ given by
\[
A(\uu, \mathrm S, \mathbf r) := (\rho^{-1} \ddiv(\mathrm S + \EE \mathbf r), \CC \eps{\uu}, M_\Omega \eps{\uu}).
\]
Using the notation $U:= (\uu, \mathrm S, \mathbf r)$ and defining the right-hand side $ F:= (\mathbf 0, 0, M_\Gamma z)$, we can rewrite \eqref{sup:eq:FOFst} as 
\begin{subequations}\label{sup:eq:FOF}
\begin{alignat}{3}
U \in & \CC^0([0,T];D(A))\cap \CC^1([0,T];\mathbb H),\label{eq:A.2.a}\\
\dot{U}(t) & = AU(t) + F(t) \qquad t \in [0,T], \label{eq:A.2.b}\\
U(0) &= 0,
\end{alignat}
\end{subequations}
followed by the postprocessing 
\begin{equation}\label{sup:eq:post}
\psi:=  {g^{-1}(M_\Gamma\boldsymbol\varepsilon(\uu)+M_\Gamma z)}. 
\end{equation}
 {To show that \eqref{sup:eq:FOF} is well-posed, we rely on semigroup theory which requires hypotheses on the operator $A$ and the regularity of the data $F$.} It can be shown that $(AU,U)_\mathbb{H} = 0$ for every $U \in D(A)$ and that the operators $I \pm A:  D(A) \to \mathbb H$ are surjective. We omit the details of these two computations, but note that more information can be found in \cite{BrSaSa2017} by disregarding the acoustic fields. By classical theory of $C_0$-semigroups of operators, this implies that $A$ is the infinitesimal generator of a $\CC_0$-group of isometries in $\mathbb H$  {(see for example \cite[Chapter 1, Theorem 4.3]{Pazy1983} or \cite[Chapter 4, Theorems 4.3 and 5.1]{Showalter1977})}.

 {For a Banach space $X$, we introduce the space 
\[
W^1(X):= \{ f \in \mathcal C^0([0,T]; X) : \dot{f} \in L^1(0,T;X), \dot{f}(0) = f(0) = 0\}.
\] 
We require that $z\in W^1(L_0^2(\Gamma))$, so that $F\in W^1(\mathbb H)$.  The continuity of $F$ implies that $F$ is integrable, and so we have, in the language of \cite{Pazy1983}, a unique mild solution to \eqref{sup:eq:FOF} with the reduced regularity $U \in \mathcal C^0([0,T];\mathbb H)$.  Once we have that $U$ is continuous, we also have that $AU$ is continuous since the spatial operators do not affect the time regularity.  Now using that $F$ is continuous we have that $\dot{U}$ is continuous by \eqref{eq:A.2.b}, and therefore by \cite[Chapter 4, Theorem 2.4]{Pazy1983}, $U$ uniquely solves \eqref{sup:eq:FOF} with the full regularity stated in \eqref{eq:A.2.a}. We also have that $\dot U(0)=0$.

Furthermore, since $A$ is the the infinitesimal generator of a $\CC_0$-group of isometries in $\mathbb H$, we have the bound 
\[
\|U(t)\|_{\mathbb H} \lesssim \int_0^t \|F(\tau)\|_{\mathbb H} \; \mathrm d \tau \lesssim \int_0^t \|z(\tau)\|_\Gamma \; \mathrm d \tau \qquad t \in [0,T].
\]
We obtain a similar bound for $\dot{U}$ because we are requiring $\dot{F} \in L^1(0,T;H)$, and so $\dot{U} \in \mathcal C^0([0,T]; \mathbb H)$ is a mild solution to 
\begin{alignat*}{3}
\ddot{U}(t) &= A\dot{U}(t) + \dot{F}(t) \qquad t \in [0,T],\\
\dot{U}(0) &= 0.
\end{alignat*}
}
The above proves that
\[
\uu \in \mathcal C^1([0,T];\mathbf L_\rho^2(\Omega))\cap
\mathcal C^0([0,T];\mathbf H^1_D(\Omega)),
\qquad \uu(0)=0, \quad \dot\uu(0)=0,
\]
and by \eqref{sup:eq:post}, $\psi\in \mathcal C^0([0,T];H^1_G(\Omega))$,  {with the bounds}
\begin{alignat*}{4}
\|\uu(t)\|_{1,\Omega} &\lesssim \|U(t)\|_\mathbb{H} + \|\dot{U}(t)\|_\mathbb{H} \lesssim \int_0^t \|z(\tau)\|_\Gamma \; \mathrm d \tau + \int_0^t \|\dot{z}(\tau)\|_\Gamma \; \mathrm d \tau,\\
\|\psi(t)\|_{1,\Omega} &\lesssim \|\varepsilon(\uu)(t) \|_{\Omega} + \|z(t)\|_{\Gamma} \lesssim  \int_0^t \|\dot{z}(\tau)\|_\Gamma \; \mathrm d \tau.
\end{alignat*}
Note that we have used Korn's inequality to estimate $\|\bff u(t)\|_{1,\Omega}$ in terms of $\|\bff u(t)\|_\Omega+\|\boldsymbol\varepsilon(\uu)(t)\|_\Omega$.
We also have
\begin{equation}\label{sup:eq:A100}
\langle \dot\uu(t),\,\cdot\,\rangle_\rho
=-(\mathrm S(t)+\mathcal E\bff r(t),\boldsymbol\varepsilon(\,\cdot\,))_\Omega
\qquad \mbox{in $\HH^{-1}_D(\Omega)$,}
\end{equation}
which follows from using \eqref{sup:eq:FOFstA} and \eqref{sup:eq:FOFstE}. Since $\mathrm S+\mathcal E\bff r\in \mathcal C^1([0,T];L^2(\Omega;\matR_\symm))$, then \eqref{sup:eq:A100} implies that $\dot\uu\in \mathcal C^1([0,T];\HH^{-1}_D(\Omega))$ and
\begin{alignat*}{3}
\langle \ddot{\uu}(t), \ww \rangle_\rho &= -(\dot{\mathrm S}(t) + \EE \dot{\mathbf r}(t), \eps{\ww})_\Omega \\
&= - (\CC \eps{\uu(t)} + \EE \nabla \psi(t), \eps{\ww})_\Omega 
\qquad\forall \ww\in \HH^1_D(\Omega).
\end{alignat*}
This and \eqref{sup:eq:post}  {show that \eqref{sup:eq:FOFst} implies \eqref{eq:state}.  The reverse implication follows from integrating the second order form of the equation and defining the auxiliary operators and unknowns defined at the beginning of this section}.  {We finish the proof by remarking that in the statement of the theorem we have taken $z \in \mathcal Z =\{ z \in  H^1(0,T;L_0^2(\Gamma)): z(0) = 0\}$, but we only need to take $z$ in the weaker space $W^1(L_0^2(\Gamma))$.   The result still holds since $\mathcal Z$ is continuously embedded into $W^1(L_0^2(\Gamma))$, and we take $z$ in this stronger space as it allows us to take advantage of its additional structure}.

\subsection{The adjoint equation (Theorem \ref{thm:2})}
 
 {Now consider} \eqref{sup:eq:FOF} with $F(t):=(\bff f(T-t),0,\mathbf 0)$, and note that
\begin{equation}\label{sup:eq:A200}
\| F(t)\|_{\mathbb H}\lesssim \|\bff f(T-t)\|_\Omega,
\qquad
\| AF(t)\|_{\mathbb H}\lesssim \| \boldsymbol\varepsilon(\bff f)(T-t)\|_\Omega.
\end{equation}
This means that $\bff f\in \mathcal C([0,T];\HH^1_D(\Omega))$ implies $F\in \mathcal C([0,T];D(A))$ and \eqref{sup:eq:FOF} has a unique solution  {by \cite[Chapter 4, Corollary 2.6]{Pazy1983}}. We thus just need to prove that
\[
\bff p:=(\partial^{-1}\uu)(T-\cdot), \qquad 
\xi:= {g^{-1}(M_\Omega \boldsymbol\varepsilon(\bff p))}
\]
is the solution to \eqref{eq:adj}. This can be done quickly by first noticing that the differential equations gathered in \eqref{sup:eq:FOF} and the definitions of $\bff p$ and $\xi$, imply that for all $t\in [0,T]$,
\begin{alignat*}{6}
\ddot\pp(t) &=\rho^{-1} \mathrm{div}\,
	(\mathrm S(T-t)+\mathcal E\mathbf r(T-t))+\bff f(t),\\
\mathrm S(T-t)&=\mathcal C\boldsymbol\varepsilon(\pp(t)),\\
\bff r(T-t) &=\nabla \xi(t).
\end{alignat*}	
These equalities can be used to verify the second order differential equation \eqref{eq:adjC}, the Dirichlet condition \eqref{eq:adjE}, and the Neumann condition for the elastic stress \eqref{eq:adjF}. Moreover, $\xi= {g^{-1}(M_\Omega\boldsymbol\varepsilon(\bff p))}$ compiles the elliptic differential equation \eqref{eq:adjD}, the grounding condition \eqref{eq:adjGG}, and the Neumann condition for the electric displacement \eqref{eq:adjH}. 
 
With respect to the bounds, we first use \eqref{sup:eq:A200} to obtain estimates
\[
\| \uu(t)\|_\Omega+\|\mathrm S(t)\|_\Omega
\lesssim \int_{T-t}^T \|\bff f(\tau)\|_\Omega \mathrm d\tau.
\]
Since $\xi=M_\Omega\boldsymbol\varepsilon(\pp)$ and $\mathcal C\boldsymbol\varepsilon(\pp)=\mathrm S(T-\cdot)$, this provides a bound
\[
\| \xi(t)\|_{1,\Omega}
\lesssim \| \boldsymbol\varepsilon(\pp)(t)\|_\Omega
\lesssim \int_t^T \|\bff f(\tau)\|_\Omega \mathrm d\tau.
\]
Finally
\[
\| \pp(t)\|_\Omega
	 \le \int_t^T \| \uu(T-\tau)\|_\Omega \mathrm d\tau
	 \le (T-t) \int_t^T \|\bff f(\tau)\|_\Omega \mathrm d\tau, 
\]
and the proof of Theorem \ref{thm:2} is finished.

\end{document}